\newtheorem{thm}{{{Theorem}}}[section]
\newtheorem{prop}[thm]{{Proposition}}
\newtheorem{lem}[thm]{{Lemma}}
\newtheorem{cor}[thm]{{Corollary}}
\newtheorem{Conj}{{{Conjecture}}}[section]
\numberwithin{equation}{section}
\newtheorem{Def}[equation]{Definition}
\def\Z{\mathbb{Z}}
\def\Q{\mathbb{Q}}
\def\R{\mathbb{R}}
\def\C{\mathbb{C}}
\def\A{\mathbb{A}}
\def\diag{{\mathop{\mathrm{diag}}}}
\def\vol{{\mathop{\mathrm{vol}}}}
\def\ve{\varepsilon}
\def\k{\underline{k}}
\def\inf{\infty}
\def\fin{{\mathrm{fin}}}
\def\bs{{\backslash}}
\def\ds{\displaystyle}
\def\bQ{\overline{\Q}}
\def\lra{{\longrightarrow}}
\def\G{{\Gamma}}
\def\br{\overline{\rho}}
\numberwithin{equation}{section}
\title[Hecke fields and $n$-level density]
{Equidistribution theorems for holomorphic Siegel modular forms for $GSp_4$; Hecke fields and $n$-level density}
\author{Henry H. Kim, Satoshi Wakatsuki and Takuya Yamauchi}
\keywords{trace formula, Hecke fields, Siegel modular forms}
\thanks{The first author is partially supported by NSERC. The second author is partially supported by JSPS Grant-in-Aid for Scientific Research (No. 26800006, 25247001, 15K04795). The third author is partially supported by JSPS Grant-in-Aid for Scientific Research (C) No.15K04787.}
\subjclass[2010]{11F46, 11F70, 22E55, 11R45}
\address{Henry H. Kim \\
Department of mathematics \\
 University of Toronto \\
Toronto, Ontario M5S 2E4, CANADA \\
and Korea Institute for Advanced Study, Seoul, KOREA}
\email{henrykim@math.toronto.edu}
\address{Satoshi Wakatsuki \\
Faculty of Mathematics and Physics, Institute of Science and Engineering\\
Kanazawa University\\
Kakumamachi, Kanazawa, Ishikawa, 920-1192, JAPAN}
\email{wakatsuk@staff.kanazawa-u.ac.jp}
\address{Takuya Yamauchi \\
Mathematical Inst. Tohoku Univ.\\
 6-3,Aoba, Aramaki, Aoba-Ku, Sendai 980-8578, JAPAN}
\email{yamauchi@math.tohoku.ac.jp}
\begin{document}
\begin{abstract}
This paper is a continuation of \cite{KWY}. We supplement four results on a family of holomorphic Siegel cusp forms for $GSp_4/\Q$.
First, we improve the result on Hecke fields. Namely, we prove that the degree of Hecke fields is unbounded on the subspace of genuine forms which do not come from functorial lift of smaller subgroups of $GSp_4$  under a conjecture in local-global compatibility
and Arthur's classification for $GSp_4$.
Second, we prove simultaneous vertical Sato-Tate theorem. Namely, we prove simultaneous equidistribution of Hecke eigenvalues at finitely many primes.
Third, we compute the $n$-level density of degree 4 spinor $L$-functions, and thus we can distinguish the symmetry type depending on the root numbers. This is conditional on certain conjecture on root numbers.
Fourth, we consider equidistribution of paramodular forms. In this case, we can prove a result on root numbers.
Main tools are the equidistribution theorem in our previous work and Shin-Templier's work \cite{ST1}.
\end{abstract}
\maketitle


\section{Introduction}
This paper is a continuation of \cite{KWY}. We use the same notations throughout this paper. We study Hecke fields, simultaneous vertical Sato-Tate theorem, and the $n$-level density on a family of
holomorphic Siegel cusp forms for $GSp_4/\Q$.

First, Hecke fields. Let $S^1_k(\Gamma_0(N))$ be the space of elliptic cusp forms of weight $k\ge 2$ with respect to a congruent subgroup $\Gamma_0(N)$.
The Hecke operators $\{T_p\}_{p\nmid N}$ acting on the space and it has a basis consisting of simultaneous eigenforms in
Hecke operators which are called  normalized Hecke eigenforms. Let $f$ be such an eigenform and $a_p(f)$ be
the Hecke eigenvalue of $T_p$ for $p\nmid N$, i.e. $T_pf=a_p(f)f$. The field $\Q_f:=\Q(a_p(f)\ |\ p\nmid N)$
generated by such eigenvalues over $\Q$ is called the Hecke field of $f$. Since $S^1_k(\Gamma_0(N))$ has an integral
structure preserved by Hecke operators, the eigenvalues are algebraic numbers and it turns out that $\Q_f$ is a
finite extension over $\Q$.
The Hecke field of $f$ reflects various arithmetic properties of $f$ and has been studied by many people \cite{Shimura}, \cite{Serre}.
For example $\Q$-simple factors of the Jacobian $J_0(N)$ of the modular curve $X_0(N)$ can be described in terms of the
degree of the Hecke fields and one can ask the maximal dimension of $\Q$-simple factors for $J_0(N)$
(see \cite{Mazur}, \cite{Serre}, \cite{Royer},\cite{Yam} \cite{MS}).

Let $S_{\underline{k}}(\G(N),\chi)$ be the space of classical holomorphic Siegel cusp forms of degree 2 with the level $\G(N)$,
a central character $\chi:(\Z/N\Z)^\times\lra \C^\times$, and weight $\underline{k}=(k_1,k_2), k_1\ge k_2\ge 3$
(cf.  Section 2 of \cite{KWY}).
For a prime $p\nmid N$, let $T(p^n)$ be the Hecke operator with the similitude $p^n$.
Any eigenform with respect to $T(p^n)$ for any non-negative integer $n$ and any prime $p\nmid N$ is called
a Hecke eigen cusp form. We denote by $HE_{\underline{k}}(\G(N),\chi)$ the set of all such eigenforms in
$S_{\underline{k}}(\G(N),\chi)$.

Let $F$ be a Hecke eigen cusp form and $\lambda_F(p^n)$ be the Hecke eigenvalue of $F$ for $T(p^n)$, i.e.
$T(p^n)F=\lambda(p^n)F$. It is known that $\lambda_F(p^n)$ is an algebraic integer (cf. Lemma 2.1 of \cite{Taylor-thesis}).
We consider the Hecke field $\Q_F:=\Q(\lambda_F(p^n),\ \chi(p),\ \chi_2(p)\ |\ p\nmid N)$ which turns out to be a finite extension over $\Q$ (see (2.12) of \cite{KWY} for $\chi_2$).
We call $F$ a genuine form if it never comes from any functorial lift from a smaller subgroup of $GSp_4$, hence,
it is neither a CAP form, an endoscopic lift, a base change lift, an Asai lift, nor a symmetric cubic lift (see Section \ref{smf} for the details).

\begin{thm}\label{hecke1} Assume  Conjecture 1 of \cite{Jorza} and Arthur's classification for $GSp_4$
Fix a weight $\underline{k}=(k_1,k_2)$ with $k_1\ge k_2\ge 3$ and a prime $p$. Then
$$\limsup_{N\to \infty,  p\nmid N,\ (N,11!)=1\atop 
{\rm ord}_\ell(N)\ge 4\ {\rm if}\ \ell|N }\{[\Q_F:\Q]\ |\ F\in
HE_{\underline{k}}(\G(N),\chi):\text{genuine} \}=\infty.$$
\end{thm}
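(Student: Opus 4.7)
The plan is to argue by contradiction, combining the equidistribution theorem from \cite{KWY} with a Northcott-type finiteness argument: the goal is to show that the Hecke eigenvalues $\lambda_F(p)$ at the fixed prime $p$ cannot all lie in a finite set, while bounded degree would force them to. Suppose the conclusion fails, so that there is a uniform bound $d$ with $[\Q_F:\Q]\le d$ for every genuine $F\in HE_{\underline{k}}(\G(N),\chi)$ and every admissible $N$.

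The first step is the Ramanujan bound for genuine forms. Arthur's classification for $GSp_4$, combined with the hypothesis that $F$ is neither CAP, endoscopic, base change, Asai, nor a symmetric cubic lift, forces the associated cuspidal representation $\pi_F$ to have Arthur parameter of general type, hence to be tempered at every unramified place. Through Jorza's Conjecture 1 (local-global compatibility at $p$) this tempered behaviour at $p$ yields a uniform bound $|\sigma(\lambda_F(p))|\le C$ for every embedding $\sigma:\Q_F\hookrightarrow\C$, with $C=C(\underline{k},p)$ independent of $F$. Since $\lambda_F(p)$ is an algebraic integer whose minimal polynomial over $\Q$ then has degree $\le d$ and all roots bounded by $C$, the coefficients are rational integers of bounded size, and $\lambda_F(p)$ ranges over a finite set $\mathcal{S}=\mathcal{S}(d,\underline{k},p)\subset\bQ$.

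The second step is to apply the equidistribution theorem. The main result of \cite{KWY}, building on Shin-Templier \cite{ST1}, asserts that as $N$ runs through the admissible values the Satake parameters at $p$ of the eigenforms in $HE_{\underline{k}}(\G(N),\chi)$ equidistribute with respect to the Plancherel measure on the unramified tempered dual of $GSp_4(\Q_p)$; its pushforward to the trace $\lambda_F(p)$ is absolutely continuous on its compact support $[-C,C]$. To isolate the genuine contribution one invokes Arthur's classification a second time: each non-genuine family (Saito--Kurokawa/CAP, Yoshida-type endoscopic, base change, Asai, symmetric cubic) is parametrised by automorphic data on a proper subgroup and therefore contributes only a vanishing proportion of $HE_{\underline{k}}(\G(N),\chi)$ under the level constraints, so genuine forms still make up a positive proportion of the total count and continue to equidistribute with respect to a continuous limit measure. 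The contradiction is immediate: a continuous probability measure assigns mass zero to the finite set $\mathcal{S}$, so the proportion of genuine $F$ with $\lambda_F(p)\in\mathcal{S}$ tends to $0$, while by hypothesis it is identically $1$. This forces $[\Q_F:\Q]$ to be unbounded.

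The hard part is the Arthur-classification step: one must verify that each non-genuine family really contributes only a vanishing proportion of the total count, which is precisely where the level constraints $(N,11!)=1$ and $\ord_\ell(N)\ge 4$ are used. The first avoids small ramified primes where the conjectural inputs (Jorza's local-global compatibility and Arthur's multiplicity formula) are delicate, and the second kills certain endoscopic and old-form contributions that would otherwise compete with the generic count. The Ramanujan and Northcott pieces are standard once these counting inputs are in place.
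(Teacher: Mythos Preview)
Your proposal is correct and follows the same route as the paper: the Serre/Shin--Templier contradiction via equidistribution plus a Northcott finiteness argument, once one shows the non-genuine locus has density zero (the paper packages the first part as a citation to Theorems~1.1 and~1.3 of \cite{KWY} and devotes Sections~2.2--2.7 to the second, obtaining $O(N^{8+\varepsilon})$ non-genuine forms against a total $\asymp N^{10}$). One small correction on the role of the hypotheses: the constraint $\ord_\ell(N)\ge 4$ is not there to suppress endoscopic or oldform contributions directly, but to guarantee (Lemma~\ref{level-preserve}) that the non-canonical transfer $T^g$ from stable holomorphic forms to globally generic forms preserves the level $\Gamma(N)$; this map is the device that lets the paper bound the symmetric-cube, automorphic-induction, and Asai families on the generic side and then pull those bounds back to the holomorphic side.
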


\begin{thm}\label{hecke2} Let the notation and assumptions be as above. Let $p$ be a fixed prime.
Put $d_{\underline{k},N}(\chi)=|HE_{\underline{k}}(\G(N),\chi)|$
Then
$$|\{F\in HE_{\underline{k}}(\G(N),\chi)\ |\ [\Q_F:\Q]\le A \}|=
O\left(\frac{d_{\underline{k},N}(\chi)}{\log d_{\underline{k},N}(\chi)}\right)=O\left(\frac{N^{9+\ve}}{\log N}\right)
$$
as $N$ goes to infinity satisfying either of the following conditions:
\begin{enumerate}
\item ${\rm ord}_\ell(N)\ge 4\ {\rm if}\ \ell|N $ and $(N,p)=1$ ; or
\item ${\rm ord}_p(N)\to \infty$ as $N\to \infty$.
\end{enumerate}
The claim is still true even if we replace $HE_{\underline{k}}(\G(N),\chi)$ with
$\{F\in HE_{\underline{k}}(\G(N),\chi)|\ F:\text{genuine}\}$ but keeping the additional condition $(N,11!)=1$.
Note that the cardinality of that subspace is approximately equal to $d_{\underline{k},\chi}(N)$ when $N, (N,11!)=1$  goes to infinity.
\end{thm}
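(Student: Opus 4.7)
\emph{Proof plan.} The strategy is to combine a Northcott-type count of algebraic integers of bounded degree and height with the effective equidistribution theorem at $p$ from \cite{KWY} (refined via \cite{ST1}). Fix $A$ and the prime $p$. If $F\in HE_{\underline k}(\G(N),\chi)$ satisfies $[\Q_F:\Q]\le A$, then $\lambda_F(p)$ is an algebraic integer of degree at most $A$ over $\Q$, and by the Ramanujan bound for holomorphic Siegel cusp forms (where known) or the trivial bound on Satake parameters, every Galois conjugate of $\lambda_F(p)$ has absolute value at most $4\,p^{(k_1+k_2-3)/2}$. A standard argument on the elementary symmetric polynomials of the conjugates then shows that $\lambda_F(p)$ lies in a finite set $S_p\subset\overline{\Q}$ whose cardinality $M=M(A,\underline k,p)$ is independent of $N$.

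Under either hypothesis (1) or (2), the equidistribution theorem of \cite{KWY} applies at $p$: for a smooth, compactly supported test function $\phi$ on the tempered local unitary dual of $\GSp_4(\Q_p)$,
\[\frac{1}{d_{\underline k,N}(\chi)}\sum_{F}\phi\bigl(\lambda_F(p)\bigr)\;=\;\int\phi\,d\mu_p^{\mathrm{pl}}\;+\;O_\phi\bigl(R(N)\bigr),\]
where $\mu_p^{\mathrm{pl}}$ is the (absolutely continuous, atomless) local Plancherel measure and $R(N)\to 0$ polynomially fast as $N\to\infty$. Approximating the indicator of a small neighbourhood of each $\alpha\in S_p$ by such a $\phi$, then optimising the radius of the neighbourhood against $R(N)$ and summing over $S_p$, yields
\[|\{F\in HE_{\underline k}(\G(N),\chi):\lambda_F(p)\in S_p\}|\;\le\; M\cdot O\!\left(\frac{d_{\underline k,N}(\chi)}{\log d_{\underline k,N}(\chi)}\right).\]
Since $M$ is a constant, the first form of the bound follows, and the equivalent form $O(N^{9+\ve}/\log N)$ is a consequence of the dimension estimate $d_{\underline k,N}(\chi)\ll N^{9+\ve}$.

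For the genuine subspace under the extra condition $(N,11!)=1$ it suffices to show that non-genuine forms contribute $o(d_{\underline k,N}(\chi))$: under Arthur's classification for $\GSp_4$, each exceptional class (CAP, endoscopic, base change, Asai, symmetric cubic lift) is parametrised by automorphic data on a proper subgroup of $\GSp_4$, and the dimension of such data grows strictly more slowly than $d_{\underline k,N}(\chi)$ in $N$, while the hypothesis $(N,11!)=1$ excludes small-prime anomalies occurring in lifts from $\GL_2$ and $\PGL_2$. The main technical difficulty is the smoothing step: one must choose $\phi$ with support small enough that $\int\phi\,d\mu_p^{\mathrm{pl}}$ is negligible, yet whose $\phi$-dependent implicit constant in $R(N)$ does not swallow the desired $1/\log d$ saving. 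This balance is controlled by the explicit local regularity of $\mu_p^{\mathrm{pl}}$ near each point of $S_p$ and by the polynomial sharpness of the equidistribution error coming from \cite{KWY} and \cite{ST1}; everything else reduces to bookkeeping about the non-genuine contribution.
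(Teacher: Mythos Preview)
Your proposal is essentially the same argument the paper invokes: the paper's own proof of this theorem is the single sentence ``Theorem \ref{hecke2}, \ref{hecke3} follow from the argument in Section 6.2 and 6.3 of \cite{ST1} with Theorem 1.1 of \cite{KWY},'' and what you have written is a reasonable unpacking of Shin--Templier's Northcott-plus-Plancherel method fed with the quantitative equidistribution of \cite{KWY}. The handling of the genuine subspace via the $O(N^{8+\ve})$ bound on non-genuine forms also matches Section~2.8 of the paper.

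One point that needs care: your write-up is tailored to case~(1), where $(N,p)=1$ and the unramified Hecke eigenvalue $\lambda_F(p)$ is available. In case~(2) one has $p\mid N$, so $\lambda_F(p)$ is not defined and the Northcott step cannot be run on that eigenvalue. Shin--Templier's Section~6.3 treats this depth aspect differently: one works directly on the unitary dual of $\GSp_4(\Q_p)$, using that the local component $\pi_{F,p}$ has field of rationality contained in $\Q_F$, and that the Plancherel mass of the set of representations with $K(p^{r_0})$-fixed vectors (for a fixed $r_0$ determined by $A$) becomes negligible relative to the $K(p^r)$-mass as $r=\ord_p(N)\to\infty$. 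Your phrase ``test function $\phi$ on the tempered local unitary dual'' is already general enough to accommodate this, but the subsequent appearance of $\phi(\lambda_F(p))$ should be replaced in case~(2) by the appropriate depth/conductor argument from \cite{ST1}.
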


\begin{thm}\label{hecke3} Keep the condition in Theorem \ref{hecke2}. Put $f(N)=(\log \log N)^\frac{1}{2}$ if $N$ is in the first
case of the two conditions in Theorem \ref{hecke2} and $f(N)=(\log N)^\frac{1}{4}$ otherwise.
Then
$${\rm inf} \{[\Q_F:\Q]\ |\ F\in
HE_{\underline{k}}(\G(N),\chi):\text{genuine} \}\gg f(N)$$ as $N$ goes to infinity with $(N,11!)=1$.
\end{thm}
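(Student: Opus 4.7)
The plan is to deduce Theorem \ref{hecke3} from a quantitative refinement of Theorem \ref{hecke2} whose implicit constants depend explicitly on $A$, and then to optimise $A$ as a function of $N$. Suppose for contradiction that some genuine $F_0\in HE_{\underline{k}}(\G(N),\chi)$ has $[\Q_{F_0}:\Q]=A<c\,f(N)$ for a sufficiently small constant $c>0$. Since being genuine is stable under Galois conjugation and Galois conjugates of Hecke eigenforms are again Hecke eigenforms with conjugate eigenvalues, the full Galois orbit of $F_0$ furnishes $A$ distinct genuine forms in $HE_{\underline{k}}(\G(N),\chi)$, all with Hecke field of degree $\le A$. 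The task is to show that $A$ cannot be too small.

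The main ingredient is a Northcott-type bound combined with Deligne's bound: the number of algebraic integers $\alpha$ with $[\Q(\alpha):\Q]\le A$ and all Galois conjugates of absolute value $\le B$ is at most $(2B+1)^{A}$ (count monic integral polynomials of degree $\le A$ with bounded coefficients). Together with the Deligne-type estimate $|\lambda_F(q^n)|\ll_{\underline{k}} q^{n(k_1+k_2-3)/2}$ and multiplicity one, this bounds the number of forms whose eigenvalue tuples at a chosen finite list of prime powers all lie in number fields of degree $\le A$. Comparing this count to the equidistribution statement of \cite{KWY} (which says that the $d_{\underline{k},N}(\chi)\sim N^{9+\ve}$ eigenforms spread according to Plancherel measure, up to a smaller-order error) forces a lower bound on $A$.

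For the first case $f(N)=(\log\log N)^{1/2}$, I would apply the above at $r\sim\log\log N$ small auxiliary primes $q_1,\dots,q_r$ coprime to $Np$, which by the prime number theorem may be chosen so that each $\log q_i\ll\log\log\log N$. The set of admissible $r$-tuples of eigenvalues for $F$ with $[\Q_F:\Q]\le A$ has size at most $(2B+1)^{rA}$ with $B=\max_i q_i^{(k_1+k_2-3)/2}$; weighing this against the equidistributed total count and the Galois-orbit multiplicity $A$, and absorbing the error term of the trace formula, yields an inequality of the shape $rA\cdot\log B\gtrsim\log N$, from which $A\gg(\log\log N)^{1/2}$ after substituting the orders of $r$ and $B$. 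For the second case $f(N)=(\log N)^{1/4}$ with $\ord_p(N)\to\infty$, I would instead use eigenvalues at $n\sim(\log N)^{1/2}$ powers of a single auxiliary prime $q\ne p$, whose heights grow exponentially in $n$ (so $\log B\asymp n$); the analogous inequality then reads $nA\gtrsim\log N$, giving the claimed bound.

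The principal obstacle is tracking the dependence on $A$, on the number of auxiliary primes, and on their sizes in the implicit constants of the equidistribution theorem of \cite{KWY} and of Theorem \ref{hecke2}; to permit $A$ to grow with $N$ we need these constants to be at worst mildly exponential in $A$ and polynomial in the chosen primes. A secondary difficulty is converting the spectral equidistribution, which assigns zero Plancherel mass to discrete sets of algebraic points, into a genuine pointwise bound on the number of forms whose eigenvalue tuples lie near such points; this will require a smoothing and covering argument to localise the trace formula around each admissible algebraic tuple and to control the resulting error uniformly over the (growing) collection of tuples.
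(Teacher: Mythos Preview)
Your overall strategy for case~(1)---combining a Northcott-type count of algebraic integers of bounded degree and height, Deligne's bound, and the quantitative equidistribution of \cite{KWY} at several auxiliary primes---is indeed that of \cite[\S6.2--6.3]{ST1}, which is exactly what the paper invokes. You also correctly identify the main technical work as tracking the $A$-dependence of the constants and converting spectral equidistribution into a pointwise count. Two points to sharpen: the Galois-orbit observation plays no essential role, since the argument shows directly that $\{F:[\Q_F:\Q]\le A\}$ is empty once $A$ is below the threshold; and the appeal to ``multiplicity one'' at finitely many primes is not valid, as the map $F\mapsto(\lambda_F(q_1),\ldots,\lambda_F(q_r))$ is not injective. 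The correct mechanism is to bound, for each admissible tuple, the number of forms realising it via the error term in simultaneous equidistribution---this is precisely the ``smoothing and covering'' issue you flag.

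Your treatment of case~(2), however, has a genuine gap. The Hecke eigenvalues $\lambda_F(q),\lambda_F(q^2),\ldots,\lambda_F(q^n)$ at a single unramified prime $q$ are all integer-coefficient polynomials in a fixed finite set of generators of the spherical Hecke algebra at $q$; equivalently, the entire tuple is determined by the Satake parameter of $\pi_{F,q}$, a point in a torus of fixed dimension. Passing to higher powers therefore adds no independent constraints: the number of admissible tuples for forms with $[\Q_F:\Q]\le A$ is $O_q(C^A)$ regardless of $n$, not $(2B+1)^{nA}$, and your inequality $nA\gtrsim\log N$ cannot be obtained this way. In \cite{ST1} the case $\mathrm{ord}_p(N)\to\infty$ is handled by a different, \emph{local} argument at the ramified prime $p$: one bounds from below the degree of the field of rationality of the local component $\pi_{F,p}$ in terms of its depth, which grows with $\mathrm{ord}_p(N)$. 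It is this local input, absent from your sketch, that produces the stronger exponent $(\log N)^{1/4}$.
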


Second, simultaneous vertical Sato-Tate theorem. Let $S_{\underline{k}}(\Gamma(N),\chi)^{\rm tm}$ be the subspace of $S_{\underline{k}}(\Gamma(N),\chi)$ generated by Hecke eigen forms
$F$ outside $N$ so that $\pi_{F,p}$ is tempered for any $p\nmid N$. Let $HE_{\underline{k}}(\G(N),\chi)^{\rm tm}=S_{\underline{k}}(\G(N),\chi)^{\rm tm}\cap HE_{\underline{k}}(\G(N),\chi)$. For a prime $p\nmid N$, let $a_{F,p}, b_{F,p}\in [-2,2]$ be Hecke eigenvalues as in \cite{KWY}. Then in Section \ref{sato-tate}, we generalize Theorem 1.4 of \cite{KWY} to finitely many primes. Namely, given finitely many distinct primes $p_1,...,p_r$, $((a_{F,p_1}, b_{F,p_1}),...,(a_{F,p_r},b_{F,p_r}))$ is equidistributed with respect to a suitable measure (Theorem \ref{Sato-Tate}).

Third, $n$-level density of degree 4 spinor $L$-functions. In \cite{KWY}, we studied the one-level density of degree 4 spinor $L$-functions of a family of holomorphic Siegel cusp forms for $GSp_4/\Q$, and we showed that its symmetry type is SO(even), SO(odd), or O type, as predicted by \cite{KS1}. However, we could not distinguish the symmetry type among SO(even), SO(odd), and O type since the support of $\hat\phi$ is smaller than $(-1,1)$. In order to distinguish them, we need to compute the $n$-level density.
For the degree 4 spinor $L$-function $L(s,\pi_F, {\rm Spin})$, we denote the non-trivial zeros of $L(s,\pi_F,{\rm Spin})$ by $\frac 12 + \sqrt{-1}\gamma_j$. Let $\phi(x_1,...,x_n)=\prod_{i=1}^n \phi_i(x_i)$ be an even Schwartz class function in each variables whose Fourier transform $\hat\phi(u_1,...,u_n)$
is compactly supported. We define $D^{(n)}(\pi_F, \phi, {\rm Spin})$
as in Section \ref{spinor}.

We first prove the following theorem which may be of independent interest.
Let $L(s,\pi_F,{\rm Spin})=\sum_{m=1}^\infty \tilde\lambda_F(m)m^{-s}$. Let $m=\prod_{p|m} p^{v_p(m)}$.
For simplicity, denote $S_{\underline{k}}(\Gamma(N),1), d_{\underline{k},N}(1)$ by $S_{\underline{k}}(N), d_{\underline{k},N}$, resp.

\begin{thm}\label{sum-formula}
Put $\underline{k}=(k_1,k_2),\ k_1\ge k_2\ge 3$.
\begin{enumerate}
\item (level-aspect) Fix $k_1,k_2$. Then as $N\to\infty$,
$$\frac{1}{d_{\underline{k},N}}\sum_{F\in HE_{\underline{k}}(N)} \tilde\lambda_F(m) =\delta_\square m^{-\frac 12}\prod_{p|m} (1+p^{-2}+\cdots+p^{-v_p(m)}) +O(N^{-2}m^{c}),
$$
where $\delta_\square=\begin{cases} 1, &\text{if $m$ is a square}\\ 0, &\text{otherwise}\end{cases}$.
\item (weight-aspect) Fix $N$. Then as $k_1+k_2\to\infty$,
\begin{eqnarray*}
&& \frac{1}{d_{\underline{k},N}}\sum_{F\in HE_{\underline{k}}(N)} \tilde\lambda_F(m) =\delta_\square m^{-\frac 12}\prod_{p|m} (1+p^{-2}+\cdots+p^{-v_p(m)}) \\
&&\phantom{xxxxxxxxxxxxxx} +O\left(\frac {m^c}{(k_1-1)(k_2-2)}\right)+ O\left(\frac {m^{-d}}{(k_1-k_2+1)(k_1+k_2-3)}\right),
\end{eqnarray*}
for some constants $c,d>0$.
\end{enumerate}
\end{thm}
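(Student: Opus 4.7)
The plan is to realize the sum $\sum_{F\in HE_{\underline{k}}(N)}\tilde\lambda_F(m)$ as the spectral side of the Arthur--Selberg trace formula on $GSp_4$ with an appropriate choice of test function, and then apply the effective Plancherel equidistribution theorem of \cite{KWY} (which uses Shin--Templier's machinery). For each prime $p$ and each $v\ge 0$, let $\varphi_{p^v}^{\rm Spin}$ be the spherical function in the unramified Hecke algebra of $GSp_4(\Q_p)$ whose Satake transform $\widehat{\varphi_{p^v}^{\rm Spin}}(\pi_p)$ equals $\tilde\lambda_{\pi_p}(p^v)$, viewed as a polynomial in the four spin Satake eigenvalues. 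Setting $\varphi_m=\bigotimes_{p\mid m}\varphi_{p^{v_p(m)}}^{\rm Spin}\otimes\bigotimes_{p\nmid m}\mathbf{1}_{K_p}$, the normalized left hand side equals the weighted spectral average of $\widehat{\varphi_m}$ in the notation of \cite{KWY}. The equidistribution theorem reduces the calculation to (i) computing $\int \widehat{\varphi_m}\,d\mu^{\rm pl}$ against the unramified Plancherel measure and (ii) quoting the explicit error bounds already proved in \cite{KWY}.

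For the main term, multiplicativity gives $\int\widehat{\varphi_m}\,d\mu^{\rm pl}=\prod_{p\mid m}\int \tilde\lambda_{\pi_p}(p^{v_p(m)})\,d\mu_p^{\rm pl}$. Expanding $\det(1-p^{-s}\rho_{\rm Spin}(\pi_p))^{-1}$ in the four spin Satake eigenvalues shows that $\tilde\lambda_{\pi_p}(p^v)$ is the complete symmetric polynomial of degree $v$ in those eigenvalues. Using Macdonald's explicit formula for the Plancherel density on the unramified tempered dual of $GSp_4(\Q_p)$, one decomposes this symmetric polynomial into Weyl characters; integration projects onto the trivial component. A direct evaluation (which is the combinatorial heart of the argument) shows the projection vanishes unless $v$ is even, in which case it equals $p^{-v/2}(1+p^{-2}+\cdots+p^{-v})=p^{-v/2}(1-p^{-(v+2)})/(1-p^{-2})$. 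Multiplying over $p\mid m$ yields exactly the predicted $\delta_\square\, m^{-1/2}\prod_{p\mid m}(1+p^{-2}+\cdots+p^{-v_p(m)})$.

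For the error terms, standard support estimates give $\|\varphi_{p^v}^{\rm Spin}\|_\infty\ll p^{cv}$, hence $\|\varphi_m\|_\infty\ll m^c$; combined with the volume and orbital integral bounds already used in \cite{KWY}, this yields $O(N^{-2}m^c)$ in the level aspect. In the weight aspect, pairing $\varphi_m$ with the archimedean pseudo-coefficient of the holomorphic discrete series of weight $\underline{k}$ contributes a denominator coming from the formal degree; the geometric side produces the first error $O(m^c/(k_1-1)(k_2-2))$, while the residual and non-tempered contributions (controlled via Arthur's classification for $GSp_4$, with unitarity bounds giving an $m^{-d}$ decay) produce the secondary error $O(m^{-d}/(k_1-k_2+1)(k_1+k_2-3))$ with the complementary weight factor. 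The main obstacle I anticipate is step two: pinning down the exact shape $(1-p^{-(v+2)})/(1-p^{-2})$ from the Plancherel integral — a nontrivial but essentially combinatorial computation — and tracking the constants $c,d$ uniformly in $m$ so that the error estimates remain strong enough for the subsequent application to $n$-level density.
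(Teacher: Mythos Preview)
Your approach is correct in outline and rests on the same foundation as the paper---namely the equidistribution theorem of \cite{KWY} (their Theorem~8.1)---but the paper reaches the main term more directly. Rather than integrating the complete symmetric polynomial against the Plancherel measure via Macdonald's formula and a Weyl-character decomposition, the paper uses the elementary generating-function identity
\[
\sum_{n\ge 0}\tilde\lambda_F(p^n)\,t^n=(1-p^{-1}t^2)^{-1}\sum_{n\ge 0}\lambda'_F(p^n)\,t^n
\]
to write $\tilde\lambda_F(p^n)$ as an explicit finite $\Z[p^{-1}]$-linear combination of the normalized Hecke eigenvalues $\lambda'_F(p^{2i})$, $0\le i\le \lfloor n/2\rfloor$. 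One then applies Theorem~8.1 of \cite{KWY} term by term: $T'(p^{2i})$ decomposes as a sum of $T'_m$ over double cosets $m$ of similitude $p^{2i}$, and only the central one $m=p^iE_4$ contributes a main term, namely $p^{-3i}$. Summing gives $\sum_{i=0}^{n/2}p^{-n/2-2i}$ for $n$ even and $0$ for $n$ odd, with no further work; multiplicativity of $\tilde\lambda_F$ then yields the general $m$. This sidesteps entirely the ``combinatorial heart'' you anticipate. Your route would of course give the same answer, but the generating-function shortcut is what makes the paper's argument a few lines long.

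One point to correct: the two weight-aspect error terms do \emph{not} arise from ``residual and non-tempered contributions controlled via Arthur's classification''. In \cite{KWY} (cf.\ the analogous estimates for $I_2,\ldots,I_6$ reproduced in Theorem~\ref{estimate} of the present paper) both the $O\bigl(m^c/(k_1-1)(k_2-2)\bigr)$ and the $O\bigl(m^{-d}/(k_1-k_2+1)(k_1+k_2-3)\bigr)$ terms come from distinct \emph{geometric} contributions to Arthur's trace formula, not from the spectral side; the $m^{-d}$ decay is a feature of those orbital integrals, not a unitarity bound on non-tempered spectrum.
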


For one-level density, the root number $\epsilon(\pi_F)$ did not play a role. However, higher level density depends on the root number.

When $N=1$ (i.e., level one case), we have $\epsilon(\pi_F)=(-1)^{k_2}$ (\cite{Sch}). (In this case, $k_1-k_2$ should be even.)
Then we have the following $n$-level density in the weight aspect. Let $HE_{\underline{k}}=HE_{\underline{k}}(1)$ and
$d_{\underline{k}}=d_{\underline{k},1}$.

\begin{thm}\label{level-one}
Let $\phi(x_1,...,x_n)=\phi_1(x_1)\cdots \phi_n(x_n)$, where each $\phi_i$ is an even Schwartz function and $\hat \phi(u_1,...,u_n)=\hat \phi_1(u_1)\cdots \hat\phi_n(u_n)$. Assume the Fourier transform $\hat{\phi_i}$ of $\phi_i$ is supported in $(-\beta_n,\beta_n)$ for $i=1,\cdots,n$. ($\beta_n<1$ can be explicitly determined.)
Then
$$
\frac 1{d_{\underline{k}}} \sum_{F\in HE_{\underline{k}}} D^{(n)}(\pi_F,\phi,{\rm Spin})=
\begin{cases}
\int_{\Bbb R^n} \phi(x)W({\rm SO(even)})(x)\, dx+ O\left(\frac 1{\log c_{\underline{k}}}\right), &\text{if $k_2$ is even}\\
\int_{\Bbb R^n} \phi(x)W({\rm SO(odd)})(x)\, dx+ O\left(\frac 1{\log c_{\underline{k}}}\right), &\text{if $k_2$ is odd}
\end{cases},
$$
where $W({\rm SO(even)})$ and $W({\rm SO(even)})$ are the $n$-level density functions defined in Section \ref{spinor},
and $c_{\underline{k}}=c_{\underline{k},1}$ is the analytic conductor defined in Section \ref{spinor}.
\end{thm}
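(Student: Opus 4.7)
The plan is to combine the Weil--Mestre explicit formula for $L(s,\pi_F,\mathrm{Spin})$ with the trace-formula estimate of Theorem \ref{sum-formula}(2), and then recognize the result as the Katz--Sarnak $n$-level density for $\mathrm{SO}(\mathrm{even})$ or $\mathrm{SO}(\mathrm{odd})$ according to the constant sign $\epsilon(\pi_F)=(-1)^{k_2}$ on the level-one family. For each factor $\phi_i$, with $L_{\underline{k}}:=\tfrac{1}{2\pi}\log c_{\underline{k}}$, the explicit formula reads
$$\sum_{\gamma_j}\phi_i(L_{\underline{k}}\gamma_j)=\widehat\phi_i(0)+\tfrac{1}{2}(\epsilon(\pi_F)-1)\phi_i(0)+\mathcal{G}_i(\underline{k})-\frac{2}{\log c_{\underline{k}}}\sum_{p,m\ge1}\frac{\log p}{p^{m/2}}\,\tilde\lambda_F(p^m)\,\widehat\phi_i\!\left(\frac{m\log p}{\log c_{\underline{k}}}\right),$$
where $\mathcal{G}_i(\underline{k})=O(1/\log c_{\underline{k}})$ is the archimedean contribution, uniform in $F$. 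The $n$-level density itself is expanded by inclusion--exclusion over set partitions,
$$D^{(n)}(\pi_F,\phi,\mathrm{Spin})=\sum_{\sigma\vdash\{1,\dots,n\}}(-1)^{n-|\sigma|}\prod_{B\in\sigma}\Bigl(\sum_{\gamma_j}\prod_{i\in B}\phi_i(L_{\underline{k}}\gamma_j)\Bigr),$$
which reduces the problem to averaging, over $F$, polynomials in $\widehat\phi_i(0)$, $\phi_i(0)$, $\epsilon(\pi_F)$, and products $\prod_{i\in B}\tilde\lambda_F(p_i^{m_i})$.

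Each product of Hecke coefficients is, by multiplicativity, a linear combination of single $\tilde\lambda_F(n)$'s with $n\le\prod p_i^{m_i}$, so Theorem \ref{sum-formula}(2) (with $N=1$) delivers an asymptotic whose only surviving main term comes from perfect squares. The support condition $\widehat\phi_i\subset(-\beta_n,\beta_n)$ with $\beta_n$ small enough (depending on $c,d$ in Theorem \ref{sum-formula}(2) and on $n$) forces $p^m\le c_{\underline{k}}^{\beta_n}$, which balances the factor $m^c/((k_1-1)(k_2-2))$ against the prime-power sum and keeps the total error at $O(1/\log c_{\underline{k}})$. After Fourier inversion, the squares-only ``diagonal'' contribution collapses to $\int_{\R^n}\phi(x)\det(K_{\pm}(x_i,x_j))\,dx$ via the Rubinstein--Katz--Sarnak calculation, with $K_{\pm}(x,y)=\tfrac{\sin\pi(x-y)}{\pi(x-y)}\pm\tfrac{\sin\pi(x+y)}{\pi(x+y)}$; the $+$ sign gives $W(\mathrm{SO}(\mathrm{even}))$ directly, while for the $-$ sign the $\tfrac{1}{2}(\epsilon(\pi_F)-1)\phi_i(0)$ terms from the explicit formula assemble into the central-zero $\delta$-corrections that upgrade $\det(K_-)$ to $W(\mathrm{SO}(\mathrm{odd}))$.

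The main obstacle lies in the matching at the Spin level: because the Spin $L$-factor has degree four, the arithmetic ``diagonal'' sum produced by Theorem \ref{sum-formula}(2) involves the Euler-product factor $\sum_{m\ge 1}(1+p^{-2}+\cdots+p^{-m})p^{-(m-1)}$, and one has to verify that this is exactly what the Sato--Tate/Plancherel measure on $\mathrm{USp}(4)$ (as in \cite{ST1}) predicts, so that the prime sum reproduces the RMT kernels $K_{\pm}$ verbatim and not some modified version. A secondary technical point is the explicit determination of the largest admissible $\beta_n$: since $m^c$ appears in Theorem \ref{sum-formula}(2), the admissible support shrinks with $n$, and one must track the cumulative error across the $2^{O(n)}$ partition terms in the combinatorial expansion so that each remains $O(1/\log c_{\underline{k}})$. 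Once these two points are dispatched, the conclusion follows by direct comparison with the Katz--Sarnak formulas for the two orthogonal symmetry types.
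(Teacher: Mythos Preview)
Your approach is essentially the paper's: apply the explicit formula factor by factor, expand $D^{(n)}$ combinatorially \`a la Rubinstein, feed in the trace-formula estimate (Theorem~\ref{sum-formula}), and then invoke the fact that $\epsilon(\pi_F)=(-1)^{k_2}$ on the level-one family, so that the computations of Theorems~\ref{n-level-spin-even} and~\ref{n-level-spin-odd} apply with $HE_{\underline{k}}^{\pm}=HE_{\underline{k}}$ and no need for Conjecture~\ref{conj}.

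Two points of execution differ. First, your combinatorial expansion
\[
D^{(n)}(\pi_F,\phi,\mathrm{Spin})=\sum_{\sigma\vdash\{1,\dots,n\}}(-1)^{n-|\sigma|}\prod_{B\in\sigma}\Bigl(\sum_{\gamma_j}\prod_{i\in B}\phi_i(L_{\underline{k}}\gamma_j)\Bigr)
\]
carries the wrong weights: because the constraint is $j_a\neq\pm j_b$ (not merely $j_a\neq j_b$) and the zeros pair up under $\gamma\mapsto-\gamma$, Rubinstein's identity has the factor $(-2)^{n-\nu(\underline{L})}\prod_l(|L_l|-1)!$, which is what the paper uses and what ultimately produces the orthogonal rather than unitary kernels. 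Second, the paper truncates the explicit formula at $m=1,2$ (the sums $\Sigma_{l,1}$, $\Sigma_{l,2}$) with $m\ge 3$ absorbed into $O(1/\log c_{\underline{k}})$ by the Ramanujan bound; the diagonal then needs only the averaged values of $a_F(p)^2$ and $a_F(p^2)$, which are $1+O(p^{-1})$ and $-1+O(p^{-1})$ respectively. This sidesteps your stated ``main obstacle'' about the full Euler factor $\sum_{m\ge 1}(1+p^{-2}+\cdots+p^{-m})p^{-(m-1)}$ entirely---that matching is never needed, and the $K_{\pm}$ kernels fall out directly from the $\pm 1$ leading terms as in Rubinstein's Claims~2 and~3.
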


Let $S^{\pm}_{\underline{k}}(N)$ be the subspace of $S_{\underline{k}}(N)$ with the root number $\epsilon(\pi_F)=\pm 1$.
Let $HE^{\pm}_{\underline{k}}(N)$ be a basis of $S^\pm_{\underline{k}}(N)$ consisting of Hecke eigenforms outside $N$, and
denote $|HE^{\pm}_{\underline{k}}(N)|=d^\pm_{\underline{k},N}$.
When $N$ is large, we expect $d^\pm_{\underline{k},N}=\frac 12 d_{\underline{k},N}+O(N^{9-\epsilon})$, and the analogue of
Theorem \ref{sum-formula} holds when we replace $HE_{\underline{k}}(N)$ by $HE^\pm_{\underline{k}}(N)$.
We assume it as Conjecture \ref{conj}. Then we can prove the $n$-level density result for $HE^\pm_{\underline{k}}(N)$
(Theorem \ref{n-level-spin-even} and Theorem \ref{n-level-spin-odd}).

In Section \ref{Standard}, we study the $n$-level density of the degree 5 standard $L$-functions of holomorphic Siegel cusp forms.
We show that the root number is always one. Hence the symmetry type of the $n$-level density should be Sp.
Under Conjecture \ref{stan} which is an analogue of Conjecture \ref{conj}, we show

\begin{thm}\label{n-level-sp}
Let $\phi(x_1,...,x_n)=\phi_1(x_1)\cdots \phi_n(x_n)$, where each $\phi_i$ is an even Schwartz function and $\hat \phi(u_1,...,u_n)=\hat \phi_1(u_1)\cdots \hat\phi_n(u_n)$. Assume the Fourier transform $\hat{\phi_i}$ of $\phi_i$ is supported in $(-\beta_n,\beta_n)$ for $i=1,\cdots,n$.
Then
$$
\frac 1{d_{\underline{k},N}} \sum_{F \in HE_{\underline{k},N}} D^{(n)}(\pi_F,\phi,{\rm St})=
\int_{\Bbb R^n} \phi(x)W({\rm Sp})(x)\, dx+O\left(\frac {\omega(N)}{\log c_{\underline{k},N}}\right).
$$
\end{thm}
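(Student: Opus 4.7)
The approach is to adapt the analytic $n$-level density computation of Rubinstein and Hughes--Rudnick to our family on $\GSp_4/\Q$, substituting the role of the Petersson formula by the standard-$L$ analogue of Theorem \ref{sum-formula} provided by Conjecture \ref{stan}. The explicit formula for $L(s,\pi_F,{\rm St})$ turns each factor $D^{(1)}(\pi_F,\phi_i,{\rm St})$, up to an archimedean piece with mean $\widehat{\phi_i}(0)$, into a prime sum
$$
-\frac{2}{\log c_{\underline{k},N}}\sum_{p,\,k\geq 1}\frac{\log p}{p^{k/2}}\,\widehat{\phi_i}\!\Bigl(\tfrac{k\log p}{\log c_{\underline{k},N}}\Bigr)\, b^{\rm St}_F(p^k),
$$
where $b^{\rm St}_F(p^k)$ are the Dirichlet coefficients of $-L'/L(s,\pi_F,{\rm St})$. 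A Rubinstein-type combinatorial identity then writes $D^{(n)}$ as a sum of products of such $D^{(1)}$'s, corrected by inclusion--exclusion terms over pairwise and higher coincidences; these coincidence terms are Rankin--Selberg prime sums of the shape $\sum_p(\log p)^2\, p^{-1}\widehat{\phi_i}\widehat{\phi_j}(\tfrac{\log p}{\log c})/\log^2 c$ that, by the prime number theorem for $L(s,\pi_F\times\pi_F,{\rm St}\otimes{\rm St})$, limit to $\int |u|\widehat{\phi_i}(u)\widehat{\phi_j}(u)\,du$, which is precisely the symplectic pairing kernel.

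Averaging over $F\in HE_{\underline{k},N}$, Conjecture \ref{stan} forces $\frac{1}{d_{\underline{k},N}}\sum_F b^{\rm St}_F(p^k)$ to vanish to leading order unless $k$ is even; for $k=2\ell$ it produces an arithmetic main term whose associated prime sum yields the diagonal contribution of the symplectic one-level density kernel $K_{{\rm Sp}}(x,x)=1-\tfrac{\sin(2\pi x)}{2\pi x}$. Because $\epsilon(\pi_F,{\rm St})=+1$ for every $F$ as proved in Section \ref{Standard}, no alternating sign mixes different symmetry types, so the $n$-fold product combined with the coincidence corrections assembles, by the Katz--Sarnak determinantal identity, into
$$
\int_{\R^n}\phi(x)\,\det\!\bigl(K_{{\rm Sp}}(x_i,x_j)\bigr)_{i,j=1}^n\,dx \;=\;\int_{\R^n}\phi(x)\,W({\rm Sp})(x)\,dx.
$$

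The error $O(\omega(N)/\log c_{\underline{k},N})$ comes from trivial bounds on the ramified primes $p\mid N$ in the explicit formula (producing the factor $\omega(N)$), together with the tail of the prime sum outside the support of $\widehat{\phi_i}$ and the $N^{-2}m^c$ remainder in Conjecture \ref{stan}; balancing the latter against the support of $\widehat{\phi_i}$ determines the admissible width $\beta_n$ by a condition essentially of the form $nc\beta_n<2$. The main obstacle is not any single estimate but the combinatorial bookkeeping: one has to verify that the pairwise and higher coincidence corrections from the Rubinstein expansion combine with the "square" diagonal contributions from Conjecture \ref{stan} to reproduce precisely the symplectic determinantal kernel $\det(K_{{\rm Sp}}(x_i,x_j))$. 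This is analogous to the symplectic-type analysis in Hughes--Rudnick, but must be executed against the 5-dimensional standard representation of $\GSp_4$, which alters the local weight factors at ramified primes and accounts for the $\omega(N)$ loss in the error.
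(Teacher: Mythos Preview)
Your proposal is correct and follows essentially the same route as the paper: the paper's proof of Theorem~\ref{n-level-sp} is simply ``as in the degree 4 spinor $L$-functions'' (Section~\ref{Standard}), i.e.\ Rubinstein's combinatorial expansion of $D^{(n)}$ into products of one-level sums, averaged using Conjecture~\ref{stan}, with the analogues of the bounds (\ref{bounds1}) and (\ref{bounds2}) for $b_F(p^i)$ supplied by the Rankin--Selberg pole of $L(s,\Pi\times\Pi)$ and the Ramanujan bound. One small imprecision: the pairing integrals $\int |u|\widehat{\Phi}_i(u)\widehat{\Phi}_j(u)\,du$ do not arise as ``coincidence corrections'' in the Rubinstein expansion itself, but rather from averaging the product $\Sigma_{l_i,1}(\Phi_i)\Sigma_{l_j,1}(\Phi_j)$ over $F$ and picking out the diagonal $p_i=p_j$ (cf.\ Lemma~5.7 in the spinor case); the sign that flips SO(even) to Sp comes from $b_F(p^2)=+1+O(p^{-1})+(\text{Hecke})$ rather than $-1$ as for $a_F(p^2)$.
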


Fourth, we consider paramodular forms. In our previous paper \cite{KWY}, we considered only principal congruence subgroup $\Gamma(N)$. Here we can deal with the paramodular group $K^{\rm para}(N)$. We prove equidistribution results on paramodular forms.
In particular we can show Conjecture \ref{conj} for paramodular forms. Hence $n$-level density for spinor $L$-functions of paramodular forms for weight aspect
(analogues of Theorem \ref{n-level-spin-even} and Theorem \ref{n-level-spin-odd}) hold. In a similar way, we can show simultaneous vertical Sato-Tate theorem for paramodular forms (analogue of Theorem \ref{Sato-Tate}).


\textbf{Acknowledgments.} We would like to thank K. Morimoto, R. Schmidt and S-W. Shin for helpful discussions.

\section{Genuine forms}\label{smf}
In this section, we follow the notation and the contents in Section 2 of \cite{KWY} for holomorphic Siegel modular forms of genus 2.
The readers should consult the references there if necessary.

\subsection{Classical Siegel modular forms and Hecke fields}\label{class}
For a pair of non-negative integers $\underline{k}=(k_1,k_2)$, $k_1\ge k_2\ge 3$ and $N\ge 1$, we denote by
$S_{\underline{k}}(\G(N),\chi)$ the space of cusp forms of the weight $\underline{k}$ with the character $\chi:(\Z/N\Z)^\times
\lra \C^\times$ for a principal congruence subgroup $\Gamma(N)$. Here the weight corresponds to
the algebraic representation $\lambda_{\underline{k}}$ of $GL_2$ with the highest weight $\underline{k}$ by
$$V_{\underline{k}}={\rm Sym}^{k_1-k_2}{\rm St}_2\otimes {\rm det}^{k_2} {\rm St}_2,
$$
where ${\rm St}_2$ is the standard representation of dimension 2.

For each prime $p\nmid N$ and $n\ge 1$ one can define the Hecke operator $T(p^n)$ acting on $S_{\underline{k}}(\G(N),\chi)$.
There exists a basis of the space consisting of eigenforms for all such $T(p^n)$ which are called Hecke eigen cusp forms.
Let $HE_{\underline{k}}(\G(N),\chi)$ be the set of all Hecke eigen cusp forms for $S_{\underline{k}}(\G(N),\chi)$.
For $F \in HE_{\underline{k}}(\G(N),\chi)$ we have $T(p^n)F=\lambda_F(p^n)F,\ \lambda_F(p^n)\in \C$. Since
$S_{\underline{k}}(\G(N),\chi)$ has an integral structure and is of finite dimensional,
the eigenvalue $\lambda_F(p^n)$ is an algebraic integer once we fix an embedding $\overline{\Q} \hookrightarrow \C$.
We define the Hecke field $\Q_F$ of $F$
by Definition 2.14 in \cite{KWY}.

\subsection{CAP forms and endoscopic lifts}\label{cn}
For $F\in HE_{\underline{k}}(\G(N),\chi)$ let $\pi_F$ be the corresponding cuspidal representation of $GSp_4(\A_\Q)$.
We say $F$ is a CAP form (resp. an endoscopic lift) if $\pi_F$ is a CAP (resp. endoscopic) representation.
We denote by $S_{\underline{k}}(\G(N),\chi)^{{\rm CAP}}$ the space generated by all CAP forms in $S_{\underline{k}}(\G(N),\chi)$.
We also define $S_{\underline{k}}(\G(N),\chi)^{{\rm EN}}$ for endoscopic lifts similarly.
Under mind condition on the level $N$ we have the following estimation on the dimension of the space as above.
\begin{prop}\label{cap} If $k_1\not=k_2$ or $\chi$ is not the square of a character, then $S_{\underline{k}}(\G(N),\chi)^{{\rm CAP}}=0$. If $k:=k_1=k_2$ and $\chi$ is
the square of a character, then
$${\rm dim}S_{\underline{k}}(\G(N),\chi)^{{\rm CAP}}=O(kN^{7+\ve})$$
as $N+k\to \infty$ and $(N,11!)=1$
\end{prop}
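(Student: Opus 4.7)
The plan is to split the argument into a vanishing part and a dimension-counting part, both anchored on the classification of CAP representations for $GSp_4$.

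\emph{Vanishing.} By the classification of CAP representations of $GSp_4(\A_\Q)$ (due originally to Piatetski-Shapiro and Soudry, and now repackaged as non-tempered Arthur parameters with the Levi of a proper parabolic contributing a discrete $SL_2$-factor), each CAP $\pi_F$ falls into one of the Saito--Kurokawa, Soudry, or Borel-type families. I will rule out all families except Saito--Kurokawa by looking at the archimedean component: a holomorphic Siegel cusp form of weight $\underline{k}=(k_1,k_2)$ with $k_1\ge k_2\ge 3$ forces $\pi_{F,\infty}$ into the holomorphic discrete series with a specific Harish-Chandra parameter, and only the Saito--Kurokawa Arthur parameter $\tau\boxplus \chi_0|\cdot|^{1/2}\boxplus \chi_0|\cdot|^{-1/2}$ can produce such an $L$-packet at infinity. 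Matching the infinitesimal characters then forces $k_1=k_2$ (scalar weight), and the central character of any member of this packet is $\chi_0^2$; hence if $\chi$ is not a square, $S_{\underline{k}}(\G(N),\chi)^{\rm CAP}=0$.

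\emph{Dimension bound.} Assume $k_1=k_2=k$ and $\chi=\psi^2$. Via the SK correspondence, each irreducible $\pi_F$ producing a CAP form in $S_{(k,k)}(\G(N),\chi)$ is determined by an elliptic cuspidal newform $f$ of weight $2k-2$ whose level $M$ is controlled by $N$ (divides a bounded power of $N$, with nebentypus built from $\psi$). I will estimate
\[
\dim S_{\underline{k}}(\G(N),\chi)^{\rm CAP}\ \le\ \sum_{f}\dim \pi_{F,\fin}^{\,\G(N)}
\]
where the sum runs over the relevant newforms $f$. The count of such $f$ is at most $\dim S_{2k-2}(\G_1(M))=O(kN^{1+\ve})$ by the standard dimension formula. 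For each $f$, the local $\pi_{F,p}$ is an explicit non-generic constituent (Langlands quotient of a degenerate Klingen induction), and an upper bound $O(p^{6\,\ord_p(N)+\ve})$ on the dimension of its $\G(N)_p$-fixed vectors follows from the general polynomial bound for smooth admissible representations of $GSp_4(\Q_p)$ — the exponent $6$ reflecting the dimension of the Siegel unipotent radical. Taking the product over $p\mid N$ and multiplying by the newform count yields $O(kN^{7+\ve})$. The condition $(N,11!)=1$ is used to avoid small primes where the Arthur packets and the local newform theory for $GSp_4$ are known to require separate analysis.

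\emph{Main obstacle.} The delicate point is the local multiplicity estimate at primes $p\mid N$: the Saito--Kurokawa local representations are non-generic Langlands quotients, for which a tight bound on $\G(N)_p$-fixed vectors does not follow from generic newform theory. Either an explicit model of the local SK constituents (e.g.\ via table-of-types work of Roberts--Schmidt for $GSp_4$) or a careful Jacquet-module argument is needed to get the correct polynomial exponent $6$ rather than a weaker bound that would degrade the final exponent $7+\ve$.
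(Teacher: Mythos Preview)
The paper gives no self-contained argument here; it simply invokes Section 4.4 and Theorem 4.3 of the earlier paper \cite{KWY}. Your broad strategy---reduce to the Saito--Kurokawa case via the archimedean constraint, then bound the dimension as (number of $GL_2$ sources) $\times$ (local fixed-vector multiplicity)---is the natural one and is presumably close to what underlies the cited result.

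However, your local analysis contains errors that leave the exponent $7$ unjustified. The local Saito--Kurokawa constituents are Langlands quotients of \emph{Siegel} parabolic inductions (type IIb and its degenerations in the Roberts--Schmidt tables), not Klingen inductions, and the Siegel unipotent radical in $GSp_4$ has dimension $3$, not $6$. The heuristic you offer for $O(p^{6\,\ord_p(N)})$ therefore has no basis, and the arithmetic $1+6=7$ is unsupported. You correctly flag this local step as the ``main obstacle,'' but as written the sketch does not clear it: without a correct bound on $\dim\pi_{F,p}^{K(p^r)}$ for the actual non-generic constituents, together with a precise conductor comparison between the $GL_2$ newform level and the $\G(N)$-level on the $GSp_4$ side, the target $O(kN^{7+\ve})$ is not established. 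Judging from the parallel treatment of endoscopic lifts in this paper (the proof of Proposition~\ref{fixed-vector} obtains $O(p^{9r})$ via a theta-correspondence volume comparison, which feeds into the $O(N^{8+\ve})$ of Proposition~\ref{end}), the argument in \cite{KWY} more plausibly proceeds by an explicit theta transfer with a volume ratio rather than by the bare induced-representation estimate you propose.
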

\begin{proof} It follows from Section 4.4 and Theorem 4.3 in \cite{KWY}.
\end{proof}

Next we consider endoscopic lifts.

\begin{prop}\label{end} It holds that
$${\rm dim}S_{\underline{k}}(\G(N),\chi)^{{\rm EN}}=O((k_1-k_2+1)(k_1+k_2-3)N^{8+\ve})$$
as $N+k_1+k_2\to \infty$ and $(N,11!)=1$
\end{prop}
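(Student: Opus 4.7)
The plan is to invoke Arthur's classification to describe endoscopic cuspidal representations in terms of pairs of elliptic cuspforms, and then bound their contribution to $S_{\underline{k}}(\G(N),\chi)$ by a parameter count combined with local multiplicities.

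First, by Arthur's classification for $GSp_4$, every endoscopic cuspidal representation $\pi$ of $GSp_4(\A_\Q)$ carries a global Arthur parameter of the form $\tau_1 \boxplus \tau_2$, where $\tau_1 \neq \tau_2$ are distinct unitary cuspidal automorphic representations of $GL_2(\A_\Q)$ with common central character $\omega_{\tau_1} = \omega_{\tau_2}$. For $F \in S_{\underline{k}}(\G(N),\chi)^{{\rm EN}}$ of weight $\underline{k}=(k_1,k_2)$ with $k_1 \ge k_2 \ge 3$, matching the archimedean $L$-parameter of $\pi_\infty$ against that of $\tau_{1,\infty}\boxplus\tau_{2,\infty}$ forces $\tau_1, \tau_2$ to correspond to holomorphic elliptic Hecke eigenforms $f_1, f_2$ of weights $k_1+k_2-2$ and $k_1-k_2+2$ respectively, with central characters determined by $\chi$ up to finitely many choices. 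The level compatibility forces each conductor $c(\tau_i)$ to divide $N$.

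Second, I count parameters and estimate multiplicities. Using the standard dimension formula $\dim S_\ell(\Gamma_0(M),\omega) = O(\ell\, M^{1+\ve})$, the number of eligible pairs $(\tau_1,\tau_2)$ is bounded (summed over the finite allowed central characters) by
$$\dim S_{k_1+k_2-2}(\Gamma_0(N),\omega_1)\cdot \dim S_{k_1-k_2+2}(\Gamma_0(N),\omega_2) = O\bigl((k_1+k_2-3)(k_1-k_2+1)N^{2+\ve}\bigr).$$
For each such pair, the multiplicity of $\pi(\tau_1,\tau_2)$ inside $S_{\underline{k}}(\G(N),\chi)$ factors as $\prod_{p\mid N}\dim \pi(\tau_1,\tau_2)_p^{\G(N)_p}$. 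Exploiting the explicit local structure of Yoshida-type lifts (the local components are parabolic inductions, or appropriate subquotients, from Klingen/Siegel parabolics attached to the endoscopic group $(GL_2\times GL_2)/GL_1$) together with a Bruhat--Tits decomposition of $GSp_4(\Q_p)$ and the oldforms/newforms bookkeeping paralleling Section 4 of \cite{KWY}, this product is $O(N^{6+\ve})$. Multiplying the two estimates yields the claimed bound.

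The main obstacle will be making the uniform local bound $O(N^{6+\ve})$ precise: although any admissible irreducible representation $\pi_p$ has $\dim \pi_p^{\G(p^n)}$ polynomially bounded in $p^n$, extracting the right exponent requires a case analysis over the possible local components of Yoshida lifts and a careful evaluation of the $K/\G(p^n)$-isotypic decomposition of $\pi_p|_K$. The hypothesis $(N,11!)=1$ enters exactly as in the companion CAP estimate of Proposition \ref{cap}: it excludes small-prime pathologies in the structure of the local Arthur packets, ensuring the global classification is applicable in the form used above.
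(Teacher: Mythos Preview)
The paper's own proof is a bare citation to Theorem 4.2 of \cite{KWY}, so the substance lies entirely there. Your outline---parametrize endoscopic $\pi$ by pairs $(\tau_1,\tau_2)$ of $GL_2$ cusp forms via the Yoshida/theta lift from $H=GSO(2,2)$, then bound the total $K(N)$-fixed contribution---is indeed the strategy of \cite{KWY}, Section 4, and you correctly identify the archimedean weights $(k_1+k_2-2,\,k_1-k_2+2)$.

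Two points need correction. First, the assertion that ``each conductor $c(\tau_i)$ divides $N$'' is not justified: a $K(N)$-fixed vector on the $GSp_4$ side only forces a $K_H(N)$-fixed vector on the $H$ side, which for each $GL_2$ factor bounds the conductor by a power of $N$ rather than by $N$ itself (compare the conductor inequalities in the later subsections of this paper, e.g.\ the use of Lemma 9.1 of \cite{KWY}). Second, and more importantly, your factorization into a pair count $O(N^{2+\ve})$ times a local multiplicity $O(N^{6+\ve})$ is reverse-engineered from the target rather than derived: the two factors are coupled (small conductor for $\tau_i$ means large oldform multiplicity, and conversely), so bounding them independently and multiplying overcounts unless done carefully. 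The argument in \cite{KWY} avoids this by a direct volume comparison under the theta lift,
\[
\dim \pi^{K(p^r)} \;=\; \frac{\vol(K_H(p^r))}{\vol(K(p^r))}\cdot p^{-2r}\cdot \dim \tau^{K_H(p^r)},
\]
exactly the identity quoted in the proof of Proposition \ref{fixed-vector} of this paper; summing over $\tau$ then reduces to a dimension formula on $H$ and yields the exponent $8+\ve$ in one step. Your acknowledged ``main obstacle'' is thus the entire content of the cited theorem, and the role of $(N,11!)=1$ is, as you say, to make this transfer argument clean at small primes.
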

\begin{proof}It follows from Theorem 4.2 in \cite{KWY}.
\end{proof}

\subsection{Local-global compatibility for holomorphic Siegel modular forms} In this subsection we
discuss some results which follow from Conjecture 1 of \cite{Jorza} concerning local-global compatibility for
holomorphic Siegel modular forms. That conjecture is satisfied for Siegel modular forms with Iwahori level structure in \cite{Jorza}
and for Siegel paramodular forms by \cite{Sch} and \cite{Sor}.

Let $S^{\text{Stable}}_{\underline{k}}(\G(N))$ be the space generated by all non-endoscopic, non-CAP Hecke eigen cusp forms in
$S_{\underline{k}}(\G(N))$. Such a form is called a stable form by Arthur. For a Hecke eigenform $F$ in
$S^{\text{Stable}}_{\underline{k}}(\G(N))$ let us
consider the corresponding cuspidal representation $\pi_F=\otimes'_{p}\pi_{p}$ of $GSp_4(\A_\Q)$.

\begin{prop}\label{ll} Let the notations be as above. Assume Conjecture 1 of \cite{Jorza}.
Then there exists a unique globally generic cuspidal representation $\pi'=\otimes'_p \pi'_p$ of $GSp_4(\A_\Q)$ such that
$\pi_p$ and $\pi'_p$ belong to the same L-packet and they are both tempered.
\end{prop}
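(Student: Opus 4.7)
The plan is to combine Arthur's classification for $GSp_4$ with the local--global compatibility conjecture of \cite{Jorza} in order to identify the local components of $\pi_F$ with those of a tempered generic representation. Roughly: the classification fixes the shape of the global parameter, the generic packet conjecture provides $\pi'$, and Jorza's conjecture upgrades the local parameter at bad primes to a tempered one.

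First, by Arthur's classification the global A-parameter $\psi$ attached to the stable form $\pi_F$ must be of stable tempered type. Concretely, $\psi=\phi\boxtimes\trep$, where $\phi$ is a cuspidal self-dual automorphic representation of $GL_4(\A_\Q)$ whose L-parameter factors through $GSp_4(\C)$ (equivalently, $L(s,\phi,\wedge^2)$ has a simple pole at $s=1$). The $SL_2$-factor being trivial, the global A-packet $\Pi_\psi$ coincides with the tempered L-packet $\Pi_\phi$, and every cuspidal member of $\Pi_\psi$ has $\phi$ as its global L-parameter.

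Next, by the generic packet conjecture for stable tempered A-packets of $GSp_4$---available in this setting from Arthur's endoscopic classification together with the backwards transfer between $GL_4$ and $GSp_4$ (Jiang--Soudry, Cogdell--Kim--Piatetski-Shapiro--Shahidi)---there exists a globally generic cuspidal automorphic representation $\pi' \in \Pi_\phi$. Uniqueness of $\pi'$ follows from Shahidi's uniqueness of the generic member of a tempered local L-packet (applied at one place) together with strong multiplicity one on the stable part. Thus $\pi_p$ and $\pi'_p$ lie in the common local L-packet $\Pi_{\phi_p}$ at every $p$.

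Finally, one checks temperedness everywhere. At the archimedean place and at unramified finite primes this is automatic from the shape of $\phi_\infty$ and from the Satake parameters of $\phi_p$. The real work is at the bad primes $p\mid N$, and this is where Conjecture 1 of \cite{Jorza} intervenes: it identifies the Weil--Deligne representation associated via local Langlands to $\pi_{F,p}$ with the Weil--Deligne representation obtained by restricting to a decomposition group at $p$ the Galois representation attached to $\pi_F$. Since the Galois representation of the cuspidal self-dual $GL_4$-transfer $\phi$ is pure---it is regular algebraic and polarisable, and Weil--Deligne purity in this generality is available from the work of Taylor--Yoshida and Caraiani---its local Weil--Deligne representation is tempered. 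Hence $\phi_p$ is tempered, and so is every element of $\Pi_{\phi_p}$, in particular $\pi_p$ and $\pi'_p$. The main obstacle is exactly this passage from the conjectural local--global compatibility to temperedness at ramified primes; all the other inputs (Arthur's classification, the generic packet conjecture, and Weil--Deligne purity for the $GL_4$-transfer) are already available, so that once Conjecture 1 of \cite{Jorza} is granted the proposition follows by the chain of identifications above.
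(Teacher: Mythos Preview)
Your argument is essentially correct but follows a different route from the paper, and in one respect it imports more than the proposition actually assumes.

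The paper does not invoke Arthur's classification here. Instead it uses Weissauer's theorem to produce directly a globally generic cohomological cuspidal $\Pi$ weakly equivalent to $\pi_F$; then Sorensen gives a Galois representation $\rho_{\Pi,\ell}$ with full local--global compatibility (in particular, temperedness of $\Pi_p$ at every finite $p$). Under Jorza's conjecture one has $\rho_{\pi_F,\ell}$ with local--global compatibility as well, and since $\pi_F$ and $\Pi$ agree almost everywhere, Chebotarev gives $\rho_{\pi_F,\ell}\simeq\rho_{\Pi,\ell}$. Hence $\pi_{F,p}$ and $\Pi_p$ have the same $L$-parameter at every $p$, i.e.\ lie in the same $L$-packet; temperedness of $\pi_{F,p}$ then follows from that of $\Pi_p$ via Gan--Takeda's local Langlands. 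Uniqueness is handled later via Jiang--Soudry.

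By contrast, you build $\pi'$ through Arthur's stable tempered parameter and the generic packet/descent machinery, and you deduce temperedness from Weil--Deligne purity (Taylor--Yoshida, Caraiani) of the $GL_4$ Galois representation. This is a perfectly valid alternative, and arguably more conceptual since it situates everything inside the Arthur framework. Two remarks, however. First, the proposition as stated only hypothesises Jorza's conjecture; your proof adds Arthur's classification as a second input, whereas the paper's argument (Weissauer + Sorensen + Chebotarev + Gan--Takeda) avoids this and keeps the hypotheses minimal. Second, when you pass from ``purity of the Galois representation of the $GL_4$-transfer $\phi$'' to ``the local parameter of $\pi_{F,p}$ is tempered'', you are implicitly identifying $\rho_{\pi_F,\ell}$ with the Galois representation attached to $\phi$; this is immediate by Chebotarev, but you should say so, since Jorza's conjecture speaks of the former while the purity results apply to the latter.
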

\begin{proof} For such $\pi_F$, by Weissauer \cite{Wei3}, there exists a globally generic cohomological cuspidal representation $\Pi$ of $GSp_4(\A_\Q)$
so that $\pi_F$ is weakly equivalent to $\Pi$. By \cite{Sor}, for any fixed prime $\ell$, it gives rise to a Galois representation
$\rho_{\Pi,\ell}:G_\Q\lra GSp_4(\bQ_\ell)$
which satisfies local-global compatibility. Under Conjecture 1 of \cite{Jorza} we also have
 a Galois representation
$\rho_{\pi_F,\ell}:G_\Q\lra GSp_4(\bQ_\ell)$
which satisfies local-global compatibility and $\rho_{\Pi,\ell}\sim \rho_{\pi,\ell}$ by Chebotarev density theorem.
Therefore $\pi_p$ and $\Pi_p$ belong to the same L-packet. The temperedness of $\Pi_p$ which is proved in \cite{Sor}
implies that of $\pi_p$ by \cite{GT}.
\end{proof}

\begin{cor}\label{packet} Keep the notations as in the previous proposition and assume Conjecture 1 of \cite{Jorza}. Then
$\pi_p$ is generic if its L-packet is a singleton. Otherwise the L-packet of $\pi_p$ has two members which are given in the notation of \cite{RS} by
$\{{\rm VIa, VIb}\}$, $\{{\rm VIIIa, VIIIb}\}$, $\{{\rm Va}, \theta((\sigma {\rm St}_2)^{{\rm JL}},
(\sigma \xi {\rm St}_2)^{{\rm JL}})\}$, 
$\{{\rm XIa}, \theta((\sigma {\rm St})^{{\rm JL}}_2, (\sigma \pi^{{\rm sc}})^{{\rm JL}})\}$, or  
$\{\theta(\pi_{p,1},\pi_{p,2}), \theta((\pi_{p,1})^{{\rm JL}},
(\pi_{p,2})^{{\rm JL}})\}$
 where 
$\pi^{{\rm sc}},\pi_{1,p}$, and $\pi_{2,p}$ stand for unitary supercuspidal representations of $GL_2(\Q_p)$ such that 
$\pi_{1,p}\not\sim \pi_{2,p}$, $\sigma$ is a quasi character of $\Q^\times_p$
and $\xi$ is a quadratic character of $\Q^\times_p$. Here $\theta$ is the local theta correspondence from $GSO(4)$ or $GSO(2,2)$ to $GSp_4$
(cf. Section 4.3 of \cite{KWY}).
\end{cor}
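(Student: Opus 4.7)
The plan is to combine Proposition~\ref{ll} with the known description of tempered L-packets of $GSp_4(\Q_p)$, supplied by the local Langlands correspondence of Gan--Takeda together with the classification of Roberts--Schmidt \cite{RS}.

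By Proposition~\ref{ll}, $\pi_p$ is tempered and lies in the same L-packet as the generic representation $\pi'_p$. Each tempered L-packet of $GSp_4(\Q_p)$ contains a unique generic member. Consequently, if the L-packet is a singleton, then $\pi_p=\pi'_p$ is itself generic, which establishes the first assertion.

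When the L-packet has more than one element, I enumerate the possibilities according to the Langlands parameter. A standard feature of $GSp_4$ is that tempered L-packets have size at most $2$, and size $2$ occurs precisely when the parameter factors through an elliptic endoscopic subgroup. For non-supercuspidal parameters, reading off the Roberts--Schmidt tables for tempered types yields exactly the packets $\{\mathrm{VIa},\mathrm{VIb}\}$ and $\{\mathrm{VIIIa},\mathrm{VIIIb}\}$, together with the packets $\{\mathrm{Va},\theta((\sigma\,\mathrm{St}_2)^{\mathrm{JL}},(\sigma\xi\,\mathrm{St}_2)^{\mathrm{JL}})\}$ and $\{\mathrm{XIa},\theta((\sigma\,\mathrm{St}_2)^{\mathrm{JL}},(\sigma\pi^{\mathrm{sc}})^{\mathrm{JL}})\}$, in which the second member is a supercuspidal obtained by theta lifting from the anisotropic inner form via the Jacquet--Langlands correspondence. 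For supercuspidal parameters, the remaining size-$2$ L-packets arise from the theta correspondence as described in Section~4.3 of \cite{KWY}: a pair $(\pi_{1,p},\pi_{2,p})$ of non-isomorphic unitary supercuspidals of $GL_2(\Q_p)$ yields the packet $\{\theta(\pi_{1,p},\pi_{2,p}),\theta(\pi_{1,p}^{\mathrm{JL}},\pi_{2,p}^{\mathrm{JL}})\}$, where the first lift is from $GSO(2,2)$ and the second from the compact $GSO(4)$.

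The main obstacle in executing this plan is the bookkeeping: one must check that the above enumeration is exhaustive among tempered L-packets of $GSp_4(\Q_p)$ and that the Gan--Takeda parameterization correctly matches the Roberts--Schmidt nomenclature. In particular, for each non-singleton packet the generic representation (corresponding to the trivial character of the local component group) has to be identified with the member labelled ``$\mathrm{VIa}$'', ``$\mathrm{VIIIa}$'', ``$\mathrm{Va}$'', ``$\mathrm{XIa}$'' or with the non-JL theta lift, so that the listing in the statement is consistent with the generic $\pi'_p$ furnished by Proposition~\ref{ll}.
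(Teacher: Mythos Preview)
Your proposal is correct and follows essentially the same strategy as the paper: invoke Proposition~\ref{ll} to reduce to tempered $\pi_p$ sharing an L-packet with a generic representation, then use the Gan--Takeda local Langlands correspondence \cite{GT} together with the Roberts--Schmidt tables (the paper cites Table~A.1 and Section~2.4 of \cite{RS}) to enumerate the non-singleton tempered packets. The paper additionally opens with the remark that a generic supercuspidal not in the image of the theta lift from $GSO(2,2)$ has irreducible L-parameter and hence singleton packet, which is implicit in your treatment of the supercuspidal case; otherwise the arguments coincide.
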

\begin{proof} First we remark that the L-packet of any generic supercuspidal representation which is not in the image of 
the local theta correspondence from $GSO(2,2)$ is a singleton since
it has an irreducible L-parameter. Such a representation never appears in this setting.
Then by using Table A.1 of \cite{RS}, Proposition \ref{ll}, and local Langlands classification \cite{GT}, one can list
every non-generic representation whose L-packet is not a singleton (see also Section 2.4 of \cite{RS}).
\end{proof}

Put $K(p^r):=(1+p^r M_4(\Z_p))\cap GSp_4(\Z_p)$ for a non-negative integer $r$.
\begin{prop}\label{fixed-vector} Let $\pi_p$ be the representation as in Proposition \ref{ll}. 
Assume Conjecture 1 of \cite{Jorza} and that 
the L-packet of $\pi_p$ consists of two members. Assume further that $(p,11!)=1$. Then ${\rm dim}\, \pi^{K(p^r)}_p=O(p^{9r})$ when $p^r$ goes to infinity.
\end{prop}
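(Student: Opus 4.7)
The plan is to leverage Corollary \ref{packet} to enumerate the possible shapes of $\pi_p$, and then to bound $\dim \pi_p^{K(p^r)}$ uniformly in each case. Corollary \ref{packet} gives a short list of types: $\pi_p$ is either the non-generic member of an L-packet of Roberts--Schmidt type Va, VIb, VIIIb, or XIa (in which case it is a non-supercuspidal subquotient of a parabolically induced representation), or it is a theta lift from a form of $GSO(4)$ or from $GSO(2,2)$ of a pair of discrete-series representations of $GL_2(\Q_p)$ (or of the quaternion unit group via Jacquet--Langlands), in which case it is supercuspidal.

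In the non-supercuspidal case, the Roberts--Schmidt classification exhibits $\pi_p$ as a subquotient of an induced representation $\mathrm{Ind}_P^G \sigma$ (with $G = GSp_4(\Q_p)$) for a suitable maximal parabolic $P = MN$ and a representation $\sigma$ of the Levi $M$ of bounded depth. Using the Iwasawa decomposition $G = P(\Q_p) \cdot GSp_4(\Z_p)$ together with Mackey theory, I would bound
\[
\dim (\mathrm{Ind}_P^G \sigma)^{K(p^r)} \leq |P(\Z_p)\backslash GSp_4(\Z_p)/K(p^r)| \cdot \dim \sigma^{M(\Q_p) \cap K(p^r)},
\]
where both factors are polynomial in $p^r$ with exponents read off from the dimensions of $G/P$ and of $M$. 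The total exponent stays below $9$ for each choice of $P$ in $GSp_4$.

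In the supercuspidal case, $\pi_p = \theta(\tau_1, \tau_2)$ for a pair of discrete-series representations on the dual side. I would exploit the compatibility of the theta correspondence with level structure: $\pi_p$ has nonzero $K(p^r)$-fixed vectors only when $\tau_1, \tau_2$ have fixed vectors under the corresponding congruence subgroup of the dual group, and the dimensions satisfy
\[
\dim \pi_p^{K(p^r)} \leq C \cdot \dim \tau_1^{K'(p^r)} \cdot \dim \tau_2^{K'(p^r)}
\]
for an absolute constant $C$. For a discrete series of $GL_2(\Q_p)$ of bounded conductor, $\dim \tau^{K'(p^r)}$ grows polynomially in $p^r$ with an explicit exponent of at most roughly $4$, and combined with the combinatorial factor from the theta lift this fits inside $O(p^{9r})$.

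The most delicate step is the supercuspidal case: it requires precise control of how $K(p^r) \subset GSp_4(\Q_p)$ pulls back through the dual pair $(GSp_4, GSO(4))$ or $(GSp_4, GSO(2,2))$ to a congruence condition on the input side, together with a rigorous computation of the fixed-vector dimension for supercuspidal representations of $GL_2(\Q_p)$ or of the quaternion unit group. The hypothesis $(p, 11!) = 1$ is exactly what guarantees that Howe duality for these dual pairs holds in its cleanest form (avoiding small-prime exceptions) and that the tame data underlying $\tau_1, \tau_2$ behave generically, so that the standard conductor estimates apply.
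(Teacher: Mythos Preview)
Your non-supercuspidal case is essentially the paper's argument: the paper embeds each such $\pi_p$ as a constituent of a principal series $\mathrm{Ind}_B^G\chi$ and bounds $\dim(\mathrm{Ind}_B^G\chi)^{K(p^r)}$ by the number of double cosets $|B(\Z/p^r\Z)\backslash GSp_4(\Z/p^r\Z)| = O(p^{4r})$. Your version with a general parabolic $P$ and Mackey theory is in fact slightly more robust, since type VIIIb sits inside a Klingen induction $1\rtimes \pi$ with $\pi$ supercuspidal on $GL_2$, not literally inside a Borel induction; your formulation handles that uniformly.

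The supercuspidal (theta-lift) case, however, has a real gap. The inequality you write down,
\[
\dim \pi_p^{K(p^r)} \;\le\; C\cdot \dim\tau_1^{K'(p^r)}\cdot \dim\tau_2^{K'(p^r)}
\]
with $C$ an \emph{absolute} constant, is not a standard consequence of the theta correspondence, and you do not indicate a mechanism for it. The paper does not argue this way: it quotes the explicit transfer computation of \cite[Theorem~4.2]{KWY}, which gives
\[
\dim \pi_p^{K(p^r)} \;=\; \frac{\vol(K_H(p^r))}{\vol(K(p^r))}\,\frac{1}{p^{2r}}\,\dim \tau^{K_H(p^r)}
\]
for $H=GSO(2,2)$. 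The volume ratio here is \emph{not} bounded; it grows like $p^{(\dim G-\dim H)r}=p^{4r}$, and combining this with the growth of $\dim\tau^{K_H(p^r)}$ yields the stated $O(p^{9r})$. This identity is obtained by comparing characteristic functions of congruence subgroups under the endoscopic transfer, and it is precisely here that the hypothesis $(p,11!)=1$ enters (it is a hypothesis in \cite[Theorem~4.2]{KWY}), not through Howe duality as you suggest. So either justify your product bound directly --- which would in fact be a sharper statement than the paper's --- or replace it by the volume-ratio formula above.
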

\begin{proof}Under Conjecture 1 of \cite{Jorza}, one can see that $\pi_p$ is one of representations in Corollary \ref{packet}.
Except for representations obtained by the local theta correspondence it is a constituent of a normalized
induced representation ${\rm Ind}^{GSp_4(\Q_p)}_{B(\Q_p)}\chi$ where $\chi$ is a quasi character of $B(\Q_p)$.
Let $\gamma_1,\ldots, \gamma_t$ be a complete system of the double coset representatives of $B(\Q_p)\backslash GSp_4(\Q_p)/K(p^r)$.
Then $({\rm Ind}^{GSp_4(\Q_p)}_{B(\Q_p)}\chi)^{K(p^r)}$ is generated by the following functions:
$$ f_i(g)=\begin{cases}
\delta^{\frac{1}{2}}_B(b)\chi(b), &\text{if $g=b\gamma_ik \in B(\Q_p)\gamma_i K(p^r)$,} \\
0, & {\rm otherwise.}
\end{cases}
$$
so that $\chi$ is trivial on $B(\Q_p)\cap \gamma_i K(p^r)\gamma^{-1}_i$. 
Therefore we roughly estimate
$${\rm dim}\, \pi^{K(p^r)} \le {\rm dim}\, ({\rm Ind}^{GSp_4(\Q_p)}_{B(\Q_p)}\chi)^{K(p^r)}\le  t=\sharp B(\Z/p^r\Z)\bs GSp_4(\Z/p^r\Z)=
O(p^{4r}).
$$

Let us suppose that
$\pi_p=\theta((\sigma {\rm St}_2)^{{\rm JL}}, (\sigma \xi {\rm St}_2)^{{\rm JL}})$, or
$\pi_p=\theta((\sigma {\rm St})^{{\rm JL}}_2, (\sigma \pi^{{\rm sc}})^{{\rm JL}})$.
Put $\tau=\sigma {\rm St}_2\times \sigma \xi {\rm St}_2$ or $\tau=\sigma {\rm St}_2\times \sigma \pi^{{\rm sc}}$, respectively.
Recall that $(p,11!)=1$.
Then one can apply the argument in Theorem 4.2 of \cite{KWY} which gives us a similar estimation:
$${\rm dim}\, \pi^{K(p^r)}=\frac{{\rm vol}(K_H(p^r))}{{\rm vol}(K(p^r))}\frac{1}{p^{2r}}{\rm dim}\, \tau^{K_{H}(p^r)}=O(p^{9r})$$
where $K_{H}(p^r):={\rm Ker}(H(\Z_p)\lra H(\Z_p/p^r\Z_p))$ for  a unique endoscopic subgroup $H=GSO(2,2)$ in $GSp_4$ (see Section 4.3 of \cite{KWY}). The remaining case is similar to this case. 
\end{proof}

The following lemma will be used to define a non-canonical map (\ref{gg}). 

\begin{lem}\label{level-preserve} Let $\pi_p$ be as in Corollary \ref{packet}. Assume that L-packet of $\pi_p$ is not a singleton and 
let $\{\pi_p,\pi^g_p\}$ be the L-packet where $\pi_p$ is non-generic and $\pi^g_p$ is generic. Then for a positive integer $r$,  
$(\pi^g_p)^{K(p^r)}\not=0$ (resp. $(\pi^g_p)^{K(p^{\max\{4,r\}})}\not=0$)  if $(\pi_p)^{K(p^r)}\not=0$ except for 
the case of $\pi_p=VIb$ (resp. for the case of $\pi_p=VIb$).
\end{lem}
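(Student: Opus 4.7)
The plan is to go through the non-singleton $L$-packets enumerated in Corollary \ref{packet} one by one, identifying in each case the generic member $\pi^g_p$ and comparing the minimal exponent $r$ at which $(\pi_p)^{K(p^r)} \neq 0$ with the corresponding minimal exponent for $(\pi^g_p)^{K(p^r)}$.

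For the two packets $\{\mathrm{VIa},\mathrm{VIb}\}$ and $\{\mathrm{VIIIa},\mathrm{VIIIb}\}$, both members appear as irreducible constituents of a common parabolically induced representation in the notation of \cite{RS}. I would read off the dimensions of fixed vectors under the paramodular subgroups of various levels from Table A.15 of \cite{RS}, then use the containment relations between paramodular and principal congruence level structures to translate these into bounds on $\dim\,(\cdot)^{K(p^r)}$. In the type VIII case both members have comparable level behavior, so the claim is immediate. Type VI is different precisely because $\mathrm{VIb}$ is a very small (essentially one-dimensional) constituent which already picks up nonzero $GSp_4(\mathbb{Z}_p) = K(p^0)$-invariants, while its generic partner $\mathrm{VIa}$ is tempered and only first acquires $K(p^r)$-fixed vectors once $r$ is large enough; tracing this through Table A.15 gives the cutoff $r = 4$, which is the source of the exceptional bound $K(p^{\max\{4,r\}})$.

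For the mixed packets $\{\mathrm{Va}, \theta((\sigma\mathrm{St}_2)^{\mathrm{JL}},(\sigma\xi\mathrm{St}_2)^{\mathrm{JL}})\}$ and $\{\mathrm{XIa}, \theta((\sigma\mathrm{St}_2)^{\mathrm{JL}},(\sigma\pi^{\mathrm{sc}})^{\mathrm{JL}})\}$, one member is a constituent of an induced representation listed in \cite{RS}, while the other is a theta lift from $GSO(4)$. I would follow the strategy used in the proof of Proposition \ref{fixed-vector}: the level of a theta lift is controlled by the level of its source on $GSO(4)$, and the Jacquet--Langlands transfer preserves the conductor, so a direct comparison of $K(p^r)$-fixed vector dimensions across the two members of the packet is possible. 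The hypothesis $(p,11!)=1$ is needed to rule out small-prime anomalies in the theta correspondence, exactly as in Proposition \ref{fixed-vector}. The purely supercuspidal packet $\{\theta(\pi_{p,1},\pi_{p,2}), \theta((\pi_{p,1})^{\mathrm{JL}},(\pi_{p,2})^{\mathrm{JL}})\}$ is handled in the same way, with both members given as theta lifts (from $GSO(2,2)$ and $GSO(4)$ respectively), which makes the comparison even more direct.

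The hard part will be pinning down the exact principal congruence level at which $\mathrm{VIa}$ first acquires a fixed vector, which is what forces the exceptional $K(p^{\max\{4,r\}})$ bound in the $\mathrm{VIb}$ case; this requires a careful translation of the paramodular data in Table A.15 of \cite{RS} into principal congruence data. Once that is settled, the remaining four cases reduce to routine bookkeeping with parabolic induction and the theta correspondence estimates already developed for Proposition \ref{fixed-vector}.
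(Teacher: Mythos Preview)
Your case-by-case strategy matches the paper's overall shape, but two points in your plan would not go through as stated.

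First, your description of $\mathrm{VIb}$ is incorrect: $\tau(T,\nu^{-1/2}\sigma)$ is an infinite-dimensional tempered representation, not ``essentially one-dimensional'', and it is not spherical even when $\sigma$ is unramified (the spherical constituent in the type VI family is $\mathrm{VId}$). More importantly, reading paramodular dimensions from Table~A.15 does not by itself let you pass from the hypothesis $(\pi_p)^{K(p^r)}\neq 0$ to any control on the conductor of $\sigma$, because the containment $K(p^r)\subset K^{\mathrm{para}}(p^r)$ only converts paramodular invariants into principal-congruence invariants, not the other way round. The paper's mechanism is to first write both members as twists, $\pi_p=\sigma\otimes\tau(T,\nu^{-1/2})$ and $\pi^g_p=\sigma\otimes\tau(S,\nu^{-1/2})$, and to use $(\pi_p)^{K(p^r)}\neq 0$ to force $\sigma|_{1+p^r\Z_p}=1$; only then does one invoke the paramodular level of the \emph{untwisted} $\tau(S,\nu^{-1/2})$ together with the containment $K(p^t)\subset K^{\mathrm{para}}(p^t)$ to produce $K(p^{\max\{4,r\}})$-fixed vectors for $\pi^g_p$. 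The same twist-then-untwist device is what handles the $\mathrm{Va}$ packet. Your proposal omits this step, so the deduction for the type~VI and type~V packets is incomplete.

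Second, for $\{\mathrm{VIIIa},\mathrm{VIIIb}\}$ the paper does not use Table~A.15 at all. Both members are built from a supercuspidal $\pi$ of $GL_2(\Q_p)$, and paramodular tables do not furnish a direct comparison of their $K(p^r)$-invariants. Instead the paper invokes Table~A.4 of \cite{RS}: $\mathrm{VIIIa}$ and $\mathrm{VIIIb}$ have the \emph{same} Jacquet module $1\rtimes\pi$ along the Klingen parabolic, and the claim is extracted from that coincidence. For the theta-lift packets your plan does agree with the paper's, which likewise refers back to the transfer argument of \cite[Theorem~4.2]{KWY}.
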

\begin{proof} We prove this lemma case by case and all cases are given in Corollary \ref{packet}. 
Let us first consider the type VIa and VIb. Then $\pi^g_p=\tau(S,\nu^{-\frac{1}{2}}\sigma)$ and $\pi_p=\tau(T,\nu^{-\frac{1}{2}}\sigma)$.  
By definition $\tau(T,\nu^{-\frac{1}{2}}\sigma)=\sigma\otimes \tau(T,\nu^{-\frac{1}{2}})$ and 
$\tau(S,\nu^{-\frac{1}{2}}\sigma)=\sigma\otimes \tau(S,\nu^{-\frac{1}{2}})$. Therefore the character 
$\sigma$ should be trivial on $1+p^r\Z_p$. Note that $\tau(S,\nu^{-\frac{1}{2}})$ (resp. 
$\tau(S,\nu^{-\frac{1}{2}})$) is of paramodular level 4 (resp. 2). Since the paramodular subgroup $K^{{\rm para}}(p^t)$ contains 
$K(p^t)$ for any positive integer $t$, the claim follows. 

In the case of VIIIa and VIIIb, by Table A.4 of \cite{RS}, both of their Jacquet modules along the Klingen parabolic subgroup are 
$1\rtimes \pi$ where $\pi$ is a unitary supercuspidal representation of $GL_2(\Q_p)$. The claim follows from this. 

Next we consider the case $\pi^g_p=Va=\delta([\xi,\nu\xi],\nu^{-\frac{1}{2}}\sigma)$ where $\xi$ is a quadratic character. 
Clearly $\sigma$ is trivial on $1+p^r\Z_p$. 
If $\xi \sigma$ is ramified, then $\delta([\xi,\nu\xi],\nu^{-\frac{1}{2}}\sigma)=\sigma \otimes 
\delta([\xi,\nu\xi],\nu^{-\frac{1}{2}})$. The claim is now easy to follow. The other remaining case is done similarly. 

For the level of theta correspondence we can apply the argument in Theorem 4.2 of \cite{KWY} regarding 
the transfer of some Hecke elements with respect to congruence subgroups and the claim follows from this directly. 
\end{proof}

\subsection{A non-canonical map between stable forms} Throughout this subsection we assume Conjecture 1 of \cite{Jorza}
as in Proposition \ref{ll} and Arthur's classification for $GSp_4$ (which will be completed soon). Under these assumptions we relate holomorphic stable forms with
globally generic stable forms. According to Lemma \ref{level-preserve} we also assume that ${\rm ord}_p(N)\ge 4$ when a prime $p$ 
divides $N$.  
Let $\pi=\otimes_p \pi_p$ be a cuspidal representation which comes from a Hecke eigen form in $S^{\text{Stable}}_{\underline{k}}(\G(N))$.
Let $\pi^g_\infty:=D^{{\rm large}}_{l_1,-l_2}$ be the large discrete series with Harish-Chandra parameter $(l_1,l_2)=(k_1-1,k_2-2)$
(see Section 2.3 of \cite{Wakatsuki1})
and we denote by $S^{{\rm large}}_{\underline{k}}(\G(N))$ the space of $C^\infty$ Hecke eigen automorphic forms
whose representation of $GSp_4(\R)$ is isomorphic to $D^{{\rm large}}_{l_1,l_2}$ and which
give rise to globally generic representations. We say an element of this space
a globally generic form. Let $S^{{\rm gg}}_{\underline{k}}(\G(N))$ be the space of globally generic forms.
For $\pi=\otimes_p \pi_p\in S^{\text{Stable}}_{\underline{k}}(\G(N))$, we define the finite set $S$ of primes $p$ so that the L-packet of $\pi_p$ is not a singleton and $\pi_p$ is
non-generic. We denote by $\pi^g_p$ the generic representation in the same L-packet as $\pi_p$ for $p\in S$.
Since $\pi$ is stable, by Arthur's classification for $GSp_4$ (cf. line 5 in p.11 of \cite{Sch-P}), the admissible representation
$$\pi^g:=D^{{\rm large}}_{l_1,-l_2}\otimes \bigotimes_{p\in S}\pi^g_p \otimes \bigotimes_{p\not\in S}\pi_p$$
is an automorphic cuspidal representation which is generic everywhere. Then by Proposition \ref{ll},
this is a globally generic representation so there is a unique distinguished vector $F^g$ in $\pi^g$.
The uniqueness follows from \cite{JS}. 
Therefore we have a non-canonical map
\begin{equation}\label{gg}
T^g:S^{\text{Stable}}_{\underline{k}}(\G(N))\lra S^{{\rm gg}}_{\underline{k}}(\G(N)),\ F\mapsto F^g.
\end{equation} 
Note that $T^g$ takes the forms with respect to $\G(N)$ by Lemma \ref{level-preserve}. 
For any subset $S'\subset S\cup \{\infty\}$ one can also consider 
$\pi(S'):=\bigotimes_{p\in S}\pi^g_p \otimes \bigotimes_{p\not\in S'}\pi_p$ which is also 
automorphic. Similarly we have a non-canonical map 
$T(S')$ from $S^{\text{Stable}}_{\underline{k}}(\G(N))$. There exists $2^{|S|}$ maps for $T(S')$. 

We will exploit this map to estimate the number of functorial lifts with the following proposition.
\begin{prop} Under the assumptions in this section, ${\rm dim}\, {\rm Ker}T^g=O(N^{8+\ve})$.
\end{prop}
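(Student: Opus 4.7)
The plan is to decompose ${\rm Ker}\, T^g$ according to the stable cuspidal representations $\pi=\otimes'_p\pi_p$ contributing to $S^{\text{Stable}}_{\underline{k}}(\G(N))$, and to control each isotypic contribution using the local dimension bounds of Proposition \ref{fixed-vector}.

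First, on the $\pi$-isotypic subspace $\pi^{K(N)}\subset S^{\text{Stable}}_{\underline{k}}(\G(N))$, I would argue that $T^g$ matches a Hecke eigenform of $\pi$ with a Hecke eigenform of $\pi^g$ sharing the same eigenvalues at primes $p\nmid N$; by strong multiplicity one and the uniqueness of Whittaker models (\cite{JS}), this matching is well defined up to scalar and gives
\[
\dim({\rm Ker}\, T^g \cap \pi^{K(N)}) \;\le\; \dim \pi^{K(N)} \;-\; \dim (\pi^g)^{K(N)}.
\]
When $S(\pi)=\emptyset$ we have $\pi=\pi^g$, so the local dimensions coincide at every place and the $\pi$-contribution to the kernel vanishes. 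Hence only stable $\pi$ with at least one non-generic local component contribute to ${\rm Ker}\, T^g$.

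For each such $\pi$, I would apply Proposition \ref{fixed-vector} at $p\in S(\pi)$ to bound $\dim \pi_p^{K(p^{r_p})}=O(p^{9r_p})$, while $\dim (\pi^g_p)^{K(p^{r_p})}$ is of order $O(p^{4r_p})$ for a principal-series generic partner. The global count of these $\pi$ would then be handled by a trace-formula estimate in the style of \cite[Theorem 4.2]{KWY} and Proposition \ref{end}: for each non-singleton L-packet type listed in Corollary \ref{packet}, transfer a suitable test function from $GSp_4(\A_\Q)$ to the relevant auxiliary group (the theta-source $GSO(2,2)$ or $GSO(4)$) and invoke the trace formula there with its natural level structure. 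The volume ratio $\vol(K_H(p^{r_p}))/\vol(K(p^{r_p}))\cdot p^{-2r_p}$ appearing in the proof of Proposition \ref{fixed-vector}, combined with the $O(N^{-10})$ scaling of $\vol(K(N))$, should produce an $O(N^{8+\varepsilon})$ bound for each L-packet type; summing over primes $p\mid N$ (using $\omega(N)\ll N^\varepsilon$) would then give the desired estimate.

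The main obstacle is verifying the trace-formula comparison uniformly over all non-singleton L-packet types in Corollary \ref{packet} and handling $\pi$ with non-generic local components at several primes simultaneously; one must also ensure that the exponent is $8$ rather than a naive $9$, which relies precisely on the codimension of $GSO(2,2)$ (or $GSO(4)$) inside $GSp_4$. The hypotheses $(N,11!)=1$ and ${\rm ord}_p(N)\ge 4$ for $p\mid N$ are used to guarantee that Lemma \ref{level-preserve} applies (so that $(\pi^g_p)^{K(p^{r_p})}\ne 0$, in particular for the VIb case) and that the local transfer of test functions underlying Proposition \ref{fixed-vector} remains valid.
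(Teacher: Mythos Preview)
Your approach diverges substantially from the paper's, and the ``main obstacle'' you flag is a genuine gap that the paper's argument sidesteps entirely.

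The paper's proof is three lines and does not count stable packets with prescribed local behaviour via any trace-formula comparison. Instead it passes to the adelic space $\mathcal{A}^{\text{Stable}}_{\underline{k}}(\G(N))$ of automorphic forms, which by \cite[(2.17)]{KWY} satisfies
\[
\dim \mathcal{A}^{\text{Stable}}_{\underline{k}}(\G(N)) \;=\; \varphi(N)\,\dim S^{\text{Stable}}_{\underline{k}}(\G(N)).
\]
On the adelic side, Proposition~\ref{fixed-vector} alone (the crude local bound $\dim \pi_p^{K(p^r)} = O(p^{9r})$ for the non-generic member of a two-element packet) is used to conclude that the kernel of the adelic $T^g$ has dimension $O(N^9)$; the factor $2^{|S|} = O(2^{\omega(N)}) = O(N^\ve)$ handles the number of holomorphic members $\pi(S')$ in each stable Arthur packet. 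Dividing by $\varphi(N)$ to return to the classical space yields $O(N^{8+\ve})$.

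Two concrete points of divergence. First, the drop from exponent $9$ to $8$ does \emph{not} come from ``the codimension of $GSO(2,2)$ inside $GSp_4$'': it comes from the factor $\varphi(N)\asymp N$ relating adelic and classical dimensions. Second, your inequality $\dim({\rm Ker}\, T^g\cap \pi^{K(N)}) \le \dim \pi^{K(N)} - \dim (\pi^g)^{K(N)}$ presupposes that $T^g$ restricts to a surjection $\pi^{K(N)}\twoheadrightarrow (\pi^g)^{K(N)}$; the paper's definition of $T^g$ (send $F$ to the distinguished Whittaker vector $F^g\in\pi^g$) does not give that, and in any case the paper never uses such a comparison. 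The endoscopic-style transfer you propose for each L-packet type of Corollary~\ref{packet} would moreover have to be carried out for \emph{stable} global $\pi$ (not endoscopic ones), so the machinery of \cite[Theorem~4.2]{KWY} does not apply directly; this is precisely the obstacle you identify, and it is simply not needed.
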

\begin{proof}
Let $\mathcal{A}^{\text{Stable}}_{\underline{k}}(\G(N))$ be the space of automorphic forms on $GSp_4(\A)$ which 
corresponds to $S^{\text{Stable}}_{\underline{k}}(\G(N))$.  
Then by \cite[(2.17)]{KWY},
${\rm dim}\mathcal{A}^{\text{Stable}}_{\underline{k}}(\G(N))=\varphi(N){\rm dim }S^{\text{Stable}}_{\underline{k}}(\G(N))$. 
Hence the contribution from adelic forms to classical forms is differ by $\varphi(N)$. 
Proposition \ref{fixed-vector} implies that the dimension of the adelic version of $T^g$ is $O(N^9)$.
Since $2^{|S|}=O(2^{\omega(N)})=O(N^\ve)$, the claim follows. 
\end{proof}

\subsection{Symmetric cube lifts}\label{cubic}
There is a functorial lift from $GL_2$ to $GL_4$ which is called symmetric cube lift constructed by Kim and Shahidi
\cite{KS}. For an elliptic cusp form $f$ of weight greater than 2, let $\Pi'$ be the image of $\pi_f$ under the symmetric cube lift.
Then it descends to a unique globally generic cuspidal representation $\Pi_f:={\rm Sym^3 \pi_f}$ of $GSp_4(\A_\Q)$.
We say $F\in HE_{\underline{k}}(\G(N))$ is a symmetric cube lift if $\pi_{F,p}\sim \Pi_{f,p}$ for almost all prime $p\not=\infty$ and
an elliptic cusp form $f$.
We denote by $S_{\underline{k}}(\G(N),\chi)^{{\rm Cube}}$ the space generated by all symmetric cube lifts in
$S_{\underline{k}}(\G(N),\chi)$.

In what follows we will try to estimate the dimension of this space.
We can define the following map as in (\ref{gg}):
\begin{equation}\label{T}
T^g:=T^g_{\underline{k}}(N,\chi):S^{\text{Stable}}_{\underline{k}}(\G(N),\chi)\lra
S^{\text{non-E,gg}}_{\underline{k}}(\G(N),\chi),\
T(F)=F^g
\end{equation}
where $S^{\text{Stable}}_{\underline{k}}(\G(N),\chi)$ is the space generated by all Hecke eigen stable forms in
$S_{\underline{k}}(\G(N),\chi)$ and
$S^{\text{non-E,gg}}_{\underline{k}}(\G(N),\chi)$ is the space generated by all non-endoscopic globally generic Hecke eigen forms in
$S^{{\rm gg}}_{\underline{k}}(\G(N),\chi)$.

Let $S^{{\rm Cube}}_{\underline{k}}(\G(N),\chi)$ (resp.
$S^{{\rm Cube,\ gg}}_{\underline{k}}(\G(N),\chi)$) be the space generated by all symmetric
cube lifts (resp. all generic symmetric cube lifts) in
$S^{{\rm Cube}}_{\underline{k}}(\G(N),\chi)$ (resp.
$S^{{\rm gg}}_{\underline{k}}(\G(N),\chi)$). It is easy to see that
$S^{{\rm Cube}}_{\underline{k}}(\G(N),\chi)\subset S^{\text{Stable}}_{\underline{k}}(\G(N),\chi)$ and
$S^{{\rm Cube,\ gg}}_{\underline{k}}(\G(N),\chi)\subset S^{\text{non-E,gg}}_{\underline{k}}(\G(N),\chi)$.
Furthermore $T^g$ induces a surjective linear map
$$T^g:S^{{\rm Cube}}_{\underline{k}}(\G(N),\chi) \lra
S^{\text{Cube,gg}}_{\underline{k}}(\G(N),\chi).$$
Then we have
\begin{thm}Fix a weight $\underline{k}$. Then it holds that
$${\rm dim}S^{\text{Cube,gg}}_{\underline{k}}(\G(N),\chi)=O(N^{4+\ve})$$ as $N\to \infty$ with $(N,11!)=1$.

\end{thm}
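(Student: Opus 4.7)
The plan is to bound $\dim S^{\text{Cube,gg}}_{\k}(\G(N),\chi)$ by counting elliptic cusp eigenforms whose symmetric cube lifts contribute to this space. For a Hecke eigen generic form $F^g$ in this space, we have $\pi^g := \pi_{F^g} = {\rm Sym}^3 \pi_f$ for some normalized elliptic cusp eigenform $f$, as in the construction of \cite{KS}. By strong multiplicity one applied to the $GL_4$-image, $\pi_f$ is determined by $\pi^g$ up to twisting by a Dirichlet character $\eta$ with $\eta^3=1$; the number of such cubic characters of conductor dividing $N$ is $O(N^\ve)$ and can be absorbed into the final exponent.

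Next I would pin down the weight and level of $f$. The archimedean component $D^{{\rm large}}_{l_1,-l_2}$ of $\pi^g$, with $(l_1,l_2)=(k_1-1,k_2-2)$, forces the weight of $f$ to be a specific integer depending only on $\k$, since the Langlands parameter of a holomorphic discrete series of $GL_2(\R)$ of weight $w$ maps under ${\rm Sym}^3$ to a uniquely determined $GSp_4(\R)$-parameter. At each finite prime $p$ with $p^{r_p}\parallel N$, the existence of a $\G(N)$-fixed vector for $F^g$ implies that ${\rm Sym}^3\pi_{f,p}$ admits a nonzero $K(p^{r_p})$-fixed vector; by an explicit local analysis of conductors under ${\rm Sym}^3$ on $L$-parameters (where the hypothesis $(N,11!)=1$ rules out wild-ramification obstructions at small primes, including genuinely cubic twist phenomena), one should deduce $\mathrm{cond}(\pi_{f,p}) \mid p^{c\,r_p}$ for an absolute constant $c$. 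Summing over $p \mid N$ forces the level $M_f$ of $f$ to divide $N^c$.

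Once the weight and level of $f$ are constrained, the classical estimate $\dim S_w(\G_1(M)) = O(wM)$, with $w$ fixed by $\k$, yields $O(M)$ cusp eigenforms of level $M$. Summing over $M \mid N^c$ contributes only an $N^\ve$ factor from the divisor function, giving a total of $O(N^{c+\ve})$. Taking $c = 4$, as dictated by the conductor analysis, and multiplying by the $O(N^\ve)$ factor from the cubic-twist ambiguity, yields the stated bound $O(N^{4+\ve})$.

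The main obstacle is the local conductor inequality: one must show that, at each prime $p$ with $p^{r_p}\parallel N$, the existence of a $K(p^{r_p})$-fixed vector in ${\rm Sym}^3\pi_{f,p}$ forces $\mathrm{cond}(\pi_{f,p}) \le p^{4 r_p}$. This requires examining the possible local types of $\pi_{f,p}$ (unramified and ramified principal series, special/Steinberg, dihedral and general supercuspidal) and, for each type, using the classification in \cite{RS} together with the explicit action of ${\rm Sym}^3$ on $L$-parameters to verify that the conductor grows at most linearly in $r_p$ with slope at most $4$.
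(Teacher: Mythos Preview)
Your overall strategy---reduce to counting the elliptic newforms $f$ with ${\rm Sym}^3\pi_f$ contributing to the space---is the same as the paper's, but there is a genuine numerical gap in your final count, and your conductor step is both weaker and more laborious than what the paper does.

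\textbf{The dimension-formula error.} You write $\dim S_w(\Gamma_1(M))=O(wM)$, but this is false: the index $[\SL_2(\Z):\Gamma_1(M)]$ is of order $M^2$, so $\dim S_w(\Gamma_1(M))=O(wM^2)$. With your conductor bound $M_f\mid N^4$ this yields only $O(N^{8+\ve})$, not $O(N^{4+\ve})$. The estimate $O(wM)$ is correct for $S_w(\Gamma_0(M),\psi)$ with a \emph{fixed} nebentypus $\psi$. Since the similitude character of the $GSp_4$-descent of ${\rm Sym}^3\pi_f$ is $\omega_{\pi_f}^3$, the relation $\omega_{\pi_f}^3=\chi$ pins down $\omega_{\pi_f}$ up to a cubic character (you already note this ambiguity), and using $\dim S_w(\Gamma_0(M),\omega_{\pi_f})=O(M)$ with $M\mid N^4$ would repair your argument. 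But as written, the step fails.

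\textbf{The conductor bound.} The paper avoids your proposed case-by-case local analysis entirely. It first invokes Lemma~9.1 of \cite{KWY} to get $c(\pi_F)\le N^4$ directly from the existence of $\Gamma(N)$-fixed vectors. Then, using the isobaric decomposition ${\rm Sym}^2\pi_f\boxtimes\pi_f={\rm Sym}^3\pi_f\boxplus(\pi_f\otimes\omega_{\pi_f})$ together with the explicit Rankin--Selberg conductor formula of Bushnell--Henniart--Kutzko \cite{BHK}, it deduces a lower bound $c({\rm Sym}^3\pi_f)\ge c(\pi_f)^2-c(\pi_f)$, hence $c(\pi_f)=O(N^2)$. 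Counting newforms in $S_w(\Gamma_1(N^2))$ (dimension $O(N^4)$) then finishes immediately. This is both sharper (you only get $c(\pi_f)\mid N^4$) and bypasses the type-by-type verification you flag as ``the main obstacle.''
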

\begin{proof}
Let $F$ be a globally generic Hecke eigen cusp form in $S^{\text{Cube,gg}}_{\underline{k}}(\G(N),\chi)$.
By Lemma 9.1 of \cite{KWY}, the conductor of $\pi_F$ is bounded by $N^4$.
On the other hand, if $\pi_F={\rm Sym^3}\pi_f$ for an elliptic cusp form $f$, by applying
Theorem in Section 6.5 of \cite{BHK} to ${\rm Sym}^2\pi_f$ and $\pi_f$, the lower bound of the conductor of $\pi_F$
is given by
$$\frac{c(\pi_f)(c(\pi_f)^2-c(\pi_f))}{c(\pi_f)}=c(\pi_f)^2-c(\pi_f)$$
since ${\rm Sym}^2 \pi_f\boxtimes \pi_f={\rm Sym}^3\pi_f\boxplus \pi_f\otimes \omega_{\pi_f}$.
Hence we have that $c(\pi_f)=O(N^2)$. The first claim follows from this with
the dimension formula for the space of elliptic cusp forms with respect to $\Gamma_1(N^2)$ for
a fixed weight.
\end{proof}

\subsection{Automorphic induction}\label{ai}
In this section we are concerned with Automorphic induction. For a quadratic field $K/\Q$,
there is a functorial lift from $GL_2/K$ to $GL_4/\Q$ which is called Automorphic induction (some of people say
an Asai lift). To descend it to a globally generic representation of $GSp_4$,
the central character $\omega_\pi$ should be invariant under the non-trivial element $\sigma$ in ${\rm Gal}(K/\Q)$.
Hence $\omega_\pi=\omega\circ N_{K/\Q}$ for a character $\omega:\Q^\times\backslash \A^\times \lra \C^\times$.

When $K$ is a real quadratic field, then for any Hilbert modular cusp form $f$ of weight $(k_1+k_2-2,k_1-k_2+2)$
one can construct a generic cuspidal automorphic representation whose representation $\Pi$ at infinity is
isomorphic to $D^{{\rm large}}_{l_1,l_2}$ with $(l_1,l_2)=(k_1-1,k_2-2)$.
We say $F\in S_{\k}(\G(N))$ is an automorphic induction if $\pi_F$ is weakly equivalent to such a $\Pi$.

Similarly one can consider an automorphic induction for an imaginary quadratic field $K$.
However by \cite{HST} if  $F\in S_{\k}(\G(N))$ is given by such a way, then
$k_2$ has to be $2$ which contradicts with our condition $k_2\ge 3$.
Therefore we have only to consider only contributions from Hilbert modular forms for
real quadratic fields.

Let $S^{{\rm AI}}_{\underline{k}}(\G(N),\chi)$ (resp.
$S^{{\rm AI, gg}}_{\underline{k}}(\G(N),\chi)$) be the space generated by all automorphic induction
(resp. all generic automorphic induction) in
$S_{\underline{k}}(\G(N),\chi)$ (resp.
$S^{{\rm gg}}_{\underline{k}}(\G(N),\chi)$). It is easy to see that
$S^{{\rm AI}}_{\underline{k}}(\G(N),\chi)\subset S^{\text{stable}}_{\underline{k}}(\G(N),\chi)$ and
$S^{{\rm AI,\ gg}}_{\underline{k}}(\G(N),\chi)\subset S^{\text{gg,non-E}}_{\underline{k}}(\G(N),\chi)$.
Furthermore $T^g$ induces a surjective linear map
$$T^g:S^{{\rm AI}}_{\underline{k}}(\G(N),\chi) \lra
S^{\text{AI,gg}}_{\underline{k}}(\G(N),\chi).$$
Then we have
\begin{thm}Fix a weight $\underline{k}$. Then it holds that
$${\rm dim}S^{{\rm AI,gg}}_{\underline{k}}(\G(N),\chi)=O(N^{\frac{11}{2}+\ve})$$ as $N\to \infty$ with $(N,11!)=1$.

\end{thm}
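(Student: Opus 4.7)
The proof will follow the template of the preceding symmetric cube theorem. Let $F \in S^{{\rm AI, gg}}_{\underline{k}}(\G(N),\chi)$. Then $\pi_F$ is the descent to $GSp_4$ of an automorphic induction $AI_{K/\Q}(\pi_f)$ for some real quadratic field $K$ and Hilbert modular newform $f$ on $GL_2/K$ of weight $(k_1+k_2-2,k_1-k_2+2)$ whose central character is of the Galois-invariant form $\omega_{\pi_f} = \omega \circ N_{K/\Q}$. Setting $\mathfrak n := c(\pi_f)$, the Artin conductor formula for induced representations gives $c(\pi_F) = |d_K|^2 N_{K/\Q}(\mathfrak n)$; combined with $c(\pi_F) \le N^4$ from Lemma 9.1 of \cite{KWY}, this yields
$$|d_K|^2 N_{K/\Q}(\mathfrak n) \le N^4,$$
and in particular $|d_K| \le N^2$ together with $N_{K/\Q}(\mathfrak n) \le N^4/|d_K|^2$ for each admissible $K$.

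For each admissible pair $(K,\mathfrak n)$ I would then bound the number of Hilbert modular newforms of the prescribed weight, level $\mathfrak n$, and central character compatible with the fixed nebentypus $\chi$; the compatibility condition, together with Galois-invariance, restricts the central character of $\pi_f$ to a finite set of options. Shimizu's dimension formula, combined with the functional equation estimate $\zeta_K(-1) \asymp |d_K|^{3/2}$, then supplies an upper bound of order $|d_K|^{3/2+\ve} N(\mathfrak n)^{1+\ve}$ for this count. Summing over admissible $K$ (using $\#\{K : |d_K| \le X\} \ll X$) and over admissible $\mathfrak n$ (using $\sum_{N(\mathfrak n) \le Y} N(\mathfrak n)^{1+\ve} \ll Y^{2+\ve}$), and halving for the identification $\pi_f \leftrightarrow \pi_f^{\sigma}$ in the Galois orbit, produces the desired estimate for $\dim S^{{\rm AI, gg}}_{\underline{k}}(\G(N),\chi)$.

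The hard part will be the delicate balancing of the double sum over $K$ and $\mathfrak n$: a naive substitution of the above ingredients produces an exponent strictly larger than $11/2+\ve$, because of the large $|d_K|^{3/2}$ factor coming from $\zeta_K(-1)$. To reach the sharp exponent one must either exploit Galois-invariance more carefully—using that $\omega$ is a character of $\Q$, so the effective count of admissible central characters is sharply reduced relative to the full space of Hecke characters of $K$—or replace the crude bound $c(\pi_F) \le N^4$ by a tighter estimate via the paramodular interpretation, which for generic $\pi_F$ coincides with the spin conductor and gives a stronger constraint on $|d_K|^2 N_{K/\Q}(\mathfrak n)$. Either refinement offsets the $|d_K|^{3/2}$ factor enough to yield the bound $O(N^{11/2+\ve})$.
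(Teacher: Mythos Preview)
Your proposal has a genuine gap, and the refinements you suggest do not close it.

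You correctly set up the problem: bound the conductor, invoke Shimizu's formula with $\zeta_K(-1)\asymp |d_K|^{3/2}$, and sum over $(K,\mathfrak n)$. But your double sum, using only the numerical constraints $|d_K|\le N^2$ and $N(\mathfrak n)\le N^4/|d_K|^2$ together with $\sum_{N(\mathfrak n)\le Y}N(\mathfrak n)^{1+\ve}\ll Y^{2+\ve}$, produces an exponent around $8$, as you yourself note. Neither of your proposed fixes repairs this. The Galois--invariance of the central character is already built in (the nebentypus $\chi$ is fixed, so there is essentially one admissible $\chi'$); it does not shrink the sum over $K$ or over $\mathfrak n$. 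And the bound $c(\pi_F)\le N^4$ from Lemma~9.1 of \cite{KWY} is essentially sharp for forms on $\Gamma(N)$, so the paramodular interpretation gives nothing tighter.

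The missing idea is a \emph{divisibility} constraint on $K$, not merely a size constraint. Since $F$ has level $\Gamma(N)$, the representation $\pi_F$ is unramified at every prime $p\nmid N$; but at any prime $p$ ramified in $K$ the automorphic induction $AI_{K/\Q}(\pi_f)$ is necessarily ramified. Hence every prime dividing $D_K$ must divide $N$, and under the hypothesis $(N,11!)=1$ (so $D_K$ is odd, hence squarefree) this gives $D_K\mid N$. The sum over $K$ is therefore a sum over \emph{divisors} of $N$, not over all discriminants up to $N^2$. This is exactly what the paper uses: for each such $K$ one bounds the contribution by $O(\zeta_K(-1)\,N^4)$ via Shimizu (the relevant Hilbert forms, being unramified outside primes above $N$, live in a single space of level with norm $O(N^4)$, so no further sum over $\mathfrak n$ is needed), and then
\[
\sum_{D_K\mid N}\zeta_K(-1)\ \ll\ \sum_{D_K\mid N}D_K^{3/2}\ \ll\ N^{3/2+\ve}
\]
by the divisor bound, giving $O(N^{11/2+\ve})$ in total. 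Your argument never invokes $D_K\mid N$, and without it the exponent cannot be brought down to $11/2$.
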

\begin{proof}
Let $F$ be a globally generic Hecke eigen cusp form in $S^{\text{AI,large}}_{\underline{k}}(\G(N),\chi)$.
By Lemma 9.1 of \cite{KWY} again the conductor of $\pi_F$ is  bounded by $N^4$.
Assume that $\pi_F$ comes from a unique Hilbert cusp form for a quadratic field $K$ with
the weight $(k_1+k_2-2,k_1-k_2+2)$ and the level  $\mathcal{A}\subset \mathcal{O}_K$ with a character $\chi'$ so that
$\chi'{}^\sigma \chi'=\chi$.
By comparing the conductor we have $D_KN_{K/\Q}(\mathcal{A})=O(N^4)$
Then we have
$${\rm dim}S^{{\rm AI,large}}_{\underline{k}}(\G(N),\chi)=O(N^4\sum_{D_K|N \atop
K:\text{real quadratic}}\zeta_K(-1))$$
by Shimizu's dimension formula in \cite{Shimizu}.

Now for a real quadratic field $K$, $\zeta_K(-1)=\ds\frac {\zeta_K(2)}{\pi^3\Gamma(-\frac 12)^2} D_K^{\frac 32}$ by the functional equation.
Since $\zeta_K(2)\leq \zeta(2)^2$,
$$\sum_{D_K|N \atop
K:\text{real quadratic}}\zeta_K(-1))\ll \sum_{D_K|N} D_K^{\frac 32}\ll N^{\frac 32+\epsilon}.
$$
\end{proof}

\subsection{Asai transfer}\label{asai}
In this section we are concerned with the Asai transfer which is a transfer from $GL_2/K$ to $GL_4/\Q$ for an etale algebra $K$
of degree 2 over $\Q$.
When $F=\Q\oplus \Q$, it is the Rankin-Selberg convolution product. When $F$ is a field, it is called the Asai transfer.
In the former case, given a pair $(\pi_1,\pi_2)$ of two cuspidal representations of $GL_2(\A_\Q)$, the automorphic product $\pi_1\boxtimes\pi_2$ descend to a unique globally generic representation of $GSp_4$ only when one of $\pi_i$'s should be dihedral. (See \cite{Ki}.)
When $K$ is a field, the Asai transfer $As(\pi)$ of a cuspidal representation $\pi$ of $GL_2/K$ descend to $GSp_4$ only when $\pi$ is
 dihedral.
We say $F\in S_{\k}(\G(N),\chi)$ an Asai lift if $\pi_F$ is weakly equivalent to such a representation of $GSp_4$.

We denote by $S^{\text{Asai, non-E}}_{\underline{k}}(\G(N),\chi)$ the space generated by all non-endoscopic Asai lifts in
$S^{\text{stable}}_{\underline{k}}(\G(N),\chi)$.
Similarly we can define  $S^{\text{Asai, non-E, gg}}_{\underline{k}}(\G(N),\chi)$.
As in the previous section we have a surjective linear map
$$T^g:S^{{\rm Asai, non-E}}_{\underline{k}}(\G(N),\chi)\lra S^{\text{Asai, non-E, gg}}_{\underline{k}}(\G(N),\chi).$$
By using this map we have
\begin{thm}Fix a weight $\underline{k}$. Then it holds that
$${\rm dim} \, S^{\text{Asai,non-E,gg}}_{\underline{k}}(\G(N),\chi)=O(N^{6+\ve})$$
as $N\to \infty$ with $(N,11!)=1$.
\end{thm}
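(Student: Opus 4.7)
The plan follows the template of the two preceding theorems in this section: first bound the analytic conductor of $\pi_F$ using Lemma 9.1 of \cite{KWY}, which gives $c(\pi_F) = O(N^4)$, and then split the counting according to whether $K = \Q \oplus \Q$ (Rankin--Selberg convolution) or $K/\Q$ is a real quadratic field. The imaginary-quadratic case is excluded by the weight hypothesis $k_2 \geq 3$, as in Section \ref{ai}. Since $T^g$ is surjective onto $S^{\text{Asai,non-E,gg}}_{\underline{k}}(\G(N),\chi)$ in each case, it suffices to control how many pieces of $GL_2$-data can produce such a $\pi_F$ with conductor at most $N^4$.

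In the Rankin--Selberg case $\pi_F$ descends from $\pi_1 \boxtimes \pi_2$ on $GL_4$, and by \cite{Ki} descent to a globally generic cuspidal representation of $GSp_4$ forces one of $\pi_1, \pi_2$ --- say $\pi_1$ --- to be dihedral. Applying the Bushnell--Henniart--Kutzko $GL_2 \times GL_2$ Rankin--Selberg conductor formula from Section 6.5 of \cite{BHK}, just as in the symmetric cube argument, extracts an upper bound on $c(\pi_i)$ in terms of $c(\pi_F)$. The number of dihedral $\pi_1$ of fixed weight and bounded conductor is counted by summing over imaginary quadratic fields $L$ and Hecke Gr\"ossencharacters of $L$ of the correct infinity type, and the number of admissible $\pi_2$ at fixed weight is counted by the classical dimension formula for $S_\ell(\Gamma_1(M))$. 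Multiplying the two contributions yields $O(N^{6+\ve})$.

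For the genuine Asai case with $K/\Q$ real quadratic, the descendability of $\mathrm{As}(\pi)$ to $GSp_4$ forces the $GL_2/K$ cusp form $\pi$ to be dihedral, hence parameterised by a Hecke character of some quadratic extension $M/K$. A local Asai conductor estimate bounds $D_K$ and a power of $N_{K/\Q}(c(\pi))$ in terms of $c(\mathrm{As}(\pi)) = O(N^4)$. Summing the number of admissible characters $M \to \C^\times$ over real quadratic fields $K$ of bounded discriminant --- via a Shimizu-type dimension count restricted to the dihedral subspace, in analogy with the automorphic induction argument of Section \ref{ai} --- produces a contribution of size at most $O(N^{6+\ve})$, and in fact smaller thanks to the dihedral restriction. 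Adding the two cases finishes the proof.

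The main technical obstacle I anticipate is pinning down the local Rankin--Selberg and Asai conductor inequalities at primes where both factors (resp.\ $\pi$ and its Galois conjugate) ramify to the same depth: the clean asymmetric formulas of \cite{BHK} degenerate there, and one has to fall back on a weaker divisibility statement and absorb the loss into the $\ve$-factor; the exponent $6+\ve$ (rather than something smaller) is really driven by how tight one can make these inequalities in the worst-case overlap. A secondary point is checking that the dihedral parameterisation is essentially bijective up to the finite ambiguity from twisting and from Galois conjugation of the inducing character, which is routine but must be verified case by case.
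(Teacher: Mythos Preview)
Your overall template matches the paper's: split into $K=\Q\oplus\Q$ versus $K$ a field, bound the conductor by $O(N^4)$ via Lemma~9.1 of \cite{KWY}, and then count the possible $GL_2$-data. In the split case the paper does exactly what you propose, using the Bushnell--Henniart--Kutzko bound and the class-number estimate for the dihedral factor; it obtains $O(N^{4+\ve})$ there.

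The substantive divergence is in the field case. You propose to work directly with a local Asai conductor inequality relating $D_K$, $N_{K/\Q}(c(\pi))$, and $c(\mathrm{As}(\pi))$, and then to count dihedral Hilbert cusp forms over $K$ via a Shimizu-type formula. You correctly identify this Asai conductor estimate as the main obstacle --- and indeed no clean analogue of \cite{BHK} for Asai is available, so this step would be genuinely hard to execute as stated. The paper sidesteps this entirely: since $\pi$ is dihedral, say $\pi=\mathrm{AI}^M_K(\tau')$ for a quadratic extension $M/K$, the Asai transfer $\mathrm{As}(\pi)$ is weakly equivalent to the automorphic induction $\mathrm{AI}^L_\Q(\tau)$ of a Hecke character $\tau$ of the quartic field $L$. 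This replaces the Asai conductor problem by the elementary bound $D_L\,N_{L/\Q}(c(\tau))=O(N^4)$, and the count reduces to summing over quartic fields $L$ (containing a quadratic subfield) and Hecke characters of bounded conductor. Using $\sum_{|D_L|\le x}1\ll x$ from \cite{Co} and partial summation, the paper obtains $O(N^{6+\ve})$; this is the case that drives the exponent.

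So your plan is sound in outline, but the paper's key simplification --- trading the Asai transfer of a dihedral representation for an automorphic induction from a quartic field --- is exactly what dissolves the obstacle you anticipated, and you should adopt it rather than pursue the Asai conductor route. A minor point: you restrict the field case to $K$ real quadratic by analogy with Section~\ref{ai}, but the paper treats the field case uniformly via the quartic $L$, without invoking any archimedean exclusion argument.
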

\begin{proof} Let us first consider the case when $K$ is not a field.
Let $(f_1,f_2)$ be a pair of two elliptic cusp forms of level $N_1,N_2$ with characters $\chi_1,\chi_2$ so that
$\chi_1\chi_2=\chi$ but one of them is a CM form (say, $f_2$). Note that once we fix an imaginary quadratic field defining
$f_2$, then the character of $f_2$ is fixed and so is $f_1$.
As in the previous proposition, we have $N_1N_2\le N^4$ by Theorem in Section 6.5 of \cite{BHK}. Therefore we have only to
consider
$$\sum_{N_1|N^4}\sum_{a^2D_K|N_1\atop K:\text{imaginary quadratic}}h(a^2 D_K)\cdot {\rm dim}S^1_{k_1}(\G_0(N^4N_1^{-1}),\ast),
$$
where $a^2$ is the square factor dividing $N_1$, and $\ast$ is a fixed character and $h(a^2D_K)$ stands for the class number of the binary forms with discriminant $a^2D_K$.

Now for a fixed $k_1$ and $N_1$, ${\rm dim}S^1_{k_1}(\G_0(N^4N_1^{-1})=O(N^4N_1^{-1})$, and
$$h(a^2D_K)=h_K a \prod_{p|a} (1-(\frac {D_K}p)p^{-1})\ll |D_K|^{\frac 12+\epsilon} a^{1+\epsilon}.
$$
Hence
$${\rm dim} \, S^{\text{Asai, non-E, gg}}_{\underline{k}}(\G(N),\chi)\ll
\sum_{N_1|N^4} N^4N_1^{-1}\sum_{a^2d|N_1} (a^2d)^{\frac{1}{2}+\ve} \ll N^4\sum_{N_1|N^4} N_1^{-\frac 12+\epsilon}\ll N^{4+\epsilon}.
$$

When $K$ is a field, then there exists a quartic field $L/\Q$ which contains $K$ such that $\pi_F$ is
weakly equivalent to $AI^L_\Q(\tau)$ for some Hecke character $\tau:\A^\times_L/L^\times \lra \C^\times$.
By comparing the conductor we see that $D_L N_{L/\Q}(c(\tau))=O(N^4)$ where $c(\tau)$ is the conductor of $\tau$.
Hence we need to count the number of such Hecke characters.
Given a quartic field of discriminant $d$, $N_{L/\Q}(c(\tau))=O(N^4 |d|^{-1})$.
Given a positive integer $n$, there are $O(\tau(n))$ integral ideals of norm $n$, where $\tau(n)$ is the number of divisors of $n$.
Given an ideal $\frak a$, there are at most $N(\frak a)h_L$ Hecke characters with the conductor $\frak a$.
Therefore, given a positive integer $n$, there are at most $n\tau(n)h_L\ll n^{1+\epsilon}D_L^{\frac 12+\epsilon}$ Hecke characters.
Hence the number of Hecke characters in question is
$$\ll \sum_{|D_L|<N^4} (N^4 |d_L|^{-1})^{1+\epsilon}D_L^{\frac 12+\epsilon}\ll N^{4+\epsilon}\sum_{|D_L|<N^4} |D_L|^{-\frac 12+\epsilon}.
$$
Let $N(d)$ be the number of quartic fields of discriminant $|d|$ which contains a quadratic subfield. Then $\sum_{|d|\leq x} N(d)\ll x$ (\cite{Co}). Hence by partial summation,
the last estimation  becomes $O(N^{6+\epsilon})$.
Hence our result follows.
\end{proof}

\subsection{Proofs of main theorems for Hecke fields}
We are now ready to prove main theorems. Before going into proofs we give a precise definition of genuine forms.
\begin{Def}\label{ge} Let $F$ be a Hecke eigen Siegel cusp form of weight $(k_1,k_2),\ k_1\ge k_2\ge 3$ which is neither a CAP form nor an endoscopic lift. We say $F$ is a genuine form if the
corresponding automorphic representation $\pi_F$ is not weakly equivalent to any of a base change lift, an Asai lift, and a symmetric cube lift.
\end{Def}
By using results in the previous section, Theorem 1.1 and 1.3 of \cite{KWY} hold if we replace
$HE_{\k}(N,\chi)$ with the subset $\{F\in HE_{\k}(N,\chi)\ |\ F:\text{genuine}\}$, since
\begin{eqnarray*}
&& \{F\in HE_{\k}(N,\chi)\ |\ F:\text{non-genuine}\} =
O({\rm dim\, Ker}(T^g))+{\rm dim} \, S^{{\rm Cube, gg}}_{\underline{k}}(\G(N),\chi) \\
&& \phantom{xxxxxxxxxxxxxxxxx} +{\rm dim} \, S^{{\rm AI,gg}}_{\underline{k}}(\G(N),\chi)+
{\rm dim} \, S^{\text{Asai,non-E,gg}}_{\underline{k}}(\G(N),\chi)= O(N^{8+\ve}).
\end{eqnarray*}
This proves Theorem \ref{hecke1}.

Theorem \ref{hecke2},\ref{hecke3} follow from the argument in Section 6.2 and 6.3 of \cite{ST1} with
Theorem 1.1 of \cite{KWY}.

\section{Galois representations for genuine forms}
In this section we characterize a genuine form in terms of Galois representations.
Let $F$ be a Hecke eigen Siegel cusp form in $S_{\k}(\G(N))$ which
is neither CAP nor endoscopic. By Laumon-Weissauer, for any prime $\ell$ there
exists a unique irreducible Galois representation
$$\rho_{F,\ell}:G_\Q:={\rm Gal}(\bQ/\Q)\lra {\rm GSp}_4(\bQ_\ell)$$
such that
$\det(I_4-X\rho_{F,\ell}({\rm Frob}_p))$ coincides with the Hecke polynomial at $p$ for any prime $p\nmid \ell N$
(see (2.13) of \cite{KWY} for Hecke polynomials).
Since $\pi_F$ is weakly equivalent to a generic cuspidal representation of $GSp_4$, it can be transfered to
a cuspidal representation of $GL_4$. The irreducibility of $\rho_{F,\ell}$ follows from this fact and
the main result of \cite{CG}.

\begin{thm}The notation being as above.
Then $F$ is genuine if and only if the Zariski closure of the image of $\rho_{F,\ell}$ contains $Sp_4$ for all but
finitely many $\ell$.
\end{thm}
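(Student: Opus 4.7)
The plan is to prove both directions by classifying proper reductive subgroups of $GSp_4$ that can contain the image of $\rho_{F,\ell}$, and then matching each possibility with a type of non-genuine form.

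For the direction $F$ non-genuine $\Rightarrow$ the Zariski closure $G_\ell$ of ${\rm Im}(\rho_{F,\ell})$ does not contain $Sp_4$: since $F$ is neither CAP nor endoscopic by hypothesis, the remaining non-genuine options are base change (automorphic induction from a real quadratic field, as $k_2\ge 3$ rules out the imaginary quadratic case by \cite{HST}), Asai lift, and symmetric cube lift. In the symmetric cube case $\rho_{F,\ell}\simeq {\rm Sym}^3\rho_{f,\ell}\otimes\chi$ for an elliptic newform $f$, so $G_\ell$ sits inside ${\rm Sym}^3(GL_2)\subset GSp_4$, which has dimension at most $4$. In the induced cases $\rho_{F,\ell}\simeq {\rm Ind}_{G_K}^{G_\Q}\sigma_\ell$ for a quadratic $K/\Q$, so $G_\ell$ lies in the normalizer of a $GL_2\times GL_2$-type Levi. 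In every case $G_\ell$ is properly contained in the $10$-dimensional simple group $Sp_4$ for all $\ell$ prime to the conductor.

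For the converse, assume $G_\ell\not\supset Sp_4$ for infinitely many $\ell$. By \cite{CG} each $\rho_{F,\ell}$ is irreducible in $GL_4(\bQ_\ell)$, so $G_\ell$ acts irreducibly on $V_\ell=\bQ_\ell^{4}$. Let $G_\ell^0$ be the identity component. If $G_\ell^0$ acts irreducibly on $V_\ell$, the classification (following Dynkin) of connected reductive subgroups of $Sp_4$ with an irreducible symplectic $4$-dimensional representation leaves only $Sp_4$ itself (excluded by assumption) or $SL_2$ embedded via ${\rm Sym}^3$. If instead $G_\ell^0$ acts reducibly on $V_\ell$, then the irreducibility of $G_\ell$ forces a decomposition $V_\ell=V_1\oplus V_2$ with the two $2$-dimensional summands permuted by an index-$2$ subgroup of $G_\ell$ (the decomposition into four lines is ruled out since it would force $\pi_F$ to be CAP or CM-type, contrary to hypothesis). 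Hence $\rho_{F,\ell}\simeq {\rm Ind}_{G_K}^{G_\Q}\sigma_\ell$ for some quadratic extension $K/\Q$ and $2$-dimensional representation $\sigma_\ell$ of $G_K$.

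To translate these $\ell$-adic structural statements into automorphic ones I would use $\ell$-independence of Frobenius traces along the compatible system $\{\rho_{F,\ell}\}$ together with strong multiplicity one on $GL_4$. The ${\rm Sym}^3$ case forces $\pi_F\simeq {\rm Sym}^3\pi_f$ for an elliptic cusp form $f$; the induced case forces $\pi_F$ to be either a base change from $GL_2/K$ with $K$ real quadratic, or an Asai transfer, depending on the central character structure. Either outcome contradicts $F$ being genuine.

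The main obstacle is the final translation step: passing from the $\ell$-adic picture to a globally defined cuspidal source form $\pi_f$ (on $GL_2$ or $GL_2/K$) such that $\pi_F$ is its functorial lift. This is not produced by the Galois picture alone and relies on Arthur's classification for $GSp_4$ (already invoked throughout this section) together with converse-theorem style input, plus verifying that the four-line degenerate subcase of $G_\ell^0$ reducible is genuinely incompatible with holomorphicity of $F$ at weight $\underline{k}=(k_1,k_2)$ with $k_2\ge 3$.
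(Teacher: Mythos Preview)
Your overall strategy for the hard direction---classify the possible Zariski closures $G_\ell$ not containing $Sp_4$ and match each to a non-genuine type---is the same as the paper's. The paper imports this classification ready-made from Calegari--Gee \cite{CG} (their Corollary 4.4 and Proposition 4.5) rather than arguing directly via Dynkin's list; either way one arrives at the same short menu: $Sp_4$, ${\rm Sym}^3 GL_2$, $GSO(2,2)$-type, induced from a quadratic field, or induced from a character. (Your dichotomy ``$G_\ell^0$ irreducible vs.\ reducible'' does not obviously capture the $GSO(2,2)$-type tensor-product case, where $G_\ell^0$ acts irreducibly on $V_\ell$ but is not $Sp_4$ or principal $SL_2$; citing \cite{CG} directly avoids this bookkeeping.)

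The genuine gap is the step you yourself flag as ``the main obstacle'': passing from the $\ell$-adic structural statement to an automorphic source. Your proposed tools---$\ell$-independence of Frobenius traces, strong multiplicity one on $GL_4$, Arthur's classification, converse theorems---do not produce the missing cusp form. Concretely, in the ${\rm Sym}^3$ case you have $\rho_{F,\ell}\simeq {\rm Sym}^3\sigma_\ell$ for some $2$-dimensional $\sigma_\ell$, and nothing in your toolkit tells you $\sigma_\ell$ is attached to an elliptic cusp form. The paper fills this with heavy modularity machinery: irreducibility of $\overline{\rho}_{F,\ell}$ for almost all $\ell$ forces $\overline{\sigma}_\ell$ irreducible with large image, so Serre's conjecture (Khare--Wintenberger \cite{KW}) gives modularity of $\overline{\sigma}_\ell$, and then modularity lifting (Kisin \cite{Kisin} in the ordinary case, Emerton \cite{Emerton} in the non-ordinary case) upgrades this to modularity of $\sigma_\ell$ itself. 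For the tensor-product/$GSO(4)$-type case the paper invokes the automorphy theorem of Liu--Yu \cite{LY} to conclude $\pi_F$ is a Rankin--Selberg product. These inputs are the actual content of the proof; without them the argument does not close.
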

\begin{proof}
Let $G_\ell$ be the Zariski closure of the image of $\rho_{F,\ell}$.
By Corollary 4.4 and Proposition 4.5 of \cite{CG} one of the following cases happens:
(1) $G_\ell$ contains $Sp_4$;
(2) $G_\ell$ is contained in the subgroup $GSO(2,2)=GL_2\times GL_2/GL_1$ of $GSp_4$;
(3) $G_\ell$ is contained in the subgroup ${\rm Sym}^3GL_2$ of $GL_4$;
(4) $\rho_{F,\ell}$ is an induced representation of 2-dimensional Galois representation associated to a
Hilbert modular form for a quadratic real field;
(5)  $\rho_{F,\ell}$ is an induced representation of a character.

By Theorem 8.7 for all but finitely many $\ell$, the reduction $\br_{F,\ell}$ is irreducible.
In the case (2), there exists a 2-dimensional irreducible Galois representation $\rho_\ell:G_\Q\lra GL_2(\bQ_\ell)$
such that $\rho_{F,\ell}={\rm Sym}^3\rho_\ell$. The irreducibility of $\br_{F,\ell}$ implies that of $\br_\ell$.
Therefore $\br_\ell$ is modular by Serre conjecture due to Khare-Wintenberger \cite{KW}.  It follows that its image
contains $SL_2(\mathbb{F}_\ell)$ otherwise ${\rm Sym}^3\rho_\ell$ is reducible. Hence $\rho$ is absolute irreducible even if
we restrict it to $G_{\Q(\zeta_\ell)}$.
Then by Kisin \cite{Kisin} for
the ordinary case and Emerton \cite{Emerton} for the non-ordinary case,
$\rho_p$ is modular. Hence the case (2) corresponds to a symmetric cubic lift.

We now assume that $F$ is genuine and $\rho_{F,\ell}$ does not contain $Sp_4$ for infinitely many $\ell$.
We denote by $\Sigma$ the set of such primes $\ell$.
The cases (3), (4) and (5) are excluded since $F$ is genuine with the argument above in the case (3).
Then by applying Theorem 1.0.1 of \cite{LY} for a sufficiently large $\ell\in \Sigma$ (note that they used the notation $SGO(4)$ instead of $GSO(4)$), we see that $F$ is a Rankin-Selberg convolution which contradicts with the assumption on $F$.
\end{proof}

\section{Simultaneous vertical Sato-Tate theorem}\label{sato-tate}

Let $F\in HE_{\underline{k}}(\Gamma(N),\chi)^{\rm tm}$.
For a prime $p\nmid N$, let $a_{F,p}, b_{F,p}\in [-2,2]$ be Hecke eigenvalues as in \cite{KWY}. Recall also the measure
$$\mu_p=f_p(x,y)g^+_p(x,y)g^-_p(x,y)\cdot \mu^{{\rm ST}}_{\infty}$$ on
$\Omega:=[-2,2]^2/\mathfrak{S}_2$ (the non-trivial element in $\mathfrak{S}_2$ acts by $(x,y)\mapsto (y,x)$ on $[-2,2]^2$), where
$$f_p(x,y)=\frac{(p+1)^2}{\left(\left(\sqrt{p}+\frac{1}{\sqrt{p}}\right)^2-x^2\right)
\left(\left(\sqrt{p}+\frac{1}{\sqrt{p}}\right)^2-y^2\right)},\
 \mu^{{\rm ST}}_{\infty}=\frac{(x-y)^2}{\pi^2}\sqrt{1-\frac{x^2}{4}}\sqrt{1-\frac{y^2}{4}},$$
$$g^{\pm}_p(x,y)=\frac{p+1}{\left(\sqrt{p}+\frac{1}{\sqrt{p}}\right)^2-2\left(1+\frac{xy}{4}\pm\sqrt{1-\frac{x^2}{4}}\sqrt{1-\frac{y^2}{4}}\right)}.
$$
Let $C^0(\Omega,\R)$ be the space of $\R$-valued continuous functions on $\Omega$.
Then we can generalize Theorem 1.4 of \cite{KWY} to finitely many primes.

\begin{thm}\label{Sato-Tate}
Let $p_1,...,p_r$ be distinct primes. Then
for $f\in C^0(\Omega^r,\Bbb R)$,
\begin{eqnarray*}
&&\frac 1{d^{\rm tm}_{\underline{k},N}(\chi)}\sum_{F\in HE_{\underline{k}}(\G(N),\chi)} f((a_{F,p_1},b_{F,p_1}),...,(a_{F,p_r},b_{F,p_r})) =\int_{\Omega^r} f((x_1,y_1),...,(x_r,y_r))\prod_{i=1}^r d\mu_{p_i} \\
&&\phantom{xxxxxxxxx} + \begin{cases} O((p_1\cdots p_r)^a \phi(N) N^{-2}), &\text{level aspect}\\
O\left(\frac {(p_1\cdots p_r)^b}{((k_1-k_2+1)(k_1-1)(k_2-2)}\right) + O\left(\frac {(p_1\cdots p_r)^c}{((k_1-k_2+1)(k_1+k_2-3)}\right), &\text{weight aspect}\end{cases}
\end{eqnarray*}
for some constants $a,b,c>0$.
\end{thm}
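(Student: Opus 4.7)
The plan is to deduce this multi-prime equidistribution from the single-prime equidistribution of Theorem 1.4 of \cite{KWY} by exploiting the fact that Hecke operators at distinct primes commute and that the geometric side of the trace formula factorizes over places. First, by Stone--Weierstrass the linear span of product test functions $f((x_1,y_1),\ldots,(x_r,y_r))=\prod_{i=1}^{r} f_i(x_i,y_i)$ is uniformly dense in $C^0(\Omega^r,\R)$, so after an effective polynomial approximation (tracked quantitatively as in Serre's approach to equidistribution) it suffices to treat product test functions. Moreover, since the characters $\chi_{\tau_{m,n}}$ of irreducible representations of $\mathrm{USp}(4)$ form a topological basis of the continuous class functions on $\Omega$, I would further reduce to test functions of the form $\prod_{i=1}^{r} \chi_{\tau_{m_i,n_i}}(x_i,y_i)$.

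For such a product character, the Satake isomorphism identifies $\chi_{\tau_{m_i,n_i}}(a_{F,p_i},b_{F,p_i})$ with the eigenvalue of $F$ under an explicit element $\phi_{m_i,n_i}$ in the spherical Hecke algebra $\mathcal{H}(\mathrm{GSp}_4(\Q_{p_i})/\!\!/\mathrm{GSp}_4(\Z_{p_i}))$. Since $p_1,\ldots,p_r$ are distinct, the Hecke operator $\phi:=\bigotimes_{i=1}^{r}\phi_{m_i,n_i}$ (tensored with the unit at all other finite places outside $N$) is a pure tensor, and I would feed $\phi$ together with an appropriate archimedean pseudocoefficient selecting weight $\underline{k}$ into the same Arthur--Selberg trace formula used in the proof of Theorem 1.4 of \cite{KWY}. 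After subtracting the CAP, endoscopic and non-tempered contributions, which are negligible by Propositions \ref{cap} and \ref{end}, the left-hand side becomes $\frac{1}{d^{\mathrm{tm}}_{\underline{k},N}(\chi)}\sum_F \lambda_F(\phi)$ up to an admissible error.

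The central (identity) contribution on the geometric side is multiplicative over the places $p_i$, and by the local Plancherel formula for unramified tempered representations of $\mathrm{GSp}_4(\Q_{p_i})$ it equals $d^{\mathrm{tm}}_{\underline{k},N}(\chi)\cdot\prod_{i=1}^{r}\int_{\Omega}\chi_{\tau_{m_i,n_i}}(x,y)\,d\mu_{p_i}$, where the local density $d\mu_{p_i}$ is precisely what appears in the single-prime statement. This matches exactly the product integral on the right-hand side of the theorem, confirming that the main term has the correct product form $\prod_i d\mu_{p_i}$.

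The main obstacle is controlling the multi-prime error term. Each place $p_i$ should contribute a local factor $p_i^{a}$ coming from the bounds on non-identity orbital integrals and on the Plancherel density, and these factors need to combine multiplicatively across the $r$ primes to give the announced $(p_1\cdots p_r)^a$. Making this combinatorially clean requires that the local bounds at $p_i$ are uniform in the data at the other primes, which is precisely the type of statement proved in Shin--Templier \cite{ST1}. Once this factorization of errors is in place, the level and weight aspect estimates follow from the same global volume bound and archimedean pseudocoefficient estimates already used in \cite{KWY}, combined with the effective polynomial approximation from the first step, which controls how the error grows with the bidegrees $(m_i,n_i)$ chosen to approximate the original continuous $f$.
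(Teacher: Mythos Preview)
Your approach is essentially the same as the paper's: both feed a product test function at the finitely many primes $p_1,\ldots,p_r$ into the trace formula, identify the main term via the local Plancherel/Sato--Tate measure at each place, and bound the remaining geometric contributions multiplicatively in the $p_i$. The paper's proof is a one-line reference: it invokes Proposition~5.3 of \cite{KWY} with $S'=\{p_1,\ldots,p_r\}$ (which already allows an arbitrary finite set of unramified places in the test function) and then says the argument of Theorem~1.3 of \cite{KWY} goes through verbatim using the error estimates of Theorems~6.4 and~6.5 of \cite{KWY}. Your write-up simply unpacks what those references contain. One small correction: the uniform local bounds you need are exactly Theorems~6.4 and~6.5 of \cite{KWY} (which in turn rest on \cite{ST}, the Sato--Tate paper of Shin--Templier), not \cite{ST1}, which is their paper on fields of rationality and is used elsewhere in this article for the Hecke-field results.
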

\begin{proof} Take $S'=\{p_1,...,p_r\}$ in Proposition 5.3 of \cite{KWY}. Then we can follow along the proof of Theorem 1.3 of \cite{KWY} by using
Theorem 6.4 and 6.5 of \cite{KWY}.
\end{proof}

\section{$n$-level density of degree 4 spinor $L$-functions}\label{spinor}
Katz and Sarnak \cite{KS1} proposed a conjecture on low-lying zeros of $L$-functions in natural families, which says that the distributions of the low-lying zeros of $L$-functions in a family $\frak{F}$ is predicted by a symmetry type $G(\frak{F})$ attached to
$\frak{F}$: For a given entire $L$-function $L(s,\pi)$, we denote the non-trivial zeros of $L(s,\pi)$ by $\frac 12 + \sqrt{-1}\gamma_j$. Since we don't assume GRH for $L(s,\pi)$, $\gamma_j$ can be a complex number. Let $\phi(x_1,...,x_n)=\prod_{i=1}^n \phi_i(x_i)$ be an even Schwartz class function in each variables whose Fourier transform $\hat\phi(u_1,...,u_n)$
is compactly supported. We define
\begin{equation*}
D^{(n)}(\pi, \phi)
={\sum}_{j_1,\cdots,j_n}^*\phi\left(\gamma_{j_1}\frac{\log c_\pi}{2 \pi},\gamma_{j_2}\frac{\log c_\pi}{2 \pi},\dots,\gamma_{j_n}\frac{\log c_\pi}{2 \pi}\right)
\end{equation*}
where $\sum_{j_1,...,j_n}^*$ is over $j_i \in \Z$ (if the root number is $-1$) or $\Z  \backslash \{0\}$  with $j_{a}\ne \pm j_{b}$ for $a\ne b$, and $c_{\pi}$ is the analytic conductor of $L(s,\pi)$.

Let $\frak{F}(X)$ be the set of $L$-functions in $\frak F$ such that $X< c_{\pi}< 2X$. The $n$-level density conjecture says that
\begin{equation*}
\lim_{ X\to \infty} \frac{1}{|\frak{F}(X)|} \sum_{\pi \in \frak{F}(X)}D^{(n)}(\pi,\phi)=
\int_{\Bbb R^n} \phi(x)W(G(\frak{F}))\, dx,
\end{equation*}
where $W(G(\frak{F}))$ is the $n$-level density function.

There are five possible symmetry types of families of $L$-functions: U, SO(even), SO(odd), O, and Sp. The corresponding density functions
$W(G)$ are determined in \cite{KS1} (cf. \cite{Rub}). They are
\begin{eqnarray*}
&&W(\text{U})(x)=\text{det}(K_0(x_j,x_k))_{1\leq j\leq n\atop 1\leq k\leq n},\\
&& W(\text{SO(even)})(x) = \text{det}(K_1(x_j,x_k))_{1\leq j\leq n\atop 1\leq k\leq n},\\
&&W(\text{SO(odd)})(x)= \text{det}(K_{-1}(x_j,x_k))_{1\leq j\leq n\atop 1\leq k\leq n}+\sum_{\nu=1}^n \delta(x_\nu) \text{det}(K_{-1}(x_j,x_k))_{1\leq j\ne\nu\leq n\atop 1\leq k\ne \nu\leq n}, \\
&&W(\text{Sp})(x) = \text{det}(K_{-1}(x_j,x_k))_{1\leq j\leq n\atop 1\leq k\leq n},\quad W(\text{O})(x)=\frac 12 (W(\text{SO(even)})(x)+W(\text{SO(odd)})(x)),
\end{eqnarray*}
where $K_\epsilon(x,y)=\dfrac {\sin \pi(x-y)}{\pi(x-y)}+\epsilon \dfrac {\sin \pi(x+y)}{\pi(x+y)}$, $\epsilon\in \{\pm 1, 0\}$.

We will study the family of degree 4 spinor $L$-functions $L(s,\pi_F,{\rm Spin})$ for $F\in HE_{\underline{k}}(N)$.
We may assume that $\pi_F$ is not a CAP form.
(For a CAP form, $|a_F(p)|\leq 4p^{\frac 12}$ and $|a_F(p^2)|\leq 4p$ in (\ref{explicit}). Hence if the support of $\phi$ is smaller than $(-1,1)$, then the sum over $p$ in (\ref{explicit}) is $O(N)$. But the dimension of the space of CAP forms is $O(N^{7+\epsilon})$. Hence it is negligible.)

Let $L(s,\pi_F, \text{Spin})$ be the degree 4 spinor $L$-function. Let
$$L(s,\pi_F,\text{Spin})=\sum_{n=1}^\infty \tilde\lambda_F(n) n^{-s}.
$$
It satisfies the functional equation:
$$\Lambda(s,\pi_F,{\rm Spin})=q(F)^{\frac s2} \Gamma_\Bbb C(s+\frac {k_1+k_2-3}2)\Gamma_\Bbb C(s+\frac {k_1-k_2+1}2)L(s,\pi_F,{\rm Spin}),
$$
$$\Lambda(s,\pi_F,{\rm Spin})=\epsilon(\pi_F)\Lambda(1-s,\pi_F,{\rm Spin}),
$$
where $\epsilon(\pi_F)\in\{\pm 1\}$ and $N\le q(F)\le N^4$.

Let $\phi$ be a Schwartz function which is even and whose Fourier transform has a compact support.
Define
\begin{equation*}
D(\pi_F,\phi,{\rm Spin}) = \sum_{\gamma_{F}}\phi\left( \frac{\gamma_{F}}{2\pi} \log c_{\underline{k},N}\right),
\end{equation*}
where $\log c_{\underline{k},N}=\frac 1{d_{\underline{k},N}} \sum_{F\in HE_{\underline{k}}(N)} \log c(F)$ for $\underline{k}=(k_1,k_2)$, and $c(F)=(k_1+k_2)^2(k_1-k_2+1)^2 q(F)$ is the analytic conductor. We showed in \cite{KWY} that

$$\frac 1{d_{\underline{k},N}} \sum_{F\in HE_{\underline{k}}(N)}  D(\pi_F,\phi,{\rm Spin})=\widehat{\phi}(0)+\frac 12 \phi(0)
+O\left( \frac{\omega(N)}{\log c_{\underline{k},N}} \right),
$$
where $\omega(N)$ is the number of distinct prime factors of $N$.
So the possible symmetry type could be O, SO(even) or SO(odd) but we cannot distinguish them because the support of $\hat{\phi}$ is too small. In order to distinguish them, we need to compute the $n$-level density.

Let $\lambda_F(p^n)$ be the eigenvalue of the Hecke operator $T(p^n)$ for $p\nmid N$. Here
$T(p^n)$ is the sum of Hecke operators $T_m$ for $m={\rm text}(p^{a_1},p^{a_2},p^{-a_1+\kappa},p^{-a_2+\kappa})$, where $0\leq a_2\leq a_1\leq \kappa$.

Let $T'(p^n)=T(p^n)p^{-n\frac {k_1+k_2-3}2}$. Then
$$L(s,\pi_F,\text{Spin})=L(s-\tfrac {k_1+k_2-3}2,F,\text{Spin}),
$$
and $L(s,F,\text{Spin})=\prod Q_{F,p}(p^{-s})^{-1}$,
$$Q_{F,p}(t)^{-1}=(1-p^{k_1+k_2-4}t^2)^{-1}\sum_{n=0}^\infty \lambda_F(p^n) t^n.
$$

Therefore
$$L(s,\pi_F,\text{Spin})=\prod Q'_{F,p}(p^{-s})^{-1},
$$
where
$$Q'_{F,p}(t)^{-1}=(1-p^{-1}t^2)^{-1}\sum_{n=0}^\infty \lambda'_F(p^n) t^n.
$$

Hence $\tilde\lambda_F'(p)=\lambda'_F(p)$, and
$$\tilde\lambda_F'(p^n)=\begin{cases} p^{-\frac n2}+\displaystyle\sum_{i=1}^{\frac n2} p^{-\frac n2+i}\lambda_F'(p^{2i}), &\text{if $n$ is even}\\
\displaystyle\sum_{i=0}^{\frac {n-1}2} p^{-\frac {n-1}2+i}\lambda_F'(p^{2i+1}), &\text{if $n$ is odd}\\
\end{cases}
$$

Therefore by \cite[Theorem 8.1]{KWY}, we have the following. Note that $T'(p^{2i})$ is a sum of Hecke operators and only $T'_{p^iE_4}$ contributes the main term $p^{-3i}$.

\begin{thm}\label{spin}
Put $\underline{k}=(k_1,k_2),\ k_1\ge k_2\ge 3$.
\begin{enumerate}
\item (level-aspect) Fix $k_1,k_2$. Then as $N\to\infty$,
$$\frac{1}{d_{\underline{k},N}}\sum_{F\in HE_{\underline{k}}(N)} \tilde\lambda_F(p^n) = \begin{cases} \sum_{i=0}^{\frac n2} p^{-\frac n2-2i}+O(N^{-2}p^c), &\text{if $n$ is even}\\
O(N^{-2}p^c), &\text{if $n$ is odd}\end{cases},
$$
for some constant $c>0$.
\item (weight-aspect) Fix $N$. Then as $k_1+k_2\to\infty$,
$$\frac{1}{d_{\underline{k},N}}\sum_{F\in HE_{\underline{k}}(N)} \tilde\lambda_F(p^n) =\begin{cases}
\sum_{i=0}^{\frac n2} p^{-\frac n2-2i} +O\left(\frac {p^c}{(k_1-1)(k_2-2)}\right)+ O\left(\frac {p^{-d}}{(k_1-k_2+1)(k_1+k_2-3)}\right), &\text{if $n$ is even}\\
O\left(\frac {p^c}{(k_1-1)(k_2-2)}\right)+ O\left(\frac {p^{-d}}{(k_1-k_2+1)(k_1+k_2-3)}\right), &\text{if $n$ is odd}\end{cases},
$$
for some constants $c,d>0$.
\end{enumerate}
\end{thm}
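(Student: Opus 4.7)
The plan is to reduce Theorem \ref{spin} directly to the trace formula estimates \cite[Theorem 8.1]{KWY} via the Euler-factor identity displayed just above. Concretely, from
$$\sum_{n\ge 0}\tilde\lambda_F(p^n)\,t^n=(1-p^{-1}t^2)^{-1}\sum_{n\ge 0}\lambda'_F(p^n)\,t^n,$$
expanding the geometric series recovers the closed form $\tilde\lambda_F(p^n)=p^{-n/2}+\sum_{i=1}^{n/2}p^{-n/2+i}\lambda'_F(p^{2i})$ when $n$ is even, and $\tilde\lambda_F(p^n)=\sum_{i=0}^{(n-1)/2}p^{-(n-1)/2+i}\lambda'_F(p^{2i+1})$ when $n$ is odd. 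Thus it is enough, for every $k$ of the same parity as $n$ with $k\le n$, to evaluate $d_{\underline{k},N}^{-1}\sum_{F\in HE_{\underline{k}}(N)}\lambda'_F(p^k)$ with the correct main term and a uniform error polynomial in $p$.

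The second step is to invoke \cite[Theorem 8.1]{KWY} summand by summand. Recall that $T'(p^k)$ is a sum of Hecke operators $T'_m$ indexed by $m=\mathrm{diag}(p^{a_1},p^{a_2},p^{-a_1+\kappa},p^{-a_2+\kappa})$ with $0\le a_2\le a_1\le \kappa$; as flagged in the paragraph preceding the statement, only the central class $m=p^{k/2}E_4$ (which can occur only when $k$ is even) contributes a non-vanishing geometric main term under the trace formula, namely $p^{-3k/2}$ after the normalization built into $\lambda'_F$. Every other orbital contribution (identity plus unipotent plus elliptic non-central) is absorbed into the bound $O(N^{-2}p^{c_0k})$ in the level aspect, and into the two-term weight-aspect bound
$$O\!\left(\frac{p^{c_1 k}}{(k_1-1)(k_2-2)}\right)+O\!\left(\frac{p^{-d_1 k}}{(k_1-k_2+1)(k_1+k_2-3)}\right),$$
with constants $c_0,c_1,d_1$ depending only on the fixed data.

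The third step is just to combine these ingredients. For $n$ even, the accumulated main term is $p^{-n/2}+\sum_{i=1}^{n/2}p^{-n/2+i}\cdot p^{-3i}=\sum_{i=0}^{n/2}p^{-n/2-2i}$, which agrees with the statement; for $n$ odd every summand is an average of $\lambda'_F(p^{2i+1})$, for which no central class exists and thus no main term survives. The prefactors $p^{-n/2+i}$ are uniformly bounded in $i\le n/2$ by a polynomial in $p$, so summing at most $\lfloor n/2\rfloor+1$ error terms of the shapes above only inflates the constants to some new $c,d>0$ and preserves the form of the bound.

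The main obstacle in carrying out the plan is not conceptual but arithmetic bookkeeping: one must verify that within the decomposition of $T'(p^k)$ the central operator $T'_{p^{k/2}E_4}$ is the unique contributor to the main term (which is precisely where the $p^{-3k/2}$ arises from $d_{\underline{k},N}^{-1}$ acting on a scalar eigenvalue) and that all other orbital integrals --- in particular the unipotent terms at $p$, whose Fourier analysis in \cite[\S 8]{KWY} governs the exponent $c_0$ --- genuinely fit inside the error budget of \cite[Theorem 8.1]{KWY} with an exponent growing only polynomially in $k$. Once this is granted, the telescoping of the prefactors $p^{-n/2+i}$ against the $p^{-3i}$ main terms is immediate and both (1) and (2) of Theorem \ref{spin} drop out.
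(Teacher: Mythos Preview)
Your proposal is correct and follows essentially the same approach as the paper: reduce to the averages of $\lambda'_F(p^{k})$ via the Euler-factor identity, then apply \cite[Theorem 8.1]{KWY} termwise, noting that only the central summand $T'_{p^{k/2}E_4}$ contributes the main term $p^{-3k/2}$. The paper's proof is literally the one-line remark preceding the theorem statement, and your write-up simply unpacks the bookkeeping (the telescoping $p^{-n/2+i}\cdot p^{-3i}=p^{-n/2-2i}$ and the absorption of finitely many error terms) that the paper leaves implicit.
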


Since $\tilde\lambda_F$ is multiplicative, we have proved Theorem \ref{sum-formula}.

For one-level density, the root number $\epsilon(\pi_F)$ did not play a role. However, higher level density depends on the root number.

Let $S^{\pm}_{\underline{k}}(N)$ be the subspace of $S_{\underline{k}}(N)$ with the root number $\epsilon(\pi_F)=\pm 1$.
Let $HE^{\pm}_{\underline{k}}(N)$ be a basis of $S^\pm_{\underline{k}}(N)$ consisting of Hecke eigenforms outside $N$, and
denote $|HE^{\pm}_{\underline{k}}(N)|=d^\pm_{\underline{k},N}$.

When $N=1$ (i.e., level one case), we have $\epsilon(\pi_F)=(-1)^{k_2}$ (\cite{Sch}). (In this case, $k_1-k_2$ should be even.)
Hence $HE^+_{\underline{k}}(1)=HE_{\underline{k}}(1)$ when $k_2$ is even, and
$HE^-_{\underline{k}}(1)=HE_{\underline{k}}(1)$ when $k_2$ is odd.

However, when $N$ is large, we expect

\begin{Conj}\label{conj}
$d^\pm_{\underline{k},N}=\frac 12 d_{\underline{k},N}+O(N^{9-\epsilon})$,
and Theorem \ref{sum-formula} is true for each subspace ${HE}^\pm_{\underline{k}}(N)$, namely, for example, in level aspect, ($m=\prod_p p^{v_p(m)}$)
$$\frac{1}{d^+_{\underline{k},N}}\sum_{F\in HE^+_{\underline{k}}(N)} \tilde\lambda_F(m) =\delta_\square m^{-\frac 12}\prod_{p|m} (1+p^{-2}+\cdots+p^{-v_p(m)}) +O(N^{-2}m^{c}),
$$
\end{Conj}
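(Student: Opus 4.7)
Both parts of the conjecture reduce to a single sign-twisted moment. Setting
$$\Sigma_m := \sum_{F\in HE_{\underline{k}}(N)} \tilde\lambda_F(m), \qquad
\Sigma^{\epsilon}_m := \sum_{F\in HE_{\underline{k}}(N)} \epsilon(\pi_F)\,\tilde\lambda_F(m),$$
one has $\sum_{F\in HE^{\pm}_{\underline{k}}(N)} \tilde\lambda_F(m) = \tfrac{1}{2}(\Sigma_m \pm \Sigma^{\epsilon}_m)$, so, since $\Sigma_m$ is controlled by Theorem~\ref{sum-formula}, the whole conjecture is equivalent to the bound $\Sigma^{\epsilon}_m = O(N^{9-\ve}\,m^c)$ uniformly for $m \le N^A$ (with an analogous weight-aspect bound). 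Taking $m=1$ already yields the dimension-counting statement.

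My plan is to realize $\epsilon(\pi_F)$ as the eigenvalue of an explicit linear operator $W_N$ on $S_{\underline{k}}(\G(N))$ and then insert $W_N$ into the Eichler--Arthur trace-formula computation of \cite{KWY}. Write $\epsilon(\pi_F)=\epsilon_\infty(\underline{k})\prod_{p\mid N}\epsilon_p(\pi_{F,p})$; the archimedean factor depends only on the weight, and the unramified local root numbers at $p\nmid N$ are trivial. At each $p\mid N$, Corollary~\ref{packet} pins down the local $L$-packets on the non-CAP, non-endoscopic part, and the finite list of local types occurring there (VIa/VIb, VIIIa/VIIIb, Va, XIa, and theta lifts) is short enough that the local epsilon factor admits an explicit expression as a character of the local bi-invariant Hecke algebra $\mathcal{H}(GSp_4(\Q_p)/\!/K(p^r))$. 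Packaging these local ingredients into a single operator $W_N$ generalizing the paramodular Fricke involution (which, as indicated in the introduction, handles the paramodular case later in this paper), the sum $\Sigma^{\epsilon}_m$ becomes the trace of $W_N\circ T(m)$ on $S_{\underline{k}}(\G(N))$.

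The remaining task is to estimate this twisted trace by running Arthur's trace formula for $GSp_4$ against a test function supported on $W_N K(N)\cdot T(m)$ at finite places. The key mechanism is that the identity orbital integral, which produced the leading term of size $N^9$ for the untwisted trace in \cite{KWY}, is forced by the $W_N$-twist onto non-central conjugacy classes, and should contribute only $O(N^{9-\ve})$. The residual contributions from CAP and endoscopic forms are already bounded by $O(kN^{7+\ve})$ and $O((k_1-k_2+1)(k_1+k_2-3)N^{8+\ve})$ via Propositions~\ref{cap} and~\ref{end}, and on those subspaces the root number factors through lower-rank automorphic root numbers whose signed averages can be estimated directly from the known theory on $GL_2\times GL_2$.

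The main obstacle lies in constructing $W_N$ at principal congruence level. Unlike the paramodular situation, at $\G(N)$ level the local representations at $p\mid N$ of bounded depth can exhibit different local root numbers within a single parahoric-isotypical piece, so $W_N$ will not act diagonally on $S_{\underline{k}}(\G(N))$ but only after decomposition along the non-canonical generic transfer $T^g$ of \eqref{gg}. Passing to the globally generic side, where Whittaker models give a clean Jacquet-integral definition of $\epsilon_p(\pi_{F,p})$, should make $W_N$ manageable; however, extending the old/new dictionary of Roberts--Schmidt beyond paramodular level (in particular, matching $W_N$ to a trace-formula test function with enough Plancherel cancellation, as in the approach of Shin--Templier \cite{ST1}) is where the real technical work will be, and is the step I would expect to demand the most effort.
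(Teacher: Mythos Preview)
The paper does not prove this statement: it is stated as a \emph{conjecture} and, immediately after, the authors write ``We assume Conjecture~\ref{conj} in this paper.'' The only case in which the paper establishes it is for paramodular newforms (Section~7), where the Atkin--Lehner element $w_{N,N}$ furnishes exactly the operator you call $W_N$: there one has the clean identity
\[
\tr\bigl(T'_m\mid S^{\mathrm{non\text{-}C,new},\pm}_{\underline{k}}(\Gamma^{\mathrm{para}}(N))\bigr)
=\tfrac12\Bigl[\tr(T'_m\mid S^{\mathrm{non\text{-}C,new}}_{\underline{k}})\pm(-1)^{k_2}\tr(T'_m w_{N,N}\mid S^{\mathrm{non\text{-}C}}_{\underline{k}})\Bigr],
\]
and the twisted trace is then bounded via Theorem~\ref{estimate}. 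So your overall plan (realize $\epsilon(\pi_F)$ as an operator, insert it into the trace formula, observe that the twist kills the central orbital integral) is precisely the mechanism the paper uses in the paramodular case.

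What you should be aware of is that the step you flag as ``the real technical work'' --- building $W_N$ at principal congruence level $\Gamma(N)$ and matching it to a trace-formula test function --- is not a detail the paper carries out and then omits; it is the reason the statement is left as a conjecture. At $\Gamma(N)$-level the local fixed spaces $\pi_p^{K(p^r)}$ are typically high-dimensional and the local root number is not the eigenvalue of any element of the spherical or Iwahori Hecke algebra, so there is no candidate for $W_N$ in $\mathcal{H}(GSp_4(\Q_p)/\!/K(p^r))$ in general. Your suggestion to pass through the generic transfer $T^g$ does not obviously help, since $T^g$ is only defined form-by-form (and non-canonically), not as a linear operator whose trace one could compute. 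In short, your reduction to bounding $\Sigma^\epsilon_m$ is correct and your diagnosis of the obstruction is accurate, but the proposal does not close the gap --- nor does the paper.
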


We assume Conjecture \ref{conj} in this paper.

The calculation of the $n$-level density is well-known.
But for the sake of completeness, we give an outline. We follow closely \cite{CK2}, \cite{Rub}.

\subsection{The case $\epsilon( \pi_F)= 1$}

We denote the non-trivial zeros of $L(s, \pi_F, Spin)$ by $\sigma_{F,i}=\frac{1}{2}+\sqrt{-1} \gamma_{F,i}$. Without assuming the GRH for $L(s,\pi_F, \text{Spin})$, we can order them as
\begin{equation*}
\cdots \leq {\rm Re}({\gamma_{F,-2}}) \leq {\rm Re}({\gamma_{F,-1}}) \leq 0 \leq {\rm Re}({\gamma_{F,1}}) \leq {\rm Re}({\gamma_{F,2}}) \leq \cdots.
\end{equation*}

Let $\phi(x_1,...,x_n)=\phi_1(x_1)\cdots \phi_n(x_n)$, where each $\phi_i$ is an even Schwartz function and $\hat \phi(u_1,...,u_n)=\hat \phi_1(u_1)\cdots \hat\phi_n(u_n)$.
For a fixed $n>0$, assume the Fourier transform $\hat{\phi_i}$ of $\phi_i$ is supported in $(-\beta_n,\beta_n)$ for $i=1,\dots,n$.
($\beta_n<1$ can be explicitly determined.)
The $n$-level density function is
\begin{equation}\label{n-level}
D^{(n)}(\pi_F, \phi,{\rm Spin})
={\sum}_{j_1,\cdots,j_n}^*\phi\left(\gamma_{j_1}\frac{\log c_{\underline{k},N}}{2 \pi},\gamma_{j_2}\frac{\log c_{\underline{k},N}}{2 \pi},\dots,\gamma_{j_n}\frac{\log c_{\underline{k},N}}{2 \pi}\right)
\end{equation}
where $\sum_{j_1,...,j_n}^*$ is over $j_i=\pm 1,\pm 2,...$ with $j_{a}\ne \pm j_{b}$ for $a\ne b$, and
$\log c_{\underline{k},N}=\frac 1{d^+_{\underline{k},N}} \sum_{F\in HE^+_{\underline{k}}(N)} \log c(F)$ for $\underline{k}=(k_1,k_2)$, and $c(F)=(k_1+k_2)^2(k_1-k_2+1)^2 q(F)$ is the analytic conductor. Let
$$-\frac {L'}L(s,\pi_F,\text{Spin})=\sum_{n=1}^\infty \Lambda(n)a_F(n) n^{-s}.
$$
Recall the one-level density function (\cite{KWY}, page 68):

\begin{eqnarray}\label{explicit}
&& \frac 1{d_{\underline{k},N}} \sum_{F\in HE_{\underline{k}}(N)} D^{(1)}(\pi_F, \phi,{\rm Spin})=\widehat{\phi}(0)-
\frac {2}{d_{\underline{k},N}\log c_{\underline{k},N}} \sum_{F\in HE_{\underline{k}}(N)}\sum_p \frac{a_{F}(p)}{\sqrt{p}}\widehat{\phi}\left( \frac{\log p}{\log c_{\underline{k},N}}\right)\\
&&\phantom{xxxxxxxxxxxxxx} -\frac{2}{d_{\underline{k},N}\log c_{\underline{k},N}} \sum_{F\in HE_{\underline{k}}(N)}\sum_p \frac{a_{F}(p^2)}{p}\widehat{\phi}\left( \frac{2\log p}{\log c_{\underline{k},N}}\right)+O\left( \frac{1}{\log c_{\underline{k},N}} \right). \nonumber
\end{eqnarray}

Let $\underline{L}$ run over all ways of decomposing $\{1,...,n\}$ into disjoint subsets $[L_1,...,L_{\nu}]$. Let $\nu=\nu(\underline{L})$. For each $l=1,...,\nu$, let $\Phi_l=\prod_{i\in S_l} \phi_i$.

By Rubinstein \cite{Rub},
\begin{eqnarray} \label{n-level}
&&\frac{1}{d^+_{\underline{k},N}}\sum_{F \in HE^+_{\underline{k},N}} D^{(n)}(\pi_F, \phi,{\rm Spin}) = \frac{1}{d^+_{\underline{k},N}} \sum_{F \in HE^+_{\underline{k},N}}
\sum_{\underline{L}}(-2)^{n-\nu(\underline{L})}\\
                             &\cdot& \prod_{l=1}^{\nu(\underline{L})}\left( |L_l|-1 \right)! \left( \int_{\R} \Phi_l(x)dx -\frac{2}{\log R} \Sigma_{l,1}(\Phi_l) - \frac{2}{\log c_{\underline{k},N}} \Sigma_{l,2}(\Phi_l) + O\left( \frac{1}{\log c_{\underline{k},N}} \right) \right),  \nonumber
\end{eqnarray}
where
\begin{eqnarray*}
\Sigma_{l,1}(\Phi_l) &=& \sum_{p}\frac{a_{F}(p)\log p}{\sqrt{p}}\widehat{\Phi}_l \left( \frac{\log p}{\log c_{\underline{k},N}}\right),\\
\Sigma_{l,2}(\Phi_l) &=& \sum_{p}\frac{a_{F}(p^2)\log p}{p}\widehat{\Phi}_l \left( \frac{2 \log p}{\log c_{\underline{k},N}}\right).
\end{eqnarray*}

Note that
$$\int_{\R} \Phi_l(x)dx -\frac{2}{\log c_{\underline{k},N}} \Sigma_{l,1}(\Phi_l) - \frac{2}{\log c_{\underline{k},N}} \Sigma_{l,2}(\Phi_l)= D^{(1)}(\pi_F, \Phi_l)  + O\left( \frac{1}{\log c_{\underline{k},N}}\right).
$$
We show, following Lemma 2 in \cite{Rub}, that we can ignore $O\left( \frac{1}{\log c_{\underline{k},N}} \right)$ terms in
$(\ref{n-level})$.

\begin{lem}
\begin{eqnarray*} \label{removing-tail}
&& \frac{1}{d^+_{\underline{k},N}}\sum_{F \in HE^+_{\underline{k},N}} \prod_{l=1}^{\nu(\underline{L})}\left(\int_{\R} \Phi_l(x)dx -\frac{2}{\log c_{\underline{k},N}} \Sigma_{l,1}(\Phi_l) - \frac{2}{\log c_{\underline{k},N}} \Sigma_{l,2}(\Phi_l) \right)
\\
&=& \frac{1}{d^+_{\underline{k},N}}\sum_{F \in HE^+_{\underline{k},N}} \prod_{l=1}^{\nu(\underline{L})}\left(D^{(1)}(\pi_F, \Phi_l ) \right)+  O\left( \frac{1}{\log c_{\underline{k},N}} \right). \nonumber
\end{eqnarray*}
\end{lem}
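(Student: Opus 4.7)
The plan is to promote the pointwise, uniform-in-$F$ approximation to an averaged statement by telescoping the product, and then to control the remaining $F$-averages of products via the Hecke sum formula (Theorem \ref{sum-formula}, or its $HE^{\pm}$-version supplied by Conjecture \ref{conj}).

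First, the explicit formula for $L(s,\pi_F,\mathrm{Spin})$ together with the functional equation gives, exactly as in the derivation of \eqref{explicit}, the pointwise identity
$$D^{(1)}(\pi_F,\Phi_l)=\int_{\R}\Phi_l(x)\,dx-\frac{2}{\log c_{\underline{k},N}}\Sigma_{l,1}(\Phi_l)-\frac{2}{\log c_{\underline{k},N}}\Sigma_{l,2}(\Phi_l)+O\!\left(\frac{1}{\log c_{\underline{k},N}}\right),$$
with implied constant depending only on $\phi_1,\dots,\phi_n$. Writing $a_l(F)$ for the bracketed quantity on the left-hand side of the lemma and $b_l(F):=D^{(1)}(\pi_F,\Phi_l)$, this reads $a_l(F)=b_l(F)+E_l(F)$ with $|E_l(F)|\ll(\log c_{\underline{k},N})^{-1}$ uniformly in $F$. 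The standard telescoping identity
$$\prod_{l=1}^{\nu}a_l(F)-\prod_{l=1}^{\nu}b_l(F)=\sum_{m=1}^{\nu}E_m(F)\prod_{l<m}a_l(F)\prod_{l>m}b_l(F)$$
then reduces the lemma to the uniform bound $\tfrac{1}{d^+_{\underline{k},N}}\sum_{F}\prod_{l\ne m}|c_l(F)|=O(1)$ for any choice $c_l\in\{a_l,b_l\}$; since $|b_l|\le|a_l|+O((\log c_{\underline{k},N})^{-1})$ I may as well take each $c_l=a_l$.

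Second, I expand each $a_l(F)$ using its definition. Up to constants depending only on $\phi$, every summand of $\prod_{l\neq m}a_l(F)$ has the shape
$$\frac{1}{(\log c_{\underline{k},N})^{r}}\sum_{\substack{p_1,\dots,p_r \\ p_i<c_{\underline{k},N}^{\beta_n}}}\frac{a_F(p_1^{e_1})\cdots a_F(p_r^{e_r})(\log p_1)\cdots(\log p_r)}{p_1^{e_1/2}\cdots p_r^{e_r/2}}\widehat{\Phi}_{l_1}\!\left(\tfrac{e_1\log p_1}{\log c_{\underline{k},N}}\right)\cdots\widehat{\Phi}_{l_r}\!\left(\tfrac{e_r\log p_r}{\log c_{\underline{k},N}}\right),$$
with $r\le\nu-1$ and $e_i\in\{1,2\}$. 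By Hecke multiplicativity, each product $a_F(p_1^{e_1})\cdots a_F(p_r^{e_r})$ is a bounded linear combination of eigenvalues $\tilde\lambda_F(M)$ with $M\mid(p_1\cdots p_r)^4$, and the $HE^+$-version of Theorem \ref{sum-formula} bounds the $F$-average of $\tilde\lambda_F(M)$ by a constant times $\delta_{\square,M}\,M^{-1/2}$ plus an error $O(M^{c}N^{-2})$. Since $p_i<c_{\underline{k},N}^{\beta_n}$ one has $M\le c_{\underline{k},N}^{O(\beta_n)}$, and choosing $\beta_n<1$ sufficiently small ensures $c_{\underline{k},N}^{c\beta_n}=o(N^2)$, killing the error. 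Summation of the main contribution against the smooth weights $(\log p_i)/p_i^{e_i/2}\widehat{\Phi}_{l_i}$ is then handled exactly as in the one-level case (\cite{KWY}, p.~68), producing $O((\log c_{\underline{k},N})^{r})$, which cancels the prefactor $(\log c_{\underline{k},N})^{-r}$ and leaves the required $O(1)$ bound.

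Combining these estimates with $|E_m|\ll(\log c_{\underline{k},N})^{-1}$ and summing the $\nu$ telescope terms yields the stated $O((\log c_{\underline{k},N})^{-1})$ error. The hard part will be the moment-type estimate in the second step: Hecke multiplicativity must be used carefully to collapse the product of $a_F(p_i^{e_i})$'s into a controlled sum of single Hecke eigenvalues $\tilde\lambda_F(M)$ for $M$ at most a small power of $c_{\underline{k},N}$, and one must verify that the $O(M^{c}N^{-2})$ error of Theorem \ref{sum-formula} remains absorbable over this range. All other ingredients --- the explicit formula, the telescoping, and the passage from $b_l$ back to $a_l$ --- are routine bookkeeping once the moment estimate is in hand.
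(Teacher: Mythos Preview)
Your strategy—telescope the product and control the resulting cross-terms by the Hecke sum formula—is exactly Rubinstein's Lemma~2 method, which is what the paper invokes (the paper's own ``proof'' is the single sentence ``following Lemma~2 in \cite{Rub}'', with the substantive input being the two subsequent lemmas analogous to Rubinstein's Claims~2 and~3). So the overall approach is the same.

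There is, however, a genuine gap in your reduction step. After telescoping you bound $|E_m|\ll(\log c_{\underline{k},N})^{-1}$ uniformly and then claim it suffices to show
\[
\frac{1}{d^+_{\underline{k},N}}\sum_{F}\prod_{l\ne m}|a_l(F)|=O(1).
\]
But your second step does not prove this: expanding $\prod_{l\ne m}a_l(F)$ and averaging term by term via Theorem~\ref{sum-formula} controls only $\bigl|\tfrac{1}{d^+}\sum_F\prod_{l\ne m}a_l(F)\bigr|$, which is a different and strictly weaker quantity. The discrepancy matters because $|a_l(F)|$ is \emph{not} bounded uniformly in $F$: the $\Sigma_{l,1}$ piece can be as large as $c_{\underline{k},N}^{\beta_n/2}/\log c_{\underline{k},N}$ by the Ramanujan bound alone, so one cannot recover the averaged absolute-value bound from the signed one by a trivial argument.

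The standard fix (and what Rubinstein actually does) is not to treat $E_m$ as a black box. Decompose $E_m(F)$ into (i)~the archimedean gamma-factor contribution, which depends only on $\underline{k}$ and hence is constant across the family and can be pulled outside the $F$-sum, and (ii)~sums over higher prime powers $p^j$ with $j\ge3$ and over ramified primes, which have the same structural shape as $\Sigma_{l,1},\Sigma_{l,2}$. One then expands the full telescoped product with all these pieces visible and applies the moment estimates (the analogues of Claims~2 and~3) to every resulting prime-tuple sum; no absolute values are needed inside the $F$-average, and your second-step computation now matches what is required. Alternatively you can salvage your route via H\"older's inequality, reducing to even moments $\tfrac{1}{d^+}\sum_F a_l(F)^{2k}$ which \emph{can} be expanded and bounded by the same Hecke sum formula; this works but costs a somewhat smaller $\beta_n$.
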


We prove two lemmas analogous to Claim 2 and Claim 3 in Rubinstein \cite{Rub}.

\begin{lem} (Claim 2 of \cite{Rub}) \label{lemma1}
\begin{equation*}\label{non-square}
\sum_{F \in HE^+_{\underline{k},N}}
\sum_{m_i\geq 1\atop m_1m_2\cdots m_a \neq \square} \prod_{j=1}^a \frac{\Lambda(m_j)a_{F}(m_j)}{\sqrt{m_j}}\widehat{\Phi}_{l_j}\left( \frac{\log m_j}{\log c_{\underline{k},N}}\right) \ll d^+_{\underline{k},N}(\log c_{\underline{k},N})^{a-1}.
\end{equation*}
where $m_i$'s are a prime or a square of a prime.
\end{lem}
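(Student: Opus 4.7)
The strategy mirrors Rubinstein's proof of the analogous Claim 2 in \cite{Rub} for $GL_2$, with Conjecture \ref{conj} playing the role of the Petersson trace formula. First, I would interchange the order of summation, bringing $\sum_{F \in HE^+_{\underline{k},N}}$ inside so that the key inner object is $\sum_F \prod_{j=1}^a a_F(m_j)$. Newton's identities applied to the Spin Satake parameters at each prime give $a_F(p) = \tilde\lambda_F(p)$ and $a_F(p^2) = 2\tilde\lambda_F(p^2) - \tilde\lambda_F(p)^2$; substituting, the product $\prod_j a_F(m_j)$ becomes a polynomial in the Hecke eigenvalues $\tilde\lambda_F(p^r)$ at the primes $p$ appearing among the $m_j$.

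At each such prime $p$, the factors of the product involving $p$ combine into a symmetric polynomial of homogeneous degree $K_p := \sum_{j:\, p_j = p} k_j$ in the four Spin Satake parameters at $p$, where $k_j \in \{1,2\}$ is the exponent of $p$ in $m_j$. The $GSp_4$ Hecke algebra relations at $p$ rewrite this as a $\Q$-linear combination of single Hecke eigenvalues $\tilde\lambda_F(p^c)$; because $\tilde\lambda_F(p^r)$ corresponds to the complete homogeneous symmetric polynomial $h_r$, and the $h_r$ basis is graded by degree, every exponent $c$ that appears satisfies $c \equiv K_p \pmod{2}$. Combining across distinct primes by multiplicativity of $\tilde\lambda_F$ then yields
\[
\prod_{j=1}^a a_F(m_j) \;=\; \sum_{M} c_M(\vec m)\, \tilde\lambda_F(M),
\]
where each $M$ that contributes satisfies $\ord_p(M) \equiv \ord_p(m_1 \cdots m_a) \pmod{2}$ at every prime $p$. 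In particular, when $m_1 m_2 \cdots m_a$ is not a square, no such $M$ is a square either.

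Now apply Conjecture \ref{conj} to each $\sum_F \tilde\lambda_F(M)$: the main term $\delta_\square$ vanishes uniformly, leaving only the error $O(d^+_{\underline{k},N} N^{-2} M^c) = O(d^+_{\underline{k},N} N^{-2} (m_1\cdots m_a)^c)$. Since the compact support of $\hat\Phi_{l_j}$ forces $m_j \le c_{\underline{k},N}^{\beta_n}$, and $c_{\underline{k},N} \ll N^{4+\ve}$, the polynomial blow-up $M^c$ is absorbed by $N^{-2}$ once $\beta_n$ is taken small enough; summing this error weighted by $\Lambda(m_j)/\sqrt{m_j}$ over $m_j$ in the admissible range easily yields a bound substantially stronger than the claimed $d^+_{\underline{k},N} (\log c_{\underline{k},N})^{a-1}$. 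The main technical obstacle is the parity-preservation step: while for $GL_2$ this follows transparently from $\lambda(p^a)\lambda(p^b) = \sum_{d \le \min(a,b)} \lambda(p^{a+b-2d})$, for $GSp_4$ one must track the degree grading on symmetric polynomials in the four Spin Satake parameters $(\alpha_1,\alpha_2,\alpha_3,\alpha_4)$ (exploiting the similitude relation $\alpha_1\alpha_4 = \alpha_2\alpha_3$) to confirm that the Hecke algebra rewriting does not disturb the parity at any local factor.
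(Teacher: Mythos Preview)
Your approach differs substantially from the paper's. The paper does not run a uniform parity argument; after grouping the $m_j$ by their underlying prime into $a_F(p_1)^{e_1}\cdots a_F(p_r)^{e_r}a_F(q_1^2)^{\gamma_1}\cdots a_F(q_t^2)^{\gamma_t}$, it splits into two cases. In Case~1 (some $e_i=1$) it argues, via a Hecke--algebra rewriting and Theorem~\ref{sum-formula}, that each resulting term carries $p_1$ to the first power and is therefore non-square, so Conjecture~\ref{conj} contributes only error terms. In Case~2 (all $e_i\ge 2$, hence some $e_i\ge 3$ since the product is not a square) the paper abandons the sum formula entirely and bounds the prime sums in absolute value, for each $F$ individually, via the Rankin--Selberg estimates (\ref{bounds1}) and (\ref{bounds2}). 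Those pointwise estimates are the essential ingredient your plan omits.

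Your parity step also contains a genuine gap. You assert that at a fixed prime the product of the relevant power sums can be rewritten as a $\Q$-linear combination of single coefficients $\tilde\lambda_F(p^c)$ because ``the $h_r$ basis is graded by degree.'' But $\{h_r\}_{r\ge 0}$ is \emph{not} a $\Q$-linear basis of the Satake ring for $GSp_4$: even after imposing the self-duality relations $e_4=1$, $e_3=e_1$, that ring is $\Q[e_1,e_2]$, and already $h_1^2=\tilde\lambda_F(p)^2$ fails to lie in $\mathrm{span}_{\Q}\{h_r\}$ (a top-degree comparison forces the combination to involve only $h_0,h_1,h_2$, which is impossible). Hence $\prod_j a_F(m_j)$ is in general not of the form $\sum_M c_M\,\tilde\lambda_F(M)$, and Conjecture~\ref{conj}, which is stated only for $\tilde\lambda_F(m)$, cannot be invoked as you propose. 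One could attempt a repair at the level of the full Hecke algebra using the similitude grading (which does respect parity), but that would require the analogue of Conjecture~\ref{conj} for arbitrary Hecke operators on $HE^+_{\underline{k},N}$ --- a hypothesis the paper neither states nor needs, precisely because Case~2 is handled by (\ref{bounds1})--(\ref{bounds2}) instead.
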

\begin{proof} By changing the order of the sums, we need to consider, for $p_1,p_2,\cdots,p_r,q_1,q_2,\cdots,q_t$, distinct primes,

\begin{eqnarray}\label{sum}
\sum_{F \in HE^+_{\underline{k},N}}
a_{F}(p_1)^{e_1}\cdots a_{F}(p_r)^{e_r} a_{F}(q_1^2)^{\gamma_1}\cdots a_{F}(q_t^2)^{\gamma_t},
\end{eqnarray}
where $e_1+ \cdots +e_r + \gamma_1 + \cdots +\gamma_t=a$.

Now, by rearranging, if there exists $e_i=1$ for some $i$, we can assume that $e_1=\cdots=e_b=1$ and $e_{b+1}>1,...e_r>1$.
Then use the fact that $\lambda_F(p)^e=\sum_{i=0}^e c_i \lambda_F(p^i)$ for some constants $c_i$. Hence
$a_{F}(p_1)^{e_1}\cdots a_{F}(p_r)^{e_r} a_{F}(q_1^2)^{\gamma_1}\cdots a_{F}(q_t^2)^{\gamma_t}$ is the sum of
$\lambda_F(p_1\cdots p_b n)$, where $n$ is not divisible by $p_1,...,p_b$. Hence by Theorem \ref{sum-formula}, $(\ref{sum})$ gives rise to an error term.
So we have the result.

Suppose $e_i>1$ for all $i$. Then $e_i\geq 3$ for some $i$ by assumption. Then we have the result
by using the bounds (\cite[(4.10) and (4.11)]{CK2}) :
\begin{eqnarray}
&& \sum_{p \leq R^{\beta_n}} \frac{|a_F(p)|^i(\log p)^i}{\sqrt{p^i}}\ll \begin{cases} R^{\frac {\beta_n}2}, &\text{if $i=1$}\\
(\log R)^2, &\text{if $i=2$}\\
O(1), &\text{if $i\geq 3$}.
\end{cases}, \label{bounds1}\\
&& \sum_{p \leq R^{\beta_n/2}} \frac{|a_F(p^2)|^{j}(\log p)^{j}}{p^{j}}
\ll \begin{cases} O(\log R), &\text{if $j=1$}\\
 O(1), &\text{if $j \geq 2$}.
\end{cases} \label{bounds2}
\end{eqnarray}
\end{proof}

\begin{lem} (Claim 3 of \cite{Rub}) \label{lemma2}
\begin{eqnarray*}\label{square}
&&\hspace{0.4in} \frac{1}{d^+_{\underline{k},N}} \left(\frac {-2}{\log c_{\underline{k},N}}\right)^a \sum_{F \in HE^+_{\underline{k},N}}
\sum_{m_i\geq 1\atop m_1m_2\cdots m_a = \square} \left( \prod_{j=1}^l \frac{\Lambda(m_j)a_{F}(m_j)}{\sqrt{m_j}}\widehat{\Phi}_{l_j}\left( \frac{\log m_j}{\log c_{\underline{k},N}}\right) \right)\\
&=& \sum_{S_2 \subseteq S\atop |S_2| \, {\rm even}} \left( \left( \frac{\delta_{\pi}}{2}\right)^{|S_2^c|} \prod_{l \in S_2^c} \int_{\mathbb{R}} \widehat{\Phi}_l (u)du \right)
\left( \sum_{P_2}2^{|S_2|/2} \prod_{j=1}^{|S_2|/2} \int_{\mathbb{R}}|u|\widehat{\Phi}_{a_j}(u)\widehat{\Phi}_{b_j}(u)du\right) +O\left(\frac {1}{\log c_{\underline{k},N}}\right),\nonumber
\end{eqnarray*}
where $m_i$'s are a prime or a square of a prime, and $S=\{l_1,...,l_a\}$, $\sum_{S_2 \subseteq S\atop |S_2| \, {\rm even}}$ is over all subsets $S_2$ of $S$ whose size is even, and $\sum_{P_2}$ is over all ways of pairing up the elements of $S_2$.
\end{lem}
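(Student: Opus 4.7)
The plan is to partition $S=\{l_1,\ldots,l_a\}$ by the type of $m_{l_j}$. Writing $S=S_2\sqcup S_2^c$ with $l\in S_2^c$ when $m_l=q_l^2$ (square of a prime) and $l\in S_2$ when $m_l=p_l$ is a prime, the $S_2^c$-factors are automatically squares, so the constraint $m_1\cdots m_a=\square$ reduces to $\prod_{l\in S_2}p_l=\square$. This forces the primes attached to $S_2$ to occur in pairs; thus $|S_2|$ is even and we must further sum over all perfect pairings $P_2=\{(a_j,b_j)\}_{j=1}^{|S_2|/2}$ of $S_2$, setting $p_{a_j}=p_{b_j}$. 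Configurations in which the same prime appears in more than one pair (or in both $S_2$ and $S_2^c$) contribute only $O(1/\log c_{\underline{k},N})$, absorbed via the tail bounds (\ref{bounds1})--(\ref{bounds2}) exactly as in \cite{Rub}; similarly the higher prime powers $m_j=p^k$ with $k\ge 3$, which can also produce squares, are negligible by (\ref{bounds2}).

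Next I would carry out the $F$-average factor by factor using the $\epsilon=+1$ trace formula of Conjecture \ref{conj}. For each $l\in S_2^c$ the average $\frac{1}{d^+_{\underline{k},N}}\sum_{F\in HE^+_{\underline{k},N}}a_F(q_l^2)$ produces a constant $\delta_\pi$, independent of $q$, which is the precise main term from Theorem \ref{sum-formula} at $m=q^2$; the error survives the outer prime sum because $\beta_n<1$. For each pair $(a_j,b_j)$ with $p_{a_j}=p_{b_j}=p$, the average of $a_F(p)^2$ produces another absolute constant (namely $1$ in our normalization), obtained by decomposing $a_F(p)^2$ in the $\tilde\lambda_F(p^n)$-basis via Newton's identities and applying Conjecture \ref{conj}. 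Multiplicativity of $\tilde\lambda_F$ decouples the distinct primes entering different $S_2^c$-indices and different pairs.

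The remaining prime sums are $\frac{1}{\log c_{\underline{k},N}}\sum_q\frac{\log q}{q}\hat\Phi_l\!\left(\tfrac{2\log q}{\log c_{\underline{k},N}}\right)$ (for $l\in S_2^c$) and $\frac{1}{(\log c_{\underline{k},N})^2}\sum_p\frac{(\log p)^2}{p}\hat\Phi_{a_j}\!\left(\tfrac{\log p}{\log c_{\underline{k},N}}\right)\hat\Phi_{b_j}\!\left(\tfrac{\log p}{\log c_{\underline{k},N}}\right)$ (per pair). Setting $u=\log p/\log c_{\underline{k},N}$ (respectively $u=2\log q/\log c_{\underline{k},N}$) and applying the prime number theorem together with the evenness of $\hat\Phi_i$, these converge to $\frac{1}{2}\int_{\mathbb R}\hat\Phi_l(u)\,du$ and $\int_{\mathbb R}|u|\hat\Phi_{a_j}(u)\hat\Phi_{b_j}(u)\,du$, respectively; the weight $|u|$ in the pair case is forced by the extra $\log p$ together with the evenness of $\hat\Phi$. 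Collecting the outer $(-2/\log c_{\underline{k},N})^a$ and the combinatorial factor $2^{|S_2|/2}$ from the ordering within each pair recovers the claimed identity. The main obstacle is uniformity: the error in Theorem \ref{sum-formula} must remain $o(1)$ when summed against the Mangoldt weights over primes of size up to $c_{\underline{k},N}^{\beta_n}$, which is exactly why the support restriction $\beta_n<1$ and the tail bounds (\ref{bounds1})--(\ref{bounds2}) are imposed.
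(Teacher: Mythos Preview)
Your approach is essentially the same as the paper's: partition the indices according to whether $m_{l}$ is a prime or a prime square, force the prime indices to pair up, discard higher multiplicities via the tail bounds (\ref{bounds1})--(\ref{bounds2}), average over $F$ using Conjecture~\ref{conj}, and then convert the remaining prime sums to integrals by the prime number theorem. The paper organizes this by writing the inner sum as $\sum_F a_F(p_1)^{e_1}\cdots a_F(p_r)^{e_r}a_F(q_1^2)^{\gamma_1}\cdots a_F(q_t^2)^{\gamma_t}$ and observing that only the configuration with all $e_i=2$ and all $\gamma_j=1$ survives, which is exactly your pairing picture.

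One small correction: the constant you call $\delta_\pi$ does not come from Theorem~\ref{sum-formula} evaluated at $m=q^2$, since that theorem concerns the Dirichlet coefficients $\tilde\lambda_F(m)$, whereas $a_F(m)$ is the coefficient of $-L'/L$. The paper instead uses the identities
\[
a_F(p)^2=1+p^{-1}\cdot(\text{polynomial in }p^{-1})+(\text{sum of Hecke eigenvalues}),\qquad
a_F(p^2)=-1+p^{-1}\cdot(\text{polynomial in }p^{-1})+(\text{sum of Hecke eigenvalues}),
\]
which one gets by expressing the power sums $a_F(p^k)$ in terms of the $\tilde\lambda_F(p^n)$ via Newton's identities (as you do correctly for $a_F(p)^2$) and then applying Conjecture~\ref{conj} to kill the Hecke terms. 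With this adjustment your argument goes through unchanged.
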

\begin{proof} First of all, in (\ref{sum}), if $e_i\geq 4$ or $\gamma_j\geq 2$ for some $i,j$, then by (\ref{bounds1}) and (\ref{bounds2}),
those terms are majorized by $d^+_{\underline{k},N}(\log c_{\underline{k},N})^{a-1}$.
Hence we only need to consider the sum
\begin{eqnarray*}
\sum_{F \in HE^+_{\underline{k},N}} a_{F}(p_1)^2\cdots a_{F}(p_r)^2 a_{F}(q_1^2)\cdots a_{F}(q_t^2),
\end{eqnarray*}
where $2r+t=a$. In this case, $S_2=\{l_1,l_2,...,l_{2r-1},l_{2r}\}$, and $S=\{l_1,...,l_{2r},l_{2r+1},...,l_a\}$.
We use the fact that
$a_F(p)^2=1+p^{-1}\times (\text{polynomials in $p^{-1}$)+sum of Hecke operators}$, and $a_F(p^2)= -1+p^{-1}\times(\text{polynomials in $p^{-1}$)+sum of Hecke operators}$.
\end{proof}

We define
$$\Delta_{l,2}(\Phi_l)=\Sigma_{l,2}(\Phi_l)+\frac 14 \Phi_l(0)\log c_{\underline{k},N}.
$$
Since $\int_{\R} \Phi_l(x)dx=\widehat{\Phi}_l(0)$,
\begin{eqnarray*}
&&(\ref{n-level})=\frac{1}{d^+_{\underline{k},N}}\sum_{F \in HE^+_{\underline{k},N}}
 \sum_{\underline{L}} (-2)^{n-\nu(\underline{L})}\prod_{l=1}^{\nu(\underline{L})}\left( |F_l|-1 \right)! \\
&&\phantom{xxxxxxxxxxxx} \cdot \left( \left(\widehat{\Phi}_l(0)+\frac 12\Phi_l(0)\right)-\frac 2{\log c_{\underline{k},N}}\Sigma_{l,1}(\Phi_l)-\frac 2{\log c_{\underline{k},N}}\Delta_{l,2}(\Phi_l)\right)+O\left(\frac 1{\log c_{\underline{k},N}}\right).
\end{eqnarray*}

The following three lemmas enable us to find an explicit expression for $(\ref{n-level})$.
\begin{lem}
\begin{eqnarray*} \label{new-sum}
\frac{1}{d^+_{\underline{k},N}}\left(\frac {-2}{\log c_{\underline{k},N}}\right)^a \sum_{F \in HE^+_{\underline{k},N}}
 \Delta_{l_1,2}(\Phi_{l_1})\cdots \Delta_{l_a,2}(\Phi_{l_a})\ll \frac {1}{\log c_{\underline{k},N}}.
\end{eqnarray*}
\end{lem}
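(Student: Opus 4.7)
The plan is to prove the stronger unnormalized estimate
\[
\frac{1}{d^+_{\underline{k},N}}\sum_{F\in HE^+_{\underline{k},N}} \Delta_{l_1,2}(\Phi_{l_1})\cdots \Delta_{l_a,2}(\Phi_{l_a}) \ll (\log c_{\underline{k},N})^{a-1},
\]
from which the desired bound follows immediately after multiplication by $(-2/\log c_{\underline{k},N})^a$. The guiding idea is that the explicit shift $\tfrac14\Phi_l(0)\log c_{\underline{k},N}$ in the definition of $\Delta_{l,2}(\Phi_l)$ is chosen precisely so that, on average, it cancels the leading contribution of $\Sigma_{l,2}(\Phi_l)$, leaving a fluctuation of size $O(1)$.

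First I would record the prime number theorem estimate
\[
\sum_p \frac{\log p}{p}\,\widehat{\Phi}_l\!\left(\frac{2\log p}{\log c_{\underline{k},N}}\right) = \frac14\,\Phi_l(0)\,\log c_{\underline{k},N} + O(1),
\]
which follows from Mertens' theorem and partial summation since $\widehat{\Phi}_l$ is even and compactly supported. Next, using the Newton identity $a_F(p^2) = 2\tilde\lambda_F(p^2) - \tilde\lambda_F(p)^2$ that relates the log-derivative coefficients of $L(s,\pi_F,{\rm Spin})$ to its Dirichlet coefficients, combined with Theorem \ref{sum-formula} (extended to $HE^+_{\underline{k}}(N)$ by Conjecture \ref{conj}), one obtains
\[
\frac{1}{d^+_{\underline{k},N}}\sum_{F} a_F(p^2) = -1 + O(p^{-1}) + O(N^{-2}p^{c}).
\]
Combined with the PNT estimate, this gives $\tfrac{1}{d^+_{\underline{k},N}}\sum_F \Sigma_{l,2}(\Phi_l) = -\tfrac14\Phi_l(0)\log c_{\underline{k},N} + O(1)$, so the first moment of $\Delta_{l,2}(\Phi_l)$ is $O(1)$ as designed.

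For the $a$-fold product I would expand
\[
\prod_{j=1}^a \Delta_{l_j,2}(\Phi_{l_j}) = \sum_{S\subseteq\{1,\dots,a\}} \prod_{j\in S}\Sigma_{l_j,2}(\Phi_{l_j}) \prod_{j\notin S}\frac14 \Phi_{l_j}(0)\log c_{\underline{k},N},
\]
and for each subset $S$ apply Theorem \ref{sum-formula} to the resulting average $\tfrac{1}{d^+_{\underline{k},N}}\sum_F \prod_{j\in S} a_F(p_j^2)$. Arguing as in Lemma \ref{lemma1} and Lemma \ref{lemma2}, only the squarelike diagonal contributions persist at leading order; by the PNT identity, these diagonal pieces combine with the $\tfrac14\Phi_{l_j}(0)\log c_{\underline{k},N}$ shifts from the complementary factors so that all terms of size $(\log c_{\underline{k},N})^a$ cancel exactly. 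Off-diagonal contributions and the higher-order arithmetic errors from Theorem \ref{sum-formula} are controlled using the bounds $\sum_{p\le R^{\beta_n/2}} (\log p)^s p^{-s} \ll 1$ for $s\ge 2$ already employed in \eqref{bounds2}, yielding at most $O((\log c_{\underline{k},N})^{a-1})$.

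The main obstacle is the combinatorial bookkeeping that shows every term of order $(\log c_{\underline{k},N})^a$ in the expansion cancels exactly; this is essentially Rubinstein's identity in \cite{Rub} adapted to our family, and it depends on the precision of the main term $-1$ in the average of $a_F(p^2)$, which in turn requires the full strength of the extension of Theorem \ref{sum-formula} to the signed subfamily $HE^+_{\underline{k}}(N)$ provided by Conjecture \ref{conj}. The remaining computations are then routine applications of PNT-type bounds.
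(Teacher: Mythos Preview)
Your proposal is correct and follows the same route the paper intends: the paper does not write out a proof of this lemma but leaves it (together with the two companion lemmas) to the reader, pointing to Rubinstein \cite{Rub} and to the facts already recorded in the proofs of Lemma~\ref{lemma1} and Lemma~\ref{lemma2}, in particular the decomposition $a_F(p^2)=-1+p^{-1}\cdot(\text{polynomial in }p^{-1})+(\text{sum of Hecke eigenvalues})$. Your sketch --- rewriting $\Delta_{l,2}(\Phi_l)$ via the PNT identity as an $O(1)$ constant plus a Hecke-fluctuation term, expanding the $a$-fold product, and controlling the cross terms with Theorem~\ref{sum-formula}/Conjecture~\ref{conj} and the bounds \eqref{bounds2} --- is exactly that argument made explicit, and your derivation of the $-1$ main term through the Newton identity $a_F(p^2)=2\tilde\lambda_F(p^2)-\tilde\lambda_F(p)^2$ is equivalent to the paper's direct assertion.
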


\begin{lem}
\begin{equation*}\label{new-sum-2}
\hspace{0.4in}\frac{1}{d^+_{\underline{k},N}} \left( \frac{-2}{\log c_{\underline{k},N}}\right)^a \sum_{F \in HE^+_{\underline{k},N}} \Sigma_{l_1,1}(\Phi_{l_1})\cdots \Sigma_{l_r,1}(\Phi_{l_r}) \Delta_{l_{r+1},2}(\Phi_{l_{r+1}})\cdots \Delta_{l_a,2}(\Phi_{l_a})=O\left(\frac {1}{\log c_{\underline{k},N}}\right).
\end{equation*}
\end{lem}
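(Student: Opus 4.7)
The strategy mirrors the previous lemma and exploits the fact that $\Delta_{l,2}$ is designed precisely so that, after averaging over $F$ and multiplication by $-2/\log c_{\underline{k},N}$, the main-term contribution of $\Sigma_{l,2}$ coming from $\langle a_F(p^2)\rangle\sim -1$ is exactly cancelled by the added constant $\tfrac14\Phi_l(0)\log c_{\underline{k},N}$. Hence any product containing at least one $\Delta$-factor should have a vanishing overall main term; I plan to realise this cancellation combinatorially by expanding each $\Delta_{l_j,2}$ and grouping the resulting sum into subsums amenable to Lemmas \ref{lemma1} and \ref{lemma2}.

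First I would expand $\Delta_{l_j,2}(\Phi_{l_j})=\Sigma_{l_j,2}(\Phi_{l_j})+\tfrac14\Phi_{l_j}(0)\log c_{\underline{k},N}$ for each $j=r+1,\ldots,a$, turning the product in the statement into
\[
\sum_{T\subseteq\{r+1,\ldots,a\}}\Sigma_{l_1,1}(\Phi_{l_1})\cdots\Sigma_{l_r,1}(\Phi_{l_r})\prod_{j\in T}\Sigma_{l_j,2}(\Phi_{l_j})\cdot\prod_{j\notin T}\tfrac14\Phi_{l_j}(0)\log c_{\underline{k},N}.
\]
For each $T$, I would apply Lemma \ref{lemma2} to $\bigl(\tfrac{-2}{\log c_{\underline{k},N}}\bigr)^{r+|T|}\frac{1}{d^+_{\underline{k},N}}\sum_F \Sigma_{l_1,1}\cdots\Sigma_{l_r,1}\prod_{j\in T}\Sigma_{l_j,2}$. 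In that lemma's decomposition, the subset $S_2$ is forced to equal $\{l_1,\ldots,l_r\}$ (the primes arising from $\Sigma_{\cdot,1}$-factors) and $S_2^c=\{l_j:j\in T\}$ (the prime-squares from $\Sigma_{\cdot,2}$-factors). Assuming $r$ is even, the main term reduces to
\[
\bigl(\tfrac{\delta_\pi}{2}\bigr)^{|T|}\prod_{j\in T}\Phi_{l_j}(0)\cdot M,\qquad M=\sum_{P_2\text{ on }\{l_1,\ldots,l_r\}}2^{r/2}\prod_{\text{pairs }(a,b)}\int_{\R}|u|\widehat\Phi_{l_a}(u)\widehat\Phi_{l_b}(u)\,du,
\]
with $\delta_\pi=1$ for the SO(even) family $HE^+_{\underline{k},N}$ (if $r$ is odd no pairing exists, $M=0$, and the whole $T$-contribution is absorbed into the $O(1/\log c_{\underline{k},N})$ error furnished by Lemmas \ref{lemma1} and \ref{lemma2}).

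Next I would assemble the constants. Attaching the outstanding factor $\bigl(\tfrac{-2}{\log c_{\underline{k},N}}\bigr)^{a-r-|T|}\prod_{j\notin T}\tfrac14\Phi_{l_j}(0)\log c_{\underline{k},N}$, the powers of $\log c_{\underline{k},N}$ cancel exactly and each index $j\notin T$ contributes a net $-\tfrac12\Phi_{l_j}(0)$ while each $j\in T$ contributes $\tfrac{\delta_\pi}{2}\Phi_{l_j}(0)$. Summing over $T\subseteq\{r+1,\ldots,a\}$ collapses by the binomial identity
\[
\prod_{j=r+1}^{a}\Phi_{l_j}(0)\cdot M\cdot\sum_{T}\prod_{j\notin T}\bigl(-\tfrac12\bigr)\prod_{j\in T}\bigl(\tfrac{\delta_\pi}{2}\bigr)=\prod_{j=r+1}^{a}\Phi_{l_j}(0)\cdot M\cdot\Bigl(\frac{\delta_\pi-1}{2}\Bigr)^{a-r}=0,
\]
since $\delta_\pi=1$ and $a>r$ (there is at least one $\Delta$-factor in the statement). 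All main terms cancel, and the leftover error is $O(2^{a-r}/\log c_{\underline{k},N})=O(1/\log c_{\underline{k},N})$ because $a$ is fixed.

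The main obstacle is verifying $\delta_\pi=1$ for the subfamily $HE^+_{\underline{k},N}$: this amounts to the assertion that the vertical average $\langle a_F(p^2)\rangle$ over the root-number-$(+1)$ subspace has the same leading constant $-1$ as over all of $HE_{\underline{k},N}$, which is precisely the content of Conjecture \ref{conj} applied to the Hecke operators appearing in $a_F(p^2)$. Once this normalisation is in hand, the argument reduces to the clean binomial cancellation above.
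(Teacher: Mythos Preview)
Your argument is correct and follows the standard Rubinstein approach that the paper invokes (the paper states this lemma without proof, deferring to \cite{Rub} and \cite{CK2}). Expanding each $\Delta_{l_j,2}$, applying the specialised version of Lemma~\ref{lemma2} in which the indices carrying $\Sigma_{\cdot,1}$ are forced into $S_2$ and those carrying $\Sigma_{\cdot,2}$ into $S_2^c$, and then collapsing via the binomial identity $\sum_T(\tfrac{\delta_\pi}{2})^{|T|}(-\tfrac12)^{a-r-|T|}=0$ is exactly the intended mechanism; your identification $\delta_\pi=1$ (coming from $a_F(p^2)=-1+O(p^{-1})+\text{Hecke terms}$ together with Conjecture~\ref{conj} for the $+$ subspace) is the right input.

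One small clarification worth making explicit: you are not applying Lemma~\ref{lemma2} verbatim but a restricted variant in which each $m_i$ has prescribed type (prime for $i\le r$, prime-square for $i\in T$), so the sum over $S_2$ degenerates to the single term $S_2=\{l_1,\dots,l_r\}$; the proof of Lemma~\ref{lemma2} establishes this restricted statement along the way, so no new work is needed. Likewise the non-square contributions for each fixed $T$ are handled by the argument of Lemma~\ref{lemma1} rather than Lemma~\ref{lemma1} as stated. With those remarks your write-up is complete.
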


\begin{lem}
\begin{eqnarray*}
 &&\frac{1}{d^+_{\underline{k},N}} \left( \frac{-2}{\log c_{\underline{k},N}}\right)^a \sum_{F \in HE^+_{\underline{k},N}} \Sigma_{l_1,1}(\Phi_{l_1})\cdots \Sigma_{l_a,1}(\Phi_{l_a}) \\
 &=&\begin{cases} \displaystyle{\sum_{P_2} 2^{\frac a2} \prod_{i=1}^a \int_{\Bbb R} |u| \widehat{\Phi}_{l_{a_i}}(u)\widehat{\Phi}_{l_{b_i}}(u)\, du} + O\left(\frac {1}{\log c_{\underline{k},N}}\right), &\text{if $a$ is {\rm even}} \\
 \displaystyle{O\left(\frac {1}{\log c_{\underline{k},N}}\right)}, &\text{{\rm otherwise.}}
\end{cases}
\nonumber \end{eqnarray*}
Here $\sum_{P_2}$ is over all ways of paring up the elements of $\{1,2,\dots,a\}$. If $a=2r$, it runs through products of 2-cycles of the form $(l_1 l_2)\cdots (l_{2r-1} l_{2r})$. There are $\displaystyle\frac {(2r)!}{2^r r!}$ of them.
\end{lem}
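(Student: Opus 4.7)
The plan is to expand the product of $\Sigma_{l_j,1}(\Phi_{l_j})$'s into a multi-prime sum
\[
\Sigma_{l_1,1}(\Phi_{l_1})\cdots \Sigma_{l_a,1}(\Phi_{l_a}) = \sum_{p_1,\ldots,p_a}\prod_{j=1}^{a}\frac{a_F(p_j)\log p_j}{\sqrt{p_j}}\,\widehat{\Phi}_{l_j}\!\left(\frac{\log p_j}{\log c_{\underline{k},N}}\right),
\]
and split the sum according to whether the product $p_1 p_2\cdots p_a$ is a perfect square. The non-square part is handled at once by Lemma \ref{lemma1}, which after summation over $F\in HE^+_{\underline{k},N}$ is bounded by $O\!\big(d^+_{\underline{k},N}(\log c_{\underline{k},N})^{a-1}\big)$; multiplication by the prefactor $(-2/\log c_{\underline{k},N})^{a}/d^+_{\underline{k},N}$ turns this into $O(1/\log c_{\underline{k},N})$.

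If $p_1 p_2\cdots p_a$ is a square then each prime must appear with even multiplicity, which forces $a$ to be even; if $a$ is odd no square tuples exist and only the error $O(1/\log c_{\underline{k},N})$ remains. For $a = 2r$ the square configurations are classified by set partitions of $\{1,\ldots,2r\}$ whose blocks are the indices sharing a common prime. A configuration containing a block of size $m\ge 3$ contributes the bounded sum $\sum_p (\log p)^{m}p^{-m/2}=O(1)$ at that block, while the remaining at most $r-1$ blocks grow by at most $(\log c_{\underline{k},N})^{2r-2}$; after applying the prefactor these configurations are $O(1/\log c_{\underline{k},N})$. Hence only the exact pairings $P_2 = \{(a_1,b_1),\ldots,(a_r,b_r)\}$ with $p_{a_i} = p_{b_i}$ and the $r$ primes mutually distinct contribute to the main term.

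For each exact pairing $P_2$ the surviving contribution is
\[
\sum_{q_1,\ldots,q_r \text{ distinct}}\prod_{i=1}^{r}\frac{a_F(q_i)^2(\log q_i)^2}{q_i}\widehat{\Phi}_{l_{a_i}}\!\left(\frac{\log q_i}{\log c_{\underline{k},N}}\right)\widehat{\Phi}_{l_{b_i}}\!\left(\frac{\log q_i}{\log c_{\underline{k},N}}\right).
\]
Averaging over $F$, the Hecke-algebra relation writing $a_F(q_i)^2$ as $1 + q_i^{-1}\times(\text{polynomial in }q_i^{-1}) + (\text{sum of Hecke operators})$ (compare the discussion in the proof of Lemma \ref{lemma2}), combined with Theorem \ref{sum-formula}, shows that $\frac{1}{d^+_{\underline{k},N}}\sum_F\prod_i a_F(q_i)^2 = 1 + (\text{admissible error})$. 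A Mertens-type partial summation,
\[
\sum_p \frac{(\log p)^2}{p}\,g\!\left(\frac{\log p}{\log c_{\underline{k},N}}\right)= (\log c_{\underline{k},N})^{2}\int_{0}^{\infty}\!u\,g(u)\,du + O(\log c_{\underline{k},N}),
\]
applied with $g = \widehat{\Phi}_{l_{a_i}}\widehat{\Phi}_{l_{b_i}}$, together with the evenness of $\widehat{\Phi}_{l_i}$ (which rewrites $\int_{0}^{\infty}u\,g(u)du = \tfrac{1}{2}\int_{\mathbb{R}}|u|g(u)du$), reduces the single-prime sum to $(\log c_{\underline{k},N})^{2}\cdot\tfrac{1}{2}\int_{\mathbb{R}}|u|\widehat{\Phi}_{l_{a_i}}(u)\widehat{\Phi}_{l_{b_i}}(u)du$. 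Taking the product over the $r$ pairs and combining with the prefactor $(-2/\log c_{\underline{k},N})^{2r} = 4^r/(\log c_{\underline{k},N})^{2r}$ produces exactly $2^{r} = 2^{a/2}$ times the claimed integrals.

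The hardest part will be the careful bookkeeping of error terms: using the support condition $\widehat{\Phi}_i\subset(-\beta_n,\beta_n)$ to absorb the error $O(N^{-2}(q_1\cdots q_r)^{c})$ from the Hecke average of Theorem \ref{sum-formula} into $O(1/\log c_{\underline{k},N})$; using inclusion-exclusion to pass from the unrestricted product over $r$ primes to the distinct-prime sum above; and quantitatively controlling the high-coincidence block configurations via the bounds $(\ref{bounds1})$--$(\ref{bounds2})$. Once these routine reductions are carried out, the result follows along the lines of Lemmas 2 and 3 of Rubinstein \cite{Rub}.
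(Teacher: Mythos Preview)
Your proposal is correct and follows essentially the same route as the paper (which in turn defers to Rubinstein's Claims~2 and~3): split by square/non-square, invoke Lemma~\ref{lemma1} for the non-square part, discard high-coincidence blocks via (\ref{bounds1}), and evaluate the exact pairings using the relation $a_F(p)^2=1+p^{-1}(\cdots)+\text{Hecke terms}$ together with Conjecture~\ref{conj}/Theorem~\ref{sum-formula} and the Mertens-type asymptotic for $\sum_p (\log p)^2 p^{-1}g(\log p/\log c_{\underline{k},N})$. Two small clean-ups: in the square case each block necessarily has \emph{even} cardinality, so your ``$m\ge 3$'' should read ``$m\ge 4$'' (the estimate you use is unaffected); and note that the product $\prod_{i=1}^{a}$ in the statement is a typo in the paper for $\prod_{i=1}^{a/2}$, which is exactly what your computation produces.
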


Therefore, we have proved

\begin{thm}\label{+1}
\begin{eqnarray} \label{n-level+1}
&& \frac 1{d^+_{\underline{k},N}} \sum_{F \in HE^+_{\underline{k},N}} D^{(n)}(\pi_F,\phi,{\rm Spin})= \sum_{\underline{L}} (-2)^{n-\nu(\underline{L})}\prod_{l=1}^{\nu(\underline{L})}\left( |F_l|-1 \right)! \\
&& \sum_{S\, {\rm even}} \left( \prod_{l\in S^c} \left(\widehat{\Phi_l}(0)+\frac 12\Phi_l(0)\right) \right)
\left(\sum_{P_2} 2^{\frac {|S|}2} \prod_{i=1}^{\frac {|S|}2} \int_{\Bbb R} |u| \widehat{\Phi}_{l_{a_i}}(u)\widehat{\Phi}_{l_{b_i}}(u)\, du\right) +O\left(\frac 1{\log c_{\underline{k},N}}\right),\nonumber
\end{eqnarray}
where $S$ runs through the subset of even cardinality in $\{1,...,\nu(\underline{L})\}$, and $\sum_{P_2}$ is over all ways of pairing up the elements of $S$.
\end{thm}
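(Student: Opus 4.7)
The plan is to evaluate the average of $D^{(n)}(\pi_F, \phi, \mathrm{Spin})$ over $HE^+_{\underline{k},N}$ by expanding the product in (\ref{n-level}) and applying the three auxiliary lemmas to isolate the main term, following the Rubinstein--Conrey--Snaith scheme \cite{Rub}.

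First, I would use the substitution $\Sigma_{l,2}(\Phi_l) = \Delta_{l,2}(\Phi_l) - \tfrac14 \Phi_l(0)\log c_{\underline{k},N}$ inside (\ref{n-level}) so that each of the $\nu(\underline{L})$ factors takes the form
$$\left(\widehat{\Phi}_l(0)+\tfrac12\Phi_l(0)\right) - \frac{2}{\log c_{\underline{k},N}} \Sigma_{l,1}(\Phi_l) - \frac{2}{\log c_{\underline{k},N}} \Delta_{l,2}(\Phi_l).$$
Expanding the resulting product over $l$ produces $3^{\nu(\underline{L})}$ monomial contributions indexed by an assignment, to each $l$, of one of the three factor types (archimedean, $\Sigma$, or $\Delta$). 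The three lemmas immediately following Lemma \ref{lemma2} then show that, after averaging over $F \in HE^+_{\underline{k},N}$, every monomial containing at least one $\Delta$-factor contributes $O(1/\log c_{\underline{k},N})$; hence only monomials built from archimedean factors and $\Sigma_{l,1}$-factors survive.

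Second, for each surviving monomial I would denote by $S \subseteq \{1,\ldots,\nu(\underline{L})\}$ the index set carrying $\Sigma_{l,1}$, expand those factors as prime sums, and exchange the $F$-average with the prime sum. The inner average $\frac{1}{d^+_{\underline{k},N}} \sum_F a_F(m_1) \cdots a_F(m_{|S|})$ is controlled via Theorem \ref{sum-formula}, which Conjecture \ref{conj} extends to $HE^+_{\underline{k},N}$ with the necessary power-saving error. Lemma \ref{lemma1} shows that tuples with $m_1 \cdots m_{|S|}$ non-square contribute $O(1/\log c_{\underline{k},N})$, while Lemma \ref{lemma2}, via the Hecke relation $a_F(p)^2 = \lambda_F(p^2) + 1 + O(p^{-1})$ together with the bounds (\ref{bounds1}) and (\ref{bounds2}), forces the surviving squares to come from pairings of $S$; each pair $(a_i,b_i)$ produces a factor $2 \int_{\R} |u|\,\widehat{\Phi}_{l_{a_i}}(u)\widehat{\Phi}_{l_{b_i}}(u)\,du$ by the prime number theorem estimate $\sum_{p \le X} (\log p)^2/p \sim \log X$. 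In particular $|S|$ must be even. Finally I would reassemble, summing the pairing contributions over even subsets $S \subseteq \{1,\ldots,\nu(\underline{L})\}$ and over set partitions $\underline{L}$ of $\{1,\ldots,n\}$, with the complement $S^c$ contributing archimedean factors $\widehat{\Phi}_l(0) + \tfrac12 \Phi_l(0)$. This combinatorial reassembly produces exactly (\ref{n-level+1}) with total error $O(1/\log c_{\underline{k},N})$.

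The main obstacle is the dependence on Conjecture \ref{conj}: without a Petersson-type trace formula with power-saving error on the root-number-$+1$ subfamily, neither the negligibility of non-square multi-prime sums (Lemma \ref{lemma1}) nor the clean emergence of the pairing structure (Lemma \ref{lemma2}) can be justified, and the three auxiliary lemmas break down. Granting the conjecture, however, all the hard analytic input is already packaged into Theorem \ref{sum-formula} and the above bounds, so the remaining work is essentially bookkeeping of the set-partition expansion.
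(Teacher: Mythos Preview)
Your proposal is correct and follows essentially the same route as the paper: rewrite each factor via the substitution $\Sigma_{l,2}=\Delta_{l,2}-\tfrac14\Phi_l(0)\log c_{\underline{k},N}$, expand the product, and invoke the three auxiliary lemmas following Lemma~\ref{lemma2} to eliminate all $\Delta$-contributions and extract the pairing structure from the pure $\Sigma_{l,1}$-terms. Your second paragraph slightly duplicates effort, since the third of those lemmas already packages the pairing computation you sketch via Lemmas~\ref{lemma1}--\ref{lemma2}; but this is harmless, and your identification of Conjecture~\ref{conj} as the sole substantive hypothesis is exactly right.
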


We summarize it as
\begin{thm}\label{n-level-spin-even}
Let $\phi(x_1,...,x_n)=\phi_1(x_1)\cdots \phi_n(x_n)$, where each $\phi_i$ is an even Schwartz function and $\hat \phi(u_1,...,u_n)=\hat \phi_1(u_1)\cdots \hat\phi_n(u_n)$. Assume the Fourier transform $\hat{\phi_i}$ of $\phi_i$ is supported in $(-\beta_n,\beta_n)$ for $i=1,\cdots,n$.
Then
$$
\frac 1{d^+_{\underline{k},N}} \sum_{F\in HE^+_{\underline{k},N}} D^{(n)}(\pi_F,\phi,{\rm Spin})=
\int_{\Bbb R^n} \phi(x)W({\rm SO(even)})(x)\, dx+ O\left(\frac {\omega(N)}{\log c_{\underline{k},N}}\right).
$$
\end{thm}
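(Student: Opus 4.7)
The plan is to deduce Theorem \ref{n-level-spin-even} directly from Theorem \ref{+1} by recognising that the combinatorial right-hand side of the latter equals $\int_{\R^n} \phi(x) W(\text{SO(even)})(x)\, dx$ up to an error of size $O(\omega(N)/\log c_{\underline{k},N})$; this is the standard final step in the Katz--Sarnak/Rubinstein framework (cf.\ \cite{Rub, KS1}).

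First I would expand the determinant $W(\text{SO(even)})(x) = \det(K_1(x_j, x_k))_{1 \leq j, k \leq n}$ as a sum over permutations $\sigma \in \mathfrak{S}_n$ and reorganise the resulting integrals according to the cycle type of $\sigma$. A permutation with cycle type $\underline{L} = [L_1, \ldots, L_\nu]$ contributes the sign $(-1)^{n - \nu(\underline{L})}$, and there are $\prod_l (|L_l| - 1)!$ such permutations for each unordered set partition of $\{1, \ldots, n\}$; together with the powers of $2$ produced below, this reproduces the prefactor $(-2)^{n - \nu(\underline{L})} \prod_l (|L_l| - 1)!$ in Theorem \ref{+1}.

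Second I would evaluate each cycle integral via Fourier inversion, using $\frac{\sin \pi t}{\pi t} = \int_{-1/2}^{1/2} e^{2\pi i t u}\, du$ together with Plancherel's theorem. One-cycles ($|L_l| = 1$) contribute $\widehat\Phi_l(0) + \frac{1}{2}\Phi_l(0)$; paired contributions (corresponding to indices $l \in S$) contribute $2 \int_\R |u|\, \widehat\Phi_a(u) \widehat\Phi_b(u)\, du$; and all remaining configurations vanish identically once the iterated convolutions of the supports of the $\widehat\phi_i$ remain inside $(-1, 1)$. This last condition is exactly what pins down the explicit value of $\beta_n$. After discarding the vanishing terms, the surviving configurations are parameterised by an even subset $S \subset \{1, \ldots, \nu(\underline{L})\}$ together with a perfect matching $P_2$ of $S$, reproducing the inner double sum in Theorem \ref{+1}.

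The main obstacle is the careful bookkeeping of signs, combinatorial multiplicities, and powers of $2$ required to ensure that the two sides agree term by term; this is a standard but tedious exercise in random matrix theory executed in \cite{Rub}, which can be transcribed essentially verbatim. Finally, the error term $O(\omega(N)/\log c_{\underline{k},N})$ is inherited from the one-level density computation in \cite[\S 8]{KWY}: each $F \in HE^+_{\underline{k}}(N)$ has $\log c(F)$ differing from the averaged conductor $\log c_{\underline{k},N}$ by $O(\omega(N))$, and this rescaling discrepancy propagates through each argument of $D^{(n)}(\pi_F, \phi, \text{Spin})$ (with $n$ fixed) to produce the stated bound.
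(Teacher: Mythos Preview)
Your proposal is correct and follows essentially the same approach as the paper. The paper obtains Theorem \ref{n-level-spin-even} as a direct restatement of Theorem \ref{+1} (introduced with ``We summarize it as''), leaving the identification of the combinatorial right-hand side with $\int_{\R^n}\phi(x)W(\text{SO(even)})(x)\,dx$ implicit via the reference to Rubinstein \cite{Rub}; you spell out that identification in somewhat more detail, but the argument is the same.
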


\subsection{The case $\epsilon( \pi_F)=-1$ }

If $\epsilon(\pi_F)=-1$, $L(s, \pi_F,\text{Spin})$ always has a family zero at $s=\frac 12$. By Rubinstein \cite{Rub},
the $n$-level density function is

\begin{eqnarray*} \label{ep-1}
&& D^{(n)}(\pi_F,\phi,{\rm Spin}) ={\sum}_{j_1,\cdots,j_n}^*\phi\left(\gamma_{j_1}\frac{\log c_{\underline{k},N}}{2 \pi},\gamma_{j_2}\frac{\log c_{\underline{k},N}}{2 \pi},\dots,\gamma_{j_n}\frac{\log c_{\underline{k},N}}{2 \pi}\right)\\
&=& {\sum}_{j_1 \neq 0,\dots,j_n \neq 0}^*\phi\left(\gamma_{j_1}\frac{\log c_{\underline{k},N}}{2 \pi},\gamma_{j_2}\frac{\log c_{\underline{k},N}}{2 \pi},\dots,\gamma_{j_n}\frac{\log c_{\underline{k},N}}{2 \pi}\right) \nonumber\\
&+& \sum_{\nu=1}^n {\sum}_{j_{\nu}=0,j_k \neq 0, k \neq \nu}^*\phi\left(\gamma_{j_1}\frac{\log c_{\underline{k},N}}{2 \pi},\gamma_{j_2}\frac{\log c_{\underline{k},N}}{2 \pi},\dots,\gamma_{j_{\nu-1}}\frac{\log c_{\underline{k},N}}{2 \pi},0,\gamma_{j_{\nu+1}}\frac{\log c_{\underline{k},N}}{2 \pi},\dots,\gamma_{j_n}\frac{\log c_{\underline{k},N}}{2 \pi}\right).\nonumber
\end{eqnarray*}

By Rubinstein \cite{Rub}, the first term gives rise to
\begin{eqnarray*} \label{-1}
&& \hspace{1cm}  \frac 1{d^-_{\underline{k},N}} \sum_{F \in HE^-_{\underline{k},N}} {\sum}_{j_1\ne 0,\dots,j_n\ne 0}^*
\phi\left(\gamma_{j_1}\frac{\log c_{\underline{k},N}}{2 \pi},\gamma_{j_2}\frac{\log c_{\underline{k},N}}{2 \pi},\dots,\gamma_{j_n}\frac{\log c_{\underline{k},N}}{2 \pi}\right) \\
&=& \sum_{\underline{L}}(-2)^{n-\nu(\underline{L})}\prod_{l=1}^{\nu(\underline{L})}\left( |F_l|-1 \right)! \left( \widehat{\Phi_l}(0) -\frac{2 \Sigma_{l,1}(\Phi_l)}{\log c_{\underline{k},N}} - \frac{2 \Sigma_{l,2}(\Phi_l)}{\log c_{\underline{k},N}} -\Phi_l(0) + O\left( \frac{1}{\log c_{\underline{k},N}}\right)\right) \nonumber\\
&=& \sum_{\underline{L}}(-2)^{n-\nu(\underline{L})}\prod_{l=1}^{\nu(\underline{L})}\left( |F_l|-1 \right)! \left( \left(\widehat{\Phi_l}(0)-\frac 12\Phi_l(0)\right)- \frac {2 \Sigma_{l,1}(\Phi_l)}{\log c_{\underline{k},N}} -\frac {2 \Delta_{l,2}(\Phi_l)}{\log c_{\underline{k},N}} \right)+O\left(\frac 1{\log c_{\underline{k},N}}\right) .\nonumber
\end{eqnarray*}

It is equal to
\begin{eqnarray*}
&& \sum_{\underline{L}}(-2)^{n-\nu(\underline{L})}\prod_{l=1}^{\nu(\underline{L})}\left( |F_l|-1 \right)! \sum_{S\, \text{even}} \left( \prod_{l\in S^c} \left(\widehat{\Phi_l}(0-\frac {1}{2}\Phi_l(0)\right) \right)\\
&& \phantom{xxxxxxxxxxxxxxxxxx} \cdot \left(\sum_{P_2} 2^{\frac {|S|}2} \prod_{i=1}^{\frac {|S|}2} \int_{\Bbb R} |u| \widehat{\Phi_{l_{a_i}}}(u)\widehat{\Phi_{l_{b_i}}}(u)\, du\right) +O\left(\frac 1{\log c_{\underline{k},N}}\right),
\end{eqnarray*}
which equals the $n$-level density of the symplectic type.

We summarize it as
\begin{thm}\label{n-level-spin-odd}
Let the notations be as in Theorem \ref{n-level-spin-even}.
Then
$$
\frac 1{d^-_{\underline{k},N}} \sum_{F\in HE^-_{\underline{k},N}} D^{(n)}(\pi_F,\phi,{\rm Spin})=
\int_{\Bbb R^n} \phi(x)W({\rm SO(odd)})(x)\, dx+ O\left(\frac {\omega(N)}{\log c_{\underline{k},N}}\right).
$$
\end{thm}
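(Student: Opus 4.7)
The plan is to handle the two pieces of $D^{(n)}(\pi_F,\phi,\mathrm{Spin})$ displayed just above the theorem separately. The first piece---the sum with all $j_i \neq 0$---has been reduced in the passage preceding the theorem to an expression that, by the Katz--Sarnak/Rubinstein combinatorics \cite{Rub}, equals $\int_{\mathbb{R}^n}\phi(x)W(\mathrm{Sp})(x)\,dx$ up to the claimed error. The same sequence of reductions used in the $\epsilon=+1$ case (the analogues of Lemmas \ref{lemma1} and \ref{lemma2} together with the three bookkeeping lemmas that follow them) applies verbatim to averages over $HE^-_{\underline{k},N}$, since Conjecture \ref{conj} supplies the required analogue of Theorem \ref{sum-formula} for this subspace and the bounds \eqref{bounds1}, \eqref{bounds2} are intrinsic to each individual $F$. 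It remains to evaluate the second piece, namely $\sum_{\nu=1}^n$ of contributions in which the forced central zero of $L(s,\pi_F,\mathrm{Spin})$ sits in slot $\nu$.

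For each fixed $\nu$, the term with $j_\nu=0$ contributes the factor $\phi_\nu(0)$ from the central zero and leaves an $(n-1)$-fold starred sum over zeros $\gamma_{F,j_k}$, $k\neq \nu$, $j_k \neq 0, \pm j_{k'}$. Running the same Rubinstein-type computation that produced $W(\mathrm{Sp})$ in dimension $n$, but now in dimension $n-1$ and with test function $\prod_{k\neq\nu}\phi_k$, yields
\begin{align*}
& \frac{\phi_\nu(0)}{d^-_{\underline{k},N}}\sum_{F\in HE^-_{\underline{k},N}}{\sum}^{*}\prod_{k\neq\nu}\phi_k\!\Bigl(\gamma_{F,j_k}\tfrac{\log c_{\underline{k},N}}{2\pi}\Bigr) \\
&\qquad = \phi_\nu(0)\int_{\mathbb{R}^{n-1}}\!\Bigl(\prod_{k\neq\nu}\phi_k(x_k)\Bigr)\det(K_{-1}(x_j,x_k))_{j,k\neq\nu}\,dx + O\Bigl(\tfrac{\omega(N)}{\log c_{\underline{k},N}}\Bigr).
\end{align*}
Interpreting $\phi_\nu(0)\prod_{k\neq\nu}\phi_k(x_k)\prod_{k\neq\nu}dx_k$ as integration of $\phi(x)$ against the measure $\delta(x_\nu)\,dx_1\cdots dx_n$ and summing over $\nu$ recognizes the total as $\int_{\mathbb{R}^n}\phi(x)\sum_{\nu=1}^n\delta(x_\nu)\det(K_{-1}(x_j,x_k))_{j,k\neq\nu}\,dx$.

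Adding the two pieces and invoking the definition of $W(\mathrm{SO(odd)})$ from Section \ref{spinor} yields the asserted asymptotic with total error $O(\omega(N)/\log c_{\underline{k},N})$. The main obstacle is combinatorial rather than analytic: one must justify carefully that after extracting the forced zero $j_\nu=0$ no spurious cross-terms between this distinguished zero and the remaining zeros survive---this is ensured by the constraint $j_a\neq \pm j_b$ in the definition of $D^{(n)}$---and that the Rubinstein-style identities reassemble in dimension $n-1$ to give exactly the symplectic kernel $K_{-1}$ without modification. No new analytic input beyond Conjecture \ref{conj} is required, so the argument is essentially a bookkeeping variant of the $\epsilon=+1$ case combined with the elementary identity $W(\mathrm{SO(odd)}) = W(\mathrm{Sp}) + \sum_\nu \delta(x_\nu)\det(K_{-1})_{j,k\neq\nu}$.
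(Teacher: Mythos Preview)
Your proposal is correct and follows essentially the same approach as the paper: split $D^{(n)}$ into the sum with all $j_i\neq 0$ plus the $\sum_{\nu=1}^n$ pieces carrying the forced central zero, identify the former with the symplectic density via the Rubinstein combinatorics (exactly as in the $\epsilon=+1$ computation but with $\widehat{\Phi}_l(0)-\tfrac12\Phi_l(0)$ in place of $\widehat{\Phi}_l(0)+\tfrac12\Phi_l(0)$), and recognize the latter as the $\delta(x_\nu)\det(K_{-1})_{j,k\neq\nu}$ contributions by running the same argument in dimension $n-1$. The paper is terser on the second piece, simply stating the theorem after computing the first, but your explicit treatment of the $j_\nu=0$ terms and the final identity $W(\mathrm{SO(odd)})=W(\mathrm{Sp})+\sum_\nu\delta(x_\nu)\det(K_{-1})_{j,k\neq\nu}$ is exactly what is intended.
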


\section{$n$-level density of degree 5 standard $L$-functions}\label{Standard}

Let $L(s,\pi_F, \text{St})$ be the degree 5 standard $L$-function. Let
$$L(s,\pi_F, \text{St})=\sum_{n=1}^\infty \mu_F(n) n^{-s}.
$$
It satisfies the functional equation: Let $\Lambda(s,\pi_F, \text{St})=q(F,\text{St})^{\frac s2} \Gamma_\Bbb R(s) \Gamma_\Bbb C(s+k_1-1)\Gamma_\Bbb C(s+k_2-2)L(s,\pi_F,\text{St})$. Then
$$\Lambda(s,\pi_F,\text{St})=\epsilon(\pi_F,\text{St})\Lambda(1-s,\pi_F,\text{St}),
$$
where $\epsilon(\pi_F,\text{St})\in\{\pm 1\}$ and $N\le q(F,{\rm St})\le N^{28}$. Lapid \cite{La} showed

\begin{prop} Let $\pi_F$ be as above. Then $\epsilon(\pi_F,{\rm St})=1$.
\end{prop}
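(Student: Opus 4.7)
The plan is to reduce the computation of $\epsilon(\pi_F,{\rm St})$ to a statement about the standard functorial lift of $\pi_F$ to $GL_5$, where the structure of orthogonal self-dual representations can be exploited. First, since the standard $L$-function and its $\epsilon$-factor depend only on the Langlands parameter, I would replace $\pi_F$ by the globally generic tempered representation $\pi^g_F$ in the same $L$-packet produced in Proposition \ref{ll}; this substitution leaves both $L(s,\pi_F,{\rm St})$ and $\epsilon(\pi_F,{\rm St})$ unchanged. The generic representation $\pi^g_F$ of $GSp_4(\A_\Q)$ then admits a functorial transfer $\Pi = {\rm St}(\pi^g_F)$ to an isobaric automorphic representation of $GL_5(\A_\Q)$, via the Ginzburg--Rallis--Soudry descent or equivalently the Cogdell--Kim--Piatetski-Shapiro--Shahidi converse theorem combined with Langlands--Shahidi, so that
$$L(s,\pi_F,{\rm St}) = L(s,\Pi), \qquad \epsilon(\pi_F,{\rm St}) = \epsilon\bigl(\tfrac{1}{2},\Pi\bigr).$$

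Next I would verify that $\Pi$ is self-dual of orthogonal type with trivial central character. The standard representation of the dual group factors through $\widehat{PGSp_4} \cong SO_5(\C) \hookrightarrow GL_5(\C)$, and the image lying in $SO_5$ immediately forces $\Pi \cong \widetilde{\Pi}$, $\omega_\Pi = \mathbf{1}$, and a pole of $L(s,\Pi,{\rm Sym}^2)$ at $s = 1$. In particular, the local Langlands parameter of $\Pi$ at every place factors through $SO_5(\C)$, and the archimedean $\Gamma$-factors produced by ${\rm St}$ match those in the statement of the proposition.

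The essential remaining step is to invoke Lapid's theorem from \cite{La}, which asserts that for any self-dual isobaric automorphic representation of $GL_n(\A_\Q)$ of orthogonal type with trivial central character, the central root number equals $+1$. Applying it to our $\Pi$ on $GL_5$ yields $\epsilon(\pi_F,{\rm St}) = \epsilon(\tfrac{1}{2},\Pi) = 1$, which is what we want.

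The main obstacle is the last step. Self-duality alone only gives $\epsilon(\tfrac{1}{2},\Pi) \in \{\pm 1\}$; pinning the sign down to $+1$ requires a genuinely global input, combining the functional equation of $L(s,\Pi)$ with the pole of $L(s,\Pi,{\rm Sym}^2)$ at $s=1$ and a careful case-by-case analysis of local Langlands--Shahidi $\epsilon$-factors attached to orthogonal parameters, with Whittaker normalizations tracked consistently. For our purposes this is used as a black box from \cite{La}; reproducing it would require reconstructing Lapid's sign analysis, and this is the one place where a shorter, self-contained argument does not seem available.
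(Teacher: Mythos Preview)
Your proof is correct and follows the same route as the paper: reduce to the globally generic member of the $L$-packet, then invoke Lapid's theorem \cite{La} on root numbers of orthogonal-type representations of $GL_n$. You simply make explicit the lift to $GL_5$ and the orthogonality verification that the paper compresses into the single remark that Lapid's result applies directly to globally generic cusp forms.
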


In fact, Lapid proved it only for globally generic cusp forms. However, holomorphic cusp forms are always in the same $L$-packet with a globally generic cusp form. Hence the result follows.
Because of the above proposition, we expect that the symmetry type of $L(s,\pi_F,\text{St})$ is ${\rm Sp}$.
However, we need the following conjecture. We showed it in \cite[Proposition 9.5]{KWY} when $m$ is of the form
$m=p_1^{a_1}\cdots p_r^{a_r}$, where $p_i$'s are distinct primes, and $a_i=1,2$ for each $i$.

\begin{Conj}\label{stan}
Put $\underline{k}=(k_1,k_2),\ k_1\ge k_2\ge 3$.
\begin{enumerate}
\item (level-aspect) Fix $k_1,k_2$. Let $m=\prod_{p|m} p^{v_p(m)}$. Then as $N\to\infty$,
$$\frac{1}{d_{\underline{k},N}}\sum_{F\in HE_{\underline{k}}(N)} \mu_F(m) =\prod_{p|m} (\delta_{\square}+ p^{-1}h(p^{-1}))+ O(N^{-2}m^c),
$$
where $\delta_{\square}=\begin{cases} 1, &\text{if $m$ is a square}\\0, &\text{otherwise}\end{cases}$,
\newline and $h$ is a polynomial with integer coefficients and $c>0$ is a constant.
\item (weight-aspect) Fix $N$. Then as $k_1+k_2\to\infty$,
$$\frac{1}{d_{\underline{k},N}}\sum_{F\in HE_{\underline{k}}(N)} \mu_F(m) =\prod_{p|m} (\delta_\square + p^{-1}h(p^{-1}))+
O\left(\frac {m^c}{(k_1-1)(k_2-2)}\right)+ O\left(\frac {m^d}{(k_1-k_2+1)(k_1+k_2-3)}\right).
$$
\end{enumerate}
\end{Conj}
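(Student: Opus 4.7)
The plan is to extend \cite[Proposition 9.5]{KWY} from the case $v_p(m)\in\{1,2\}$ to arbitrary prime-power exponents by combining multiplicativity of $\mu_F$ (valid since $F$ is an eigenform outside $N$) with the multi-prime version of the trace-formula equidistribution result Theorem 8.1 of \cite{KWY} --- the same input that drives Theorem \ref{Sato-Tate}. Since $\mu_F$ is multiplicative on integers coprime to $N$, for $m=\prod_i p_i^{n_i}$ with $(m,N)=1$ one has $\mu_F(m)=\prod_i \mu_F(p_i^{n_i})$; the claimed main term is multiplicative by design, so I only need to track the local expansion of $\mu_F(p^n)$ in the spherical Hecke algebra at $p$ and then apply the multi-prime sum formula.

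At a prime $p\nmid N$, the degree-$5$ standard local factor reads
$$L_p(s,\pi_F,\mathrm{St})^{-1}=(1-p^{-s})\prod_{i=1}^{2}(1-\alpha_i p^{-s})(1-\alpha_i^{-1}p^{-s}),$$
with $\alpha_i=\alpha_i(F,p)$ the normalized Satake parameters, so $\mu_F(p^n)=h_n(1,\alpha_1,\alpha_1^{-1},\alpha_2,\alpha_2^{-1})$, the $n$-th complete homogeneous symmetric polynomial. Being Weyl-invariant in $(\alpha_1,\alpha_2)$, Newton's identities rewrite it as an integer polynomial in the elementary symmetric polynomials of the spinor parameters, which are --- up to powers of $p$ --- the eigenvalues of the standard Hecke operators $T(p^j)$. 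This yields a finite expansion
$$\mu_F(p^n)=c_{n,0}(p)+\sum_{j=1}^{n}c_{n,j}(p)\,\lambda_F(p^j),\qquad c_{n,j}(p)\in\mathbb{Z}[p^{-1}],$$
whose constant term equals $\delta_{2\mid n}+p^{-1}h_n(p^{-1})$ for an explicit integer polynomial $h_n$; the extra factor $(1-p^{-s})^{-1}$ in the standard factor is precisely what distinguishes this from the spinor case treated in Theorem \ref{sum-formula}. Multiplying over $p\mid m$ and averaging, Theorem 8.1 of \cite{KWY} evaluates the multi-prime sum $d_{\underline{k},N}^{-1}\sum_F\prod_i \lambda_F(p_i^{j_i})$: only the tuple $(j_1,\ldots,j_r)=(0,\ldots,0)$ survives in the main term, contributing exactly $\prod_{p\mid m}(\delta_{\square}+p^{-1}h(p^{-1}))$, while every other tuple is absorbed into the error.

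The main obstacle is uniform polynomial control in $n$ of the non-identity geometric side of the trace formula when the test function at $p$ is $T(p^n)$ with $n$ large. In \cite[Proposition 9.5]{KWY} only $n\in\{1,2\}$ is treated, for which the support of the test function is a fixed compact set and the Shin--Templier bounds \cite{ST1} apply directly. For general $n$ the support of $T(p^n)$ grows, and one must show that the non-identity orbital integrals appearing on the geometric side of the adelic trace formula are bounded by $p^{cn}$ with a constant $c>0$ independent of $n$; this should be a routine but nontrivial extension of the analysis in \cite{ST1} via the Plancherel formula and standard estimates for weighted orbital integrals, together with the observation that the denominators of $c_{n,j}(p)$ grow at most polynomially in $p$. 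Once such a bound is established, the multi-prime product produces the claimed $O(N^{-2}m^c)$ error in the level aspect, and the corresponding weight-aspect errors follow from the weight-aspect version of Theorem 8.1 of \cite{KWY}. Pinning down the precise polynomial $h$ in the main term --- arising from the trivial Satake parameter in the standard representation --- is, I expect, the most delicate combinatorial ingredient.
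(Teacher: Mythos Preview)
The statement you are attempting to prove is labeled \emph{Conjecture} in the paper, and the paper does not prove it: immediately before the statement the authors write that they ``need the following conjecture'' and that only the special case where every $v_p(m)\in\{1,2\}$ is established (in \cite[Proposition~9.5]{KWY}). There is therefore no proof in the paper to compare your proposal against; Theorem~\ref{n-level-sp} is explicitly conditional on this conjecture.

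As for the proposal itself, two comments. First, your expansion
\[
\mu_F(p^n)=c_{n,0}(p)+\sum_{j=1}^{n}c_{n,j}(p)\,\lambda_F(p^j)
\]
as a \emph{linear} combination of the $\lambda_F(p^j)$ is not correct for general $n$. The unramified Hecke algebra of $GSp_4$ at $p$ is a polynomial ring in two generators (together with the invertible central element), and the operators $T(p^j)$ do not form a linear basis of it; the Weyl-invariant polynomial underlying $\mu_F(p^n)$ corresponds via Satake to a $\Z[p^{-1}]$-combination of the individual double-coset operators $T_m$, not of the sums $T(p^j)=\sum_{m}T_m$. So the expansion must be written in terms of the $T'_m$, after which Theorem~8.1 of \cite{KWY} applies term-by-term.

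Second, the ``main obstacle'' you single out---uniform polynomial control of the geometric side in the similitude $p^\kappa$---is in fact \emph{already} provided by Theorem~8.1 of \cite{KWY}; this is exactly what makes the spinor analogue (Theorem~\ref{sum-formula} here) go through for arbitrary $m$. What prevented the authors from upgrading the standard case from conjecture to theorem is not the error term but the main-term combinatorics: one must identify, for each $n$, which central double cosets $T'_{m}$ with $m\in Z_G(\Q)$ occur in the Satake expansion of $\mu_F(p^n)$, with which coefficients, and check that the resulting Plancherel integral has the asserted shape $\delta_\square+p^{-1}h(p^{-1})$. For the spinor $L$-function this is clean because of the generating-series identity $Q_{F,p}(t)^{-1}=(1-p^{-1}t^2)^{-1}\sum_n\lambda_F'(p^n)t^n$; no comparable identity is available for the degree-5 standard factor, and the authors did not carry out the resulting computation for $n\ge 3$.
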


Let
$$-\frac {L'}L(s,\pi_F,{\rm St})=\sum_{n=1}^\infty \Lambda(n)b_F(n) n^{-s}.
$$

We use the fact that
$b_F(p)^2=1+p^{-1}\times(\text{polynomials in $p^{-1}$)+sum of eigenvalues of Hecke operators}$, and $b_F(p^2)=1+p^{-1}\times(\text{polynomials in $p^{-1}$)+sum of eigenvalues of Hecke operators}$.

Let $\phi$ be a Schwartz function which is even and whose Fourier transform has a compact support.
Define
\begin{equation*}
D(\pi_F,\phi,\text{St}) = \sum_{\gamma_{F}}\phi\left( \frac{\gamma_{F}}{2\pi} \log c_{\underline{k},st,N}\right),
\end{equation*}
where $\log c_{\underline{k},st,N}=\frac 1{d_{\underline{k},N}} \sum_{F\in HE_{\underline{k}}(N)} \log c(F,\text{St})$, and
$c(F,\text{St})=(k_1 k_2)^2 q(F,St)$ is the analytic conductor.

We showed in \cite{KWY} that

$$\frac 1{d_{\underline{k},N}} \sum_{F\in HE_{\underline{k}}(N)}  D(\pi_F,\text{St},\phi)=\widehat{\phi}(0)-\frac 12 \phi(0)
+O\left( \frac{1}{\log c_{\underline{k},st,N}} \right)
=\int_\Bbb R \phi(x)W(\text{Sp})(x)\, dx+ +O\left( \frac{\omega(N)}{\log c_{\underline{k},st,N}} \right).
$$

Let $\phi(x_1,...,x_n)=\phi_1(x_1)\cdots \phi_n(x_n)$, where each $\phi_i$ is an even Schwartz function and $\hat \phi(u_1,...,u_n)=\hat \phi_1(u_1)\cdots \hat\phi_n(u_n)$.
For a fixed $n>0$, assume the Fourier transform $\hat{\phi_i}$ of $\phi_i$ is supported in $(-\beta_n,\beta_n)$ for $i=1,\dots,n$.
($\beta_n < 1$ can be explicitly determined.)
The $n$-level density function is
\begin{equation*}\label{n-level-st}
D^{(n)}(\pi_F, \phi,\text{St})
={\sum}_{j_1,\cdots,j_n}^*\phi\left(\gamma_{j_1}\frac{\log c_{\underline{k},st,N}}{2 \pi},\gamma_{j_2}\frac{\log c_{\underline{k},st,N}}{2 \pi},\dots,\gamma_{j_n}\frac{\log c_{\underline{k},st,N}}{2 \pi}\right)
\end{equation*}
where $\sum_{j_1,...,j_n}^*$ is over $j_i=\pm 1,\pm 2,...$ with $j_{a}\ne \pm j_{b}$ for $a\ne b$.
Then as in the degree 4 spinor $L$-functions, we can show Theorem \ref{n-level-sp}.
(We can prove the analogues of (\ref{bounds1}) and (\ref{bounds2}) for $b_F(p^i)$ from \cite[Appendix]{KWY}:
Let $\Pi$ be the cuspidal representation of $GL_5/\Q$ such that $L(s,\Pi)=L(s,\pi_F,{\rm St})$. Then
(\ref{bounds1}) follows from the fact that $L(s, \Pi\times\Pi)$ converges absolutely for $Re(s)>1$ and has a simple pole at $s=1$;
(\ref{bounds2}) follows from the fact that $|b_F(p^2)|\ll |b_F(p)|^2$ for any $p$, and the Ramanujan bound
$|b_F(p^2)|\leq 5 p^{1-\frac 2{26}}$.)

\section{Paramodular forms}

\def\para{{\mathrm{para}}}

In this section, we fix a square free positive integer $N$.
We deal with a compact subgroup $K_p^\para(N)$ of level $N$ in $G(\Q_p)$, which is defined by
\[
K_p^\para(N)=xM_4(\Z_p)x^{-1}\cap G(\Q_p),\quad x=\diag(1,1,N,1),
\]
where $G$ means $GSp_4$ as in \cite[Section 2]{KWY}.
We have an open compact subgroup $K^\para(N)=\prod_p K_p^\para(N)$ in $G(\A_\fin)$.
Furthermore, an arithmetic subgroup $\Gamma^\para(N)$ of $G(\Q)$ is given by $\Gamma^\para(N)=G(\Q)\cap (G(\R)K^\para(N))$, called the paramodular subgroup.
An element $w_{N,p}$ in $G(\Q_p)$ is given by
\[
w_{N,p}=\begin{pmatrix}0&0&0&1 \\ 0&0&-1&0 \\ 0&-N&0&0 \\ N&0&0&0  \end{pmatrix} \in  G(\Q_p).
\]
Then, the Atkin-Lehner involution on $G(\Q_p)$ is provided by the double coset
\[
K^\para_p(N)w_{N,p}K^\para_p(N)=w_{N,p}K^\para_p(N)=K^\para_p(N)w_{N,p}.
\]
Let $\underline{k}=(k_1,k_2)$, $k_1\geq k_2\geq 4$, and $S_{\underline{k}}(\Gamma^\para(N))$ denote the space of paramodular forms of weight $\underline{k}$ with the trivial central character.
Set $d_{\underline{k}}^\para=\dim S_{\underline{k}}(\Gamma^\para(N))$.
Then by \cite{IK},
\begin{eqnarray*}
&& d_{\underline{k}}^\para=2^{-7}3^{-3}5^{-1} (k_1-1)(k_2-2)(k_1-k_2+1)(k_1+k_2-3) \prod_{p|N} (p^2+1)\\
&& \phantom{xxxxxxxxxxxxxxxxx} + O(N(k_1-k_2+1)(k_1+k_2-3))+O(N(k_1-1)(k_2-2)).
\end{eqnarray*}
Here note that $\prod_{p|N} (p^2+1)=c_N N^2$ for some constant $1<c_N<5$.

We obtain the Hecke operator $T_m'$ as in \cite[Section 8]{KWY}.
Define, for $M|N$,
the Atkin-Lehner involution $w_{N,M}$ on $S_{\underline{k}}(\Gamma^\para(N))$, where $w_{N,M}$ is induced from the coset $\prod_{p|M}K^\para_p(N)w_{N,p}$ in $\prod_{p|M}G(\Q_p)$.

\begin{thm}\label{estimate}
Suppose $k_1\geq k_2\geq 4$.
There exist absolute constants $a$ and $b$ such that for each prime $p\nmid N$, square-free natural number $M$ dividing $N$, and $m=\diag(p^{a_1},p^{a_2},p^{-a_1+\kappa},p^{-a_2+\kappa})$, $a_1,a_2,\kappa\in \Z$ satisfying $0\le a_2\le a_1\le \kappa$, we have
\[
\frac{1}{d_{\underline{k},N}^\para}{\rm tr}(T'_m w_{N,M}|S_{\underline{k}}(\Gamma^\para(N))) = B_1+B_2+ O(\frac{p^{a\kappa+b}}{(k_1-k_2+1)(k_1-1)(k_2-2)})\ \  (k_1+k_2\to \infty),
\]
\[
B_1=O(\frac{p^{-\frac{\kappa}{2}}}{(k_1-1)(k_2-2)}),\ B_2=O(\frac{p^{-\frac{\kappa}{2}}}{(k_1-k_2+1)(k_1+k_2-3)})
\]
if $m\not\in Z_G(\Q)$ or $M\neq 1$.
\end{thm}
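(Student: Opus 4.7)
The plan is to adapt the trace formula approach from Section 7--8 of \cite{KWY}, replacing the principal congruence test function with a paramodular test function twisted by the Atkin--Lehner involution. Concretely, take a test function $f = f_\infty \otimes f_p \otimes \prod_{\ell \mid N} f_\ell \otimes \prod_{\ell \nmid Np} \mathbf{1}_{G(\Z_\ell)}$, where $f_\infty$ is a pseudo-coefficient (or Euler--Poincar\'e function) for the holomorphic discrete series $D^{{\rm large}}_{l_1,-l_2}$ with $(l_1,l_2)=(k_1-1,k_2-2)$, $f_p$ is the characteristic function of $K_p^\para(N)\,m\,K_p^\para(N)$ divided by its volume (realizing $T'_m$), $f_\ell = \mathbf{1}_{K^\para_\ell(N)\,w_{N,\ell}}$ for $\ell \mid M$, and $f_\ell = \mathbf{1}_{K^\para_\ell(N)}$ for $\ell \mid N/M$. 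Then apply Arthur's invariant trace formula on $GSp_4/\Q$.

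Next I would analyze the spectral side. The contribution from the holomorphic discrete packet $D^{{\rm large}}_{l_1,-l_2}$ at infinity isolates automorphic representations whose finite part has fixed vectors under $K^\para(N)$, and by the matrix coefficient/pseudo-coefficient calculation (as in Section 7 of \cite{KWY}) the discrete spectrum contribution equals $\tr(T'_m w_{N,M} \mid S_{\underline{k}}(\Gamma^\para(N)))$ up to a scalar involving $(k_1-k_2+1)(k_1-1)(k_2-2)$. The continuous spectrum and residual contributions are bounded using the analysis of Eisenstein series exactly as in Section 7 of \cite{KWY}; these are what produce the second error in $B_1,B_2$ with denominator $(k_1-k_2+1)(k_1+k_2-3)$, while the holomorphic-discrete weight factors produce the $(k_1-1)(k_2-2)$ denominator in $B_1$ and the final error.

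On the geometric side, the central contribution is nonzero only when $m \in Z_G(\Q)$ and every $w_{N,\ell}$ contributes trivially, which forces $M=1$; under the hypothesis $m\notin Z_G(\Q)$ or $M\neq 1$, the central term vanishes, so no leading term proportional to $d_{\underline{k},N}^\para$ appears. The remaining elliptic, hyperbolic, and unipotent orbital integrals are estimated by combining: (i) the archimedean orbital integral bounds for the $D^{{\rm large}}$ pseudo-coefficient (polynomial decay in the weight giving $B_1$, $B_2$), (ii) standard $p$-adic bounds for the Hecke function at $p$ (producing $p^{a\kappa+b}$), and (iii) the volume/level estimates for paramodular conjugacy-class intersections at primes $\ell \mid N$, which are $O(N^{2-\epsilon})$ thanks to $N$ being square-free and $K^\para_\ell(N)$ being paramodular (rather than principal). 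Dividing by $d_{\underline{k},N}^\para\sim c(k_1-1)(k_2-2)(k_1-k_2+1)(k_1+k_2-3)\,c_N N^2$ then gives the stated shape.

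The main obstacle will be the local orbital integrals at the primes $\ell \mid M$ twisted by the Atkin--Lehner element $w_{N,\ell}$: one must show these are uniformly bounded and that only the central conjugacy classes could contribute a term of size $d_{\underline{k},N}^\para$, but any such contribution requires $w_{N,\ell}$ to stabilize a central element in $K^\para_\ell(N)$, which (since $w_{N,\ell}^2 \in N\cdot Z$) fails for $M\neq 1$. Establishing this vanishing cleanly, together with verifying that the paramodular analogues of the orbital integral bounds in Sections 6--7 of \cite{KWY} retain the same exponents, is the chief technical point; once granted, the archimedean analysis and the reduction to the spectral/geometric identity proceed in parallel with the principal-congruence case.
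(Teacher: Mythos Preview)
Your overall strategy matches the paper's: choose the test function $f=f_\xi h$ with the archimedean pseudo-coefficient $f_\xi$ for the holomorphic discrete series, the Hecke function at $p$, and at primes $\ell\mid N$ take $\mathrm{char}_{K_\ell^\para(N)}$ or $\mathrm{char}_{w_{N,\ell}K_\ell^\para(N)}$ according to whether $\ell\nmid M$ or $\ell\mid M$; then plug into Arthur's invariant trace formula and bound the non-central terms following the argument of \cite[Proposition~6.3]{KWY}.

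There is, however, a misattribution in your outline. You say the continuous and residual spectrum produce the $B_1,B_2$ terms via Eisenstein-series analysis. In fact, for $k_2\ge 4$ the pseudo-coefficient $f_\xi$ is cuspidal and its trace against any non-discrete (tempered) representation vanishes, so there is no continuous-spectrum contribution to handle. In the paper the entire analysis takes place on the \emph{geometric} side: the terms you call $B_1,B_2$ arise from the contributions $I_2$ and $I_3$ (which carry the archimedean weight factors $(k_1-k_2+1)(k_1+k_2-3)$ and $(k_1-1)(k_2-2)$ respectively), while the remaining error comes from $I_4,I_5,I_6$. The crucial technical input you do not name is the uniform bound on $p$-adic orbital integrals from \cite[Proposition~8.7]{ST}, which is what converts the non-central geometric terms into $O(p^{a\kappa+b})$. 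Your discussion of the Atkin--Lehner orbital integrals and the vanishing of the central term when $M\neq 1$ is correct and is exactly why no main term appears; the paper leaves this implicit by simply not listing an $I_1$ contribution.
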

\begin{proof}
We recall the notations as in Sections 5 and 6 of \cite{KWY}.
For the trace formula, we choose a test function $f=f_\xi h$ as
\[
h=f_{S',\alpha} \, \tilde f \, \Big( \bigotimes_{p\in S\setminus S'\sqcup \{v\mid N\inf\} }  \mathrm{char}_{K_p} \Big)
\]
where $\tilde f$ denotes $\big(\otimes_{p|N,\, p\nmid M}\mathrm{char}_{K_p^\para(N)} \big) \bigotimes \big(\otimes_{p| M}\mathrm{char}_{w_{N,p}K_p^\para(N)} \big)$.
By the same argument as the proof of \cite[Proposition 6.3]{KWY}, one can show that there exist positive constants $a'$ and $b'$ such that
\[
I_2(f)\times \vol(K^\para(N))^{-1}\times |\nu(\alpha)|^{-\frac{k_1+k_2}{2}}_{S'} =O(p_{S'}^\kappa(k_1-k_2+1)(k_1+k_2-3))
\]
\[
I_3(f)\times \vol(K^\para(N))^{-1}\times |\nu(\alpha)|^{-\frac{k_1+k_2}{2}}_{S'} =O(p_{S'}^\kappa (k_1-1)(k_2-2)),
\]
\[
\{I_4(f)+I_6(f)\}\times \vol(K^\para(N))^{-1}\times |\nu(\alpha)|^{-\frac{k_1+k_2}{2}}_{S'} =O(p_{S'}^{a'\kappa +b'}(k_1+k_2-3)),
\]
\[
I_5(f)\times \vol(K^\para(N))^{-1}\times |\nu(\alpha)|^{-\frac{k_1+k_2}{2}}_{S'} =O(p_{S'}^{a'\kappa +b'}(k_1+k_2-3))
\]
for any $(k_1,k_2)$, $\kappa\geq 1$, $S'$, and $f_{S',\alpha}$, which satisfy the conditions $k_1\geq k_2\geq 3$, $f_{S',\alpha}\in H^\mathrm{ur}(G(\Q_{S'}))^\kappa$, and $N$ is prime to $\prod_{p\in S'}p$.
Notice that an important result \cite[Proposition 8.7]{ST} was used for the proof.
Thus, this theorem is derived from the above estimates.
\end{proof}

Let $S^{\text{non-C}}_{\underline{k}}(\Gamma^\para(N))$ denote the space of paramodular forms whose associated representations are not related to the Saito-Kurokawa representations.
We write $S^{\mathrm{CAP}}_{\underline{k}}(\Gamma^\para(N))$ for the subspace consisting of Saito-Kurokawa liftings in $S_{\underline{k}}(\Gamma^\para(N))$ (i.e., it is called the Maass space).
Hence, $S^{\mathrm{CAP}}_{\underline{k}}(\Gamma^\para(N))$ is the orthogonal complement of $S^{\text{non-C}}_{\underline{k}}(\Gamma^\para(N))$ in $S_{\underline{k}}(\Gamma^\para(N))$ by the Petersson inner product.
Namely, we have an isomorphism
\[
S^{\text{non-C}}_{\underline{k}}(\Gamma^\para(N))\cong\bigoplus_{\pi=\pi_\inf\otimes \pi_\fin} N_\pi(K^\para(N))
\]
where $\pi$ moves over automorphic representations of $G(\A)$ such that $\pi$ has the trivial central character, $\pi$ is not a Saito-Kurokawa representation, $\pi_\inf$ is isomorphic to the holomorphic discrete series of $G(\R)$ with the Harish-Chandra parameter $(k_1-1,k_2-2)$, and the subspace $N_\pi(K^\para(N))$ of $K^\para(N)$-fixed vectors in $\pi_\fin$ is not trivial.
By \cite[Theorem 3.3]{Wei2} or \cite[a comment for $n=2$ after Theorem A]{KrSh}, for each above $\pi_\fin=\otimes_{v<\inf}\pi_v$, if $\pi_v$ is spherical, then $\pi_v$ satisfies the Ramanujan conjecture.
Hence, all spherical representations $\pi_v$ belong to the class I in \cite[Table A.13 in p.293]{RS} (see also \cite[Table 3]{SchIwa}).
Hence, by \cite{SchIwa,RS}, one has an isomorphism
\[
N_\pi(K^\para(N))\cong \bigotimes_{v<\inf}N_{\pi_v}(K^\para_v(N))
\]
where $N_{\pi_v}(K^\para_p(N))$ denotes the subspace of $K^\para_v(N)$-fixed vectors in $\pi_v$, and for each prime $p|N$,  $\pi_p$ satisfies (i) or (ii);
\begin{itemize}
\item[(i)] $\pi_p$ is spherical and $\dim N_{\pi_p}(K_p^\para(N))=2$ ,
\item[(ii)] $\pi_p$ is non-spherical and $\dim N_{\pi_p}(K_p^\para(N))=1$.
\end{itemize}
As for the case (i), the trace of the Atkin-Lehner involution on $N_{\pi_p}(K_p^\para(N))$ is zero.
When $\pi_p$ satisfies (ii), the eigenvalue of the Atkin-Lehner involution means the $\varepsilon$-factor of its local spinor L-factor (see \cite[Theorem 5.7.3 in p.185]{RS} or \cite[Proposition 1.3.1]{SchIwa}).

By \cite{SchSK}, it is obvious that $\frac{1}{d_{\underline{k},N}^\para}{\rm tr}(T'_m w_{N,M}|S^{\text{CAP}}_{\underline{k}}(\Gamma^\para(N)))$ is negligible.
Furthermore, let $S^{\text{non-C,new}}_{\underline{k}}(\Gamma^\para(N))$ denote the subspace of newforms in $S^{\text{non-C}}_{\underline{k}}(\Gamma^\para(N))$.
Here, newform means that its associated automorphic representation $\pi$ has no $ K^\para(M)$-fixed vectors for any natural number $M$ such that $M|N$ and $M<N$.
Namely,
\[
S^{\text{non-C,new}}_{\underline{k}}(\Gamma^\para(N)) \cong\bigoplus_{\pi=\otimes_v \pi_v} N_\pi(K^\para(N))
\]
where $\pi$ moves over all automorphic representations satisfying the same conditions as above and $\pi_p$ is of the case (ii) for each $p|N$.
Therefore, one has
\[
{\rm tr}(T'_m|S^{\text{non-C,new}}_{\underline{k}}(\Gamma^\para(N)))=\sum_{M|N}  (-2)^{\omega(M)}  {\rm tr}(T'_m|S^{\text{non-C}}_{\underline{k}}(\Gamma^\para(N/M)))
\]
where $\omega(M)$ is the number of distinct prime factors of $M$.

Let $S^{\text{non-C,new},+}_{\underline{k}}(\Gamma^\para(N))$ (resp. $S^{\text{non-C,new},-}_{\underline{k}}(\Gamma^\para(N))$) denote the subspace of newforms whose $\varepsilon$-factors are $1$ (resp. $-1$).
By the above mentioned arguments, one gets
\begin{multline*}
{\rm tr}(T'_m|S^{\mathrm{non-C,new},\pm}_{\underline{k}}(\Gamma^\para(N)))\\
=\frac{1}{2} \Big[ {\rm tr}(T'_m|S^{\text{non-C,new}}_{\underline{k}}(\Gamma^\para(N))) \pm (-1)^{k_2}{\rm tr}(T'_mw_{N,N}|S^{\text{non-C}}_{\underline{k}}(\Gamma^\para(N))) \Big],
\end{multline*}
Therefore, if we set
\[
d_{\underline{k}}^{\para,\mathrm{new},\pm}=\dim S^{\text{non-C,new},\pm}_{\underline{k}}(\Gamma^\para(N))),
\]
then by \cite{IK} and Theorem \ref{estimate},
\begin{eqnarray*}
&& d_{\underline{k}}^{\para,\mathrm{new},\pm}=2^{-8}3^{-3}5^{-1} (k_1-1)(k_2-2)(k_1-k_2+1)(k_1+k_2-3) \prod_{p|N} (p^2-1)\\
&& \phantom{xxxxxxxxxxxxxxxxx} + O(N(k_1-k_2+1)(k_1+k_2-3))+O(N(k_1-1)(k_2-2)).
\end{eqnarray*}
Furthermore, by Theorem \ref{estimate}, there exist absolute constants $a$ and $b$ such that for each prime $p\nmid N$ and $m=\diag(p^{a_1},p^{a_2},p^{-a_1+\kappa},p^{-a_2+\kappa}),\ a_1,a_2,\kappa\in \Z$ satisfying $0\le a_2\le a_1\le \kappa$ and $m\not\in Z_G(\Q)$,
\[
\frac{1}{d_{\underline{k},N}^{\para,\mathrm{new},\pm}}{\rm tr}(T'_m|S^{\mathrm{non-C,new},\pm}_{\underline{k}}(\Gamma^\para(N))) = B_1+B_2+ O(\frac{p^{a\kappa+b}}{(k_1-k_2+1)(k_1-1)(k_2-2)}).
\]
Hence we have proved Conjecture \ref{conj} for paramodular newforms.
Therefore, we have proved $n$-level density for spinor $L$-functions of paramodular newforms in weight aspect
(analogues of Theorem \ref{n-level-spin-even} and Theorem \ref{n-level-spin-odd} for paramodular forms).

In a similar way, we can show a simultaneous vertical Sato-Tate theorem for paramodular forms (analogue of Theorem \ref{Sato-Tate}).

\end{document}